\newtheorem{thm}{Theorem}[section]
\newtheorem{lemma}[thm]{Lemma}
\theoremstyle{definition}
\newtheorem{defn}[thm]{Definition}
\newtheorem{example}[thm]{Example}
\newtheorem{obs}[thm]{Observation}
\newcommand{\bi}{\begin{itemize}}
\newcommand{\ei}{\end{itemize}}
\newcommand{\be}{\begin{enumerate}}
\newcommand{\ee}{\end{enumerate}}
\newcommand{\bc}{\begin{center}}
\newcommand{\ec}{\end{center}}
\newcommand{\bt}{\begin{tabular}}
\newcommand{\et}{\end{tabular}}
\newcommand{\ba}{\begin{array}}
\newcommand{\ea}{\end{array}}
\newcommand{\Z}{\mathbb Z}
\newcommand{\PP}{{\mathcal P}}
\begin{document}

\title{Computing word length in alternate presentations of Thompson's group $F$}

\author[Matthew Horak]{Matthew Horak}
\address{Department of Mathematics, Statistics and Computer Science, University of Wisconsin-Stout,  Menomonie, WI 54751}
\email{horakm@uwstout.edu}

\author[Melanie Stein] {Melanie Stein}
\address{Department of Mathematics, Trinity College, Hartford, CT 06106}
\email{melanie.stein@trincoll.edu}

\author[Jennifer Taback] {Jennifer Taback}
\address{Department of Mathematics, Bowdoin College, Brunswick, ME 04011}
\email{jtaback@bowdoin.edu}

\thanks{The third author acknowledges partial support from
National Science Foundation grant DMS-0437481.}

\date{\today}

\begin{abstract}
We introduce a new method for computing the word length of an
element of Thompson's group $F$ with respect to a ``consecutive"
generating set of the form $X_n = \{x_0,x_1, \cdots ,x_n\}$, which
is a subset of the standard infinite generating set for $F$. We use
this method to show that $(F,X_n)$ is not almost convex, and has
pockets of increasing, though bounded, depth dependent on $n$.
\end{abstract}

\maketitle

\section{Introduction}

Many questions in geometric group theory investigate whether a
particular group has a given property.  An ideal answer involves a
determination of whether the group has the property with respect to
all, none or some generating sets.  There are few definitive answers
of this form when the group in question is Thompson's group $F$
because there is a single finite generating set with respect to
which the word length of group elements can be computed.  This
allows results for $F$ such as
\begin{thm}\label{thm:x0x1}
With respect to the generating set $\{x_0,x_1\}$, Thompson's group
$F$

\begin{enumerate} \item is not almost convex. \cite{CT}
\item has only pockets of depth two. \cite{CT2}
\end{enumerate}
\end{thm}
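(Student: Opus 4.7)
The plan is to work in the tree-pair diagram model of $F$, in which each element is represented by a pair $(T_-,T_+)$ of finite rooted binary trees with the same number of leaves, modulo cancellation of matching carets. Word length with respect to $\{x_0,x_1\}$ can be read off such a diagram by a Fordham-type formula that assigns a nonnegative integer weight to each pair of corresponding leaves of $T_-$ and $T_+$ (depending on the local left/right branching pattern) and sums these weights. Both parts of the theorem then become combinatorial statements about how these weights behave under right-multiplication by a generator.

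For part (1), the approach is to exhibit, for each $n$, a pair of elements $g_n,h_n$ with $|g_n|=|h_n|=n$ and $d(g_n,h_n)\le 2$ such that any path from $g_n$ to $h_n$ staying in the ball of radius $n$ has length growing without bound in $n$. Natural candidates are elements whose tree-pair diagrams are long right-combs with a localized perturbation near the bottom: the product $g_n^{-1}h_n$ can be arranged to be a short word, but using the weight formula one shows that any short path between $g_n$ and $h_n$ must pass through intermediate tree-pairs that insert additional carets and hence have total weight exceeding $n$. This forces every connecting path in $B(n)$ to have length growing with $n$, violating almost convexity.

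For part (2), the strategy is to analyze how the weight of an element changes under right-multiplication by a single generator and to classify the configurations in which the weight fails to strictly increase ``as one moves outward.'' A pocket corresponds exactly to such a configuration. The claim is that every such obstruction is resolved after a detour of at most two generators: after two multiplications one can always apply a tree cancellation to return to a strictly shorter form. This is verified by a case analysis on leaf types at which the multiplications act, together with a check that no configuration requires three or more nongeodesic steps before a cancellation becomes available.

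The main obstacle in both parts is the exhaustive case analysis on leaf configurations. For part (1), one must rule out every possible shortcut between $g_n$ and $h_n$, not only the obvious ones, and this amounts to showing that every intermediate tree-pair shape on a short path forces the Fordham weight above $n$. For part (2), the delicate point is verifying that depth two suffices in all cases --- and in particular that no leaf configuration forces depth three or more. Both arguments ultimately reduce to disciplined bookkeeping with the weight formula, and getting the combinatorics exhaustive is where the real work lies.
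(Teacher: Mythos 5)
The paper does not prove Theorem~\ref{thm:x0x1}; it is stated with citations to Cleary--Taback~\cite{CT} and~\cite{CT2} purely as motivation, so there is no proof here to compare yours against directly. What the paper does contain (Sections~\ref{sec:notAC} and~\ref{sec:pockets}) are generalizations of both parts to the larger generating sets $X_n$, built on the penalty-tree length formula, and these make visible the mechanism the original arguments rely on --- which is where your sketch goes astray.

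For part~(1), you attribute the obstruction to intermediate tree-pairs that ``insert additional carets and hence have total weight exceeding $n$.'' This is not how the argument works, either in~\cite{CT} or in the paper's Section~\ref{sec:notAC}. The witness element $g$ is engineered so that multiplications along any candidate connecting path \emph{never} require new carets: the left subtrees hanging off the right spine of $T_-$ are taken to be complete binary trees with enough levels that all the generator rearrangements stay inside existing carets, and the positive tree never changes. The contradiction instead comes from tracking the position of a single distinguished right caret (called $r_{n+1}$ in Section~\ref{sec:notAC}). Any path from $gx_n$ to $gx_n^{-1}$ must at some point return that caret to its original level on the right side, which forces a prefix $\beta$ with one more negative-exponent letter than positive-exponent letters; one then shows each negative-exponent letter along $\beta$ strictly increases word length, pushing the prefix outside the ball. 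A case analysis organized around ``which intermediate shapes add carets'' would miss this entirely, since by construction none of them do.

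For part~(2), your plan --- show any non-geodesic excursion can be corrected within two steps --- is the correct shape of the claim, but ``after two multiplications one can always apply a tree cancellation'' is just a restatement of it. The actual content of~\cite{CT2} is a classification of dead-end elements and a verification, pocket by pocket, that depth exceeds one and is bounded by two; and the paper's Section~\ref{sec:pockets} shows that the bound of two is special to $n=1$ (pocket depth grows with $n$), so whatever case analysis you run has to detect what in the $\{x_0,x_1\}$ situation prevents the deeper-pocket configurations that appear for $X_n$, $n\ge 2$. As written, the proposal gives no indication of what those configurations are or why they cannot occur, so it cannot be checked.
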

Ideally we would like to determine whether the group has these, or
other, properties with respect to any or no generating set, or list
exactly those generating set which yield these properties.

In this paper, we present a method for computing the word length of
elements of $F$ with respect to {\em consecutive} generating sets of
the form $X_n = \{x_0,x_1, \cdots ,x_n\}$.  This greatly expands the
set of generating sets in which we can compute the word length.  We
note that the three known methods of computing word length for the
generating set $\{x_0,x_1\}$, due to  Fordham \cite{F}, Guba
\cite{G}, and Belk-Brown \cite{BB}, are all special cases of our
procedure when $n=1$.

We use this method to prove the following theorems which extend the
results listed above in Theorem \ref{thm:x0x1}.

{\bf Theorem 6.1.} Thompson's group $F$ is not almost convex with
respect to the generating set $X_n = \{x_0,x_1, \cdots ,x_n\}$ .

{\bf Theorem 7.1.} For any $k \geq 1$, Thompson's group $F$ has
pockets of depth at least $k$ with respect to the generating set
$X_n = \{x_0,x_1, \cdots ,x_n\}$, for $n \geq 2k+2$.

In addition, we are able to provide an upper bound on the depth of
these pockets which is also dependent on $n$.

This paper is organized as follows.  The second section provides a
short introduction to Thompson's group $F$.  The third section
outlines and proves our procedure for computing word length,
although the proofs of the two main lemmas are deferred to sections
four and five.  Section six is devoted to the proof of Theorem
\ref{thm:notAC}, and in section seven we prove Theorem
\ref{thm:pockets} as well as an upper bound on pocket depth.

\section{Background on Thompson's group $F$}

We present a brief introduction to Thompson's group $F$ and refer
the reader to \cite{cfp} for a more detailed discussion.  This group
can be studied either as a finitely or infinitely presented group,
using the two standard presentations: $$\langle x_k, \ k \geq 0 |
x_i^{-1}x_jx_i = x_{j+1} \ if \ i<j \rangle$$ or, as it is
clear that $x_0$ and $x_1$ are sufficient to generate the entire
group,
 since powers of $x_0$ conjugate $x_1$ to $x_i$ for $i \geq 2$,
$$\langle x_0,x_1 |
[x_0x_1^{-1},x_0^{-1}x_1x_0],[x_0x_1^{-1},x_0^{-2}x_1x_0^2]
\rangle.$$  The relators in the infinite presentation are all a
consequence of the basic set of two relators given in the finite
presentation.

With respect to the infinite presentation, each element $g \in F$
can be written in normal form as $$g =x_{i_1}^{r_1}
x_{i_2}^{r_2}\ldots x_{i_k}^{r_k} x_{j_l}^{-s_l} \ldots
x_{j_2}^{-s_2} x_{j_1}^{-s_1} $$ with $r_i, s_i >0$, $i_1<i_2 \ldots
< i_k$ and $j_1<j_2 \ldots < j_l$. This normal form is unique if we
further require that when both $x_i$ and $x_i^{-1}$ occur, so does
$x_{i+1}$ or $x_{i+1}^{-1}$, as discussed by Brown and Geoghegan
\cite{BG}. We will use the term \emph{normal form} to mean this
unique normal form.

Elements of $F$ can be viewed combinatorially as pairs of finite
binary rooted trees, each with the same number $n$ of carets, called
tree pair diagrams. We define a {\em caret} to be a vertex of the
tree together with two downward oriented edges, which we refer to as
the left and right edges of the caret. The {\em right (respectively
left) child} of a caret $c$ is defined to be a caret which is
attached to the right (resp. left) edge of $c$.  If a caret $c$ does
not have a right (resp. left) child, we call the right (resp. left)
leaf of $c$ {\em exposed}.  Define the {\em level} of a caret
inductively as follows. The root caret is defined to be at level 1,
and the child of a level $k$ caret has level $k+1$, for $k \geq 1$.
The {\em left (resp. right) side} of a tree is defined to be the
maximal path of left (resp. right) edges beginning at the root
caret.

We number the leaves of each tree from left to right from $0$
through $n$, and number the carets in infix order from $1$ through
$n$.  The infix ordering is carried out by numbering the left child
of a caret $c$ before numbering $c$, and the right child of $c$
afterwards.

An element $g \in F$ is represented by an equivalence class of tree
pair diagrams, among which there is a unique reduced tree pair
diagram.  We say that a pair of trees is {\em unreduced} if when the
leaves are numbered from $0$ through $n$, there is a caret in both
trees with two exposed leaves bearing the same leaf numbers.  We
remove such pairs until no more exist, producing the unique {\em
reduced} tree pair diagram representing $g$.  See Figure
\ref{fig:reduced} for an example of  reduced and unreduced tree pair
diagrams representing the same group element.  The reduced tree pair
diagrams for $x_0$ and $x_n$ are given in Figure
\ref{fig:generators}.  When we write $g = (T,S)$, we are assuming
that this is the unique reduced tree pair diagram representing $g
\in G$.

\begin{figure}
\begin{center}
\includegraphics[width=5in]{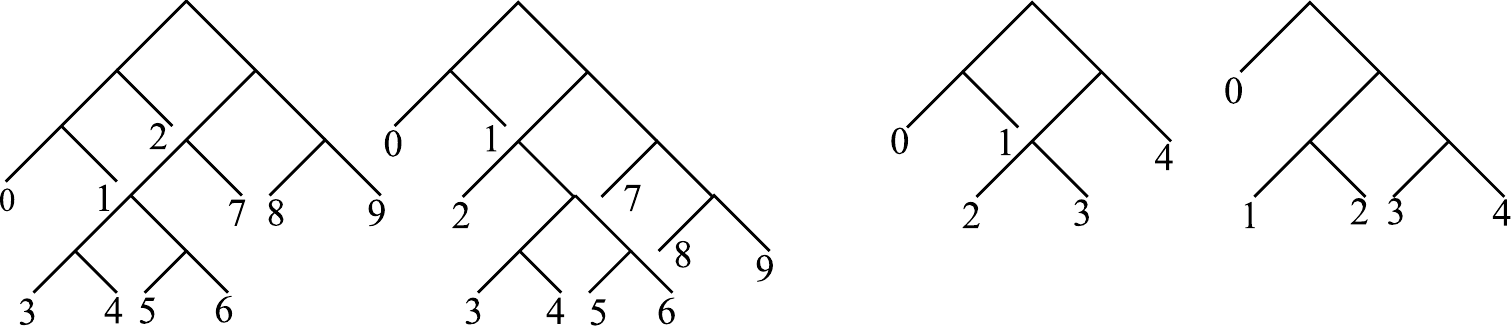}
\caption{An example of an unreduced and then a reduced tree pair
diagram representing the same group element.} \label{fig:reduced}
\end{center}
\end{figure}

\begin{figure}
\begin{center}
\includegraphics[width=5in]{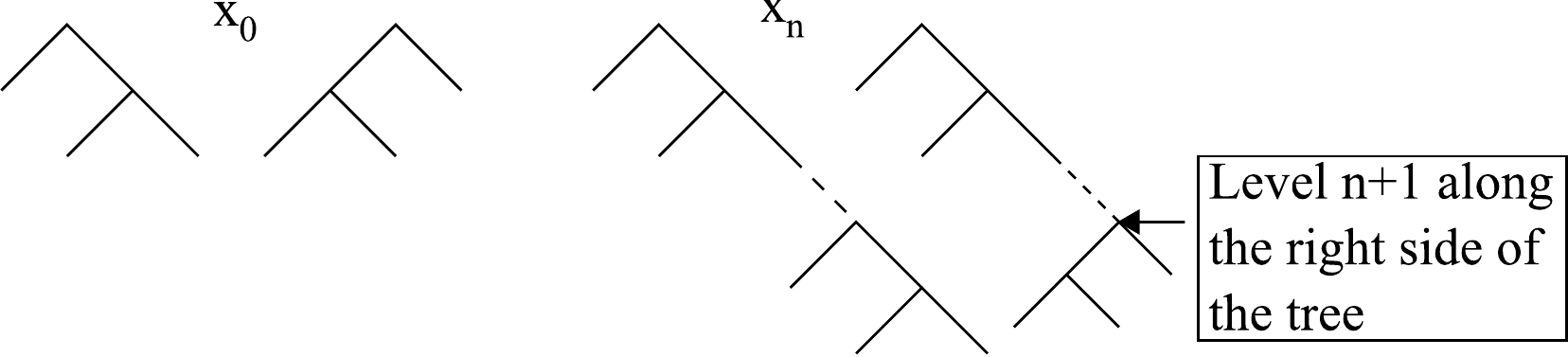}
\caption{The reduced tree pair diagrams representing the generators
$x_0$ and $x_n$ of $F$.} \label{fig:generators}
\end{center}
\end{figure}

The equivalence of these two interpretations of Thompson's group is
given using the normal form for elements with respect to the
standard infinite presentation, and the concept of leaf exponent. In
a single tree $T$ whose leaves are numbered from left to right
beginning with $0$, the {\em leaf exponent} $E(k)$ of leaf number
$k$ is defined to be the integral length of the longest path of left
edges from leaf $k$ which does not reach the right side of the tree.
Figure \ref{fig:leafexp} gives an example of a tree whose leaf
exponents are computed.

Given a reduced tree pair diagram $(T,S)$ representing $g \in F$,
compute the leaf exponents $E(k)$ for all leaves $k$ in $T$,
numbered $0$ through $n$.   The negative part of the normal form for
$g$ is then $x_n^{-E(n)} x_{n-1}^{-E(n-1)} \cdots x_1^{-E(1)}
x_0^{-E(0)}$. We compute the exponents $E(k)$ for the leaves of the
tree $S$ and thus obtain the positive part of the normal form as
$x_0^{E(0)} x_{1}^{E(1)} \cdots x_m^{E(m)}$.
 Many of these exponents will be $0$, and after deleting these, we can index the remaining
terms to correspond to the normal form given above, following
\cite{cfp}.  As a result of this process, we often denote a tree
pair diagram as $(T_-,T_+)$, since the first tree in the pair
determines the terms in the normal form with negative exponents, and
the second tree determines those terms with positive exponents.  We
refer to $T_-$ as the negative tree in the pair, and $T_+$ as the
positive tree.

\begin{figure}
\begin{center}
\includegraphics[scale=.95]{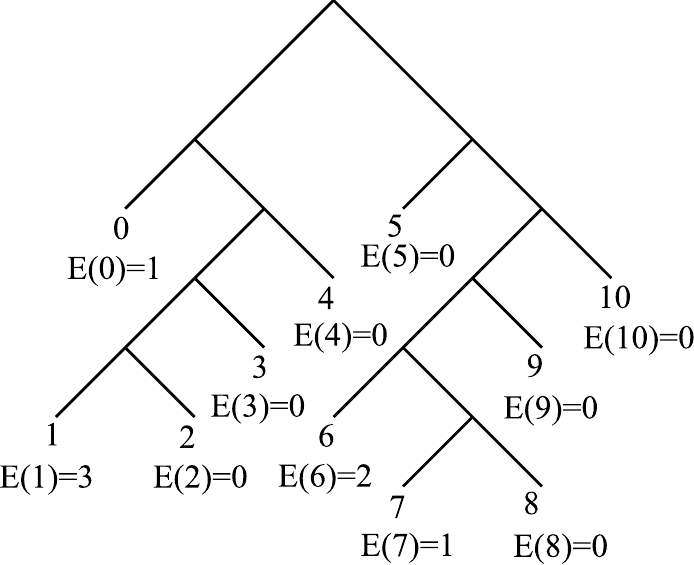}
\caption{An example of a tree with leaf exponents computed.}
\label{fig:leafexp}
\end{center}
\end{figure}

Group multiplication is defined as follows when multiplying two
elements represented by tree pair diagrams.  Let $g = (T_-,T_+)$ and
$h = (S_-,S_+)$.  To form the product $gh$, we take unreduced
representatives of both elements, $(T'_-,T'_+)$ and $(S'_-,S'_+)$,
respectively, in which $S'_+ = T'_-$.  The product is then
represented by the (possibly unreduced) pair of trees $(S'_-,T'_+)$.
An example of the unreduced representatives necessary to perform
group multiplication is given in Figure \ref{fig:x1mult}, where the
trees can be used to form the product $gx_1$, for $g =
x_0x_1x_4^2x_5^{-1}x_3^{-1}x_2^{-2}x_0^{-1}$.

\begin{figure}
\begin{center}
\includegraphics[width=5.5in]{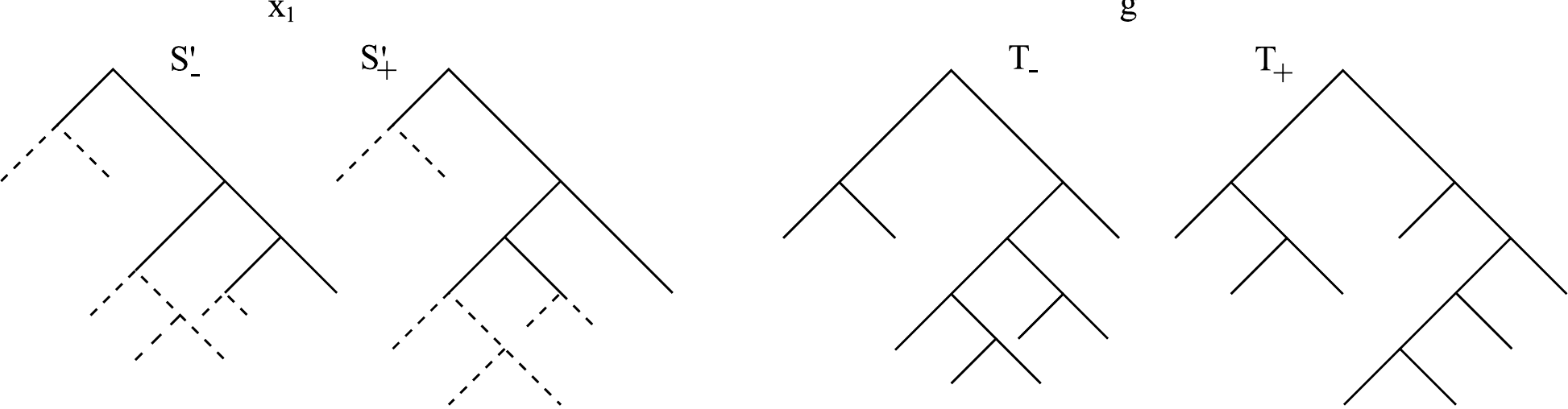}
\caption{To multiply $g
=x_0x_1x_4^2x_5^{-1}x_3^{-1}x_2^{-2}x_0^{-1}$ by the generator
$x_1=(S_-,S_+)$, we use an unreduced representative of $x_1$,
pictured above.  Dashed carets indicate the carets added in order to
perform the multiplication.} \label{fig:x1mult}
\end{center}
\end{figure}

\section{Computing word length with respect to a consecutive
generating set} In this section, we describe a method for computing
the word length of elements of $F$ with respect to a {\em
consecutive} generating set of the form $X_n = \{x_0,x_1, \cdots
,x_n\}$, which is a subset of the standard infinite presentation for
$F$. In the case $n=1$, there are three known formulae for computing
word length, due to Fordham \cite{F}, Guba \cite{G}, and Belk and
Brown \cite{BB}. We end this section with a comparison of these
methods, and translate the terminology of each into that of the
present paper.

Below we present our method for computing word length, along with a
detailed example.  The proof that this method actually computes the
word length of group elements follows the outline of Fordham's
proof, and we apply a lemma from \cite{F} as the main step in our
proof.  We then require two technical lemmas to show that the
conditions in Fordham's lemma are fulfilled, and we defer the proofs
of these lemmas to Sections \ref{sec:lemma1} and \ref{sec:lemma2}
below.

Let $T$ be a finite rooted binary tree with $n$ carets, in which we
number the carets from $1$ through $n$ in infix order. We use the
infix numbers as names for the carets, and the statement $p<q$ for
two carets $p$ and $q$ simply expresses the relationship between the
infix numbers. A caret is said to be a right (resp. left) caret if
one of its edges lies on the right (resp. left) side of $T$.  The
root caret can be considered either left or right.  A caret which is
neither left nor right is called an interior caret.

Our formula for the word length of elements $g \in F$ with respect
to the generating set $X_n = \{x_0,x_1, \cdots ,x_n\}$ has two
components. The first we call $l_{\infty}(g)$, as it is the word
length of $g$ with respect to the infinite generating set $\{x_i | i
\geq 0\}$ for $F$.  This quantity is simply the number of carets in
the reduced tree pair diagram representing $g$ which are not right
carets.  The difference between $l_{\infty}(g)$ and the word length
$l_n(g)$ is measured by what we refer to as the {\em penalty}
weight, denoted $p_n(g)$.  Twice this penalty weight is the second
component of our word length formula.

The intuition for this formula comes from the effect that
multiplication by a generator has on a tree pair diagram
$(T_-,T_+)$.  One can view multiplication by each generator as
performing a proscribed combinatorial rearrangement of the subtrees
of $T_-$ or $T_+$.  The rearrangement of these subtrees induced by
multiplication by $x_0$ and $x_2$ is shown explicitly in Figure
\ref{fig:mult}, and is analogous for multiplication by $x_n$ with
$n=1$ or $n >2$.

\begin{figure}
\begin{center}
\includegraphics[width=5in]{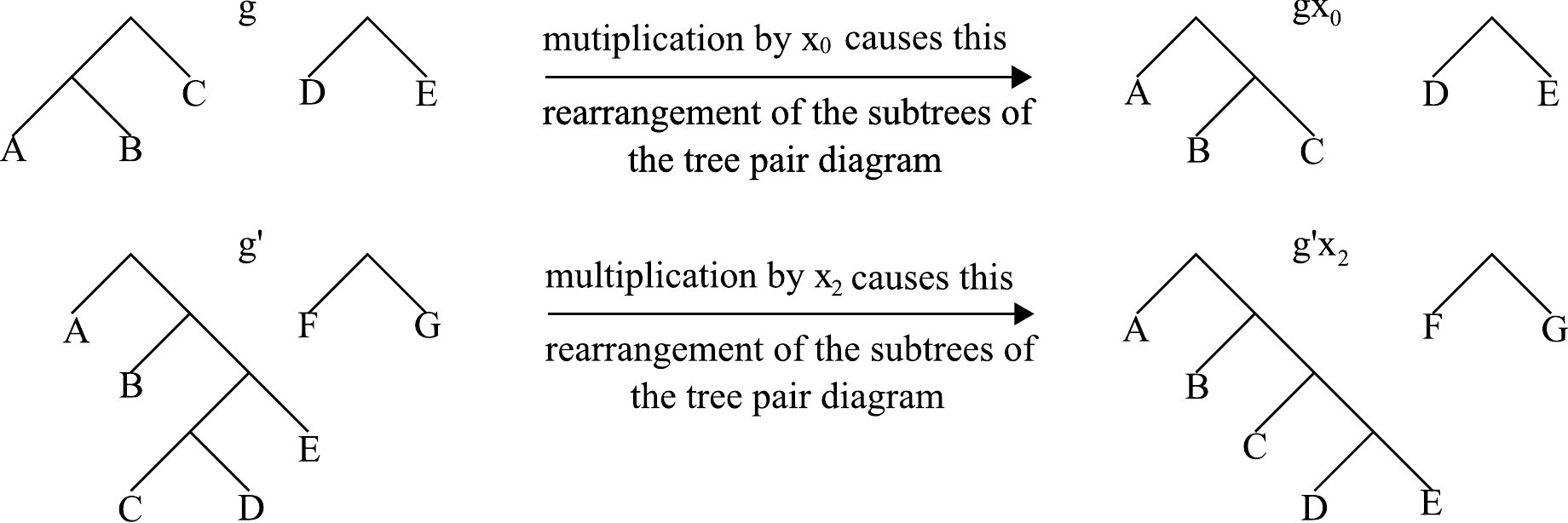}
\caption{The combinatorial rearrangement of the subtrees of the tree
pair diagrams representing elements $g$ and $g'$ of $F$ induced by
multiplication by $x_0$ and $x_2$ respectively. The letters $A$
through $G$ represent possibly empty subtrees of the tree pair
diagram.} \label{fig:mult}
\end{center}
\end{figure}

In creating a minimal length representative for $g \in F$, whose
length is necessarily the word length of $g$, there are some
arrangements of carets in $T_-$ or $T_+$ which may be harder to
produce using the combinatorial rearrangements available with the
given generators.  This incurs a ``penalty" contribution to the
length of the word.  Determining this penalty contribution $p_n(g)$
to the word length lies at the heart of our method.

We begin by distinguishing a particular type of caret in a single
tree.  Caret types are central to the length formulae of Fordham
\cite{F} and Belk-Bux \cite{BB}.  While they require, respectively,
seven and four caret types, we define a single one which is
sufficient for our proofs below.

\begin{defn}
Caret $p$ in a tree $T$ has {\em type N} if caret $p+1$ is an
interior caret which lies in the right subtree of $p$.
\end{defn}

We use this definition to describe certain carets in the tree pair
diagram for $g \in F$ which we call {\em penalty carets} as they
help determine the penalty contribution to the word length $l_n(g)$.
Let $g \in F$ have a reduced tree pair diagram $(T_-,T_+)$ in which
the carets are numbered in infix order.

\begin{defn} Caret $p$ in a tree pair diagram $(T_-,T_+)$ is a {\it penalty caret} if either
\begin{enumerate}
\item $p$ has type $N$ in either $T_-$ or $T_+$, or
\item $p$ is a right caret in both $T_-$ and $T_+$ and caret $p$ is not the final caret in the tree pair
diagram.
\end{enumerate}
\end{defn}

To compute the penalty contribution to the word length for a given
$g=(T_-,T_+) \in F$ we use the following procedure, which will be
made precise in Section \ref{sec:penaltytree}.  Using a notion of
caret adjacency defined below, we take the two trees $T_-$ and $T_+$
and construct a single tree $\PP$, called a {\em penalty tree},
whose vertices correspond to a subset of the carets of $T_-$ and
$T_+$, necessarily including the penalty carets.  This tree is
assigned a {\em weight} according to the arrangement of its
vertices. Minimizing this weight over all possible penalty trees
that can be constructed using the adjacencies between the carets of
$T_-$ and $T_+$ yields the penalty contribution $p_n(g)$ to the word
length $l_n(g)$.  We will prove the following theorem.

\begin{thm}\label{thm:length}
For every $g\in F$, the word length of $g$ with respect to the
generating set $X_n = \{x_0,x_1, \cdots ,x_n \}$ is given by the
formula
$$l_n(g) = l_\infty(g)+2p_n(g)$$
where $l_{\infty}(g)$ is the number of carets in the reduced tree
pair diagram for $g$ which are not right carets, and $p_n(g)$ is the
penalty weight.
\end{thm}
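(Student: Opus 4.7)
My plan is to follow the strategy of Fordham \cite{F} for the $\{x_0,x_1\}$ case, adapted to the consecutive generating set $X_n$. The main step is a lemma from \cite{F} which reduces computing $l_n(g)$ to verifying, for the candidate weight function $W(g) = l_\infty(g) + 2p_n(g)$, two properties: (i) that $W$ changes by at most one under right multiplication by any generator $x_i^{\pm 1} \in X_n^{\pm 1}$, and (ii) that for every nontrivial $g$ there is some $x_i^{\pm 1}$ whose multiplication actually decreases $W$ by one. Together with $W(1)=0$, these give an inductive proof that $W(g) = l_n(g)$.

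To verify these two properties I would use the two technical lemmas stated in this section and proved in Sections \ref{sec:lemma1} and \ref{sec:lemma2}. The first lemma (the bounded-change lemma) tracks how $l_\infty + 2p_n$ varies under the combinatorial rearrangement of Figure \ref{fig:mult}, case by case for each generator in $X_n$, and shows $|\Delta W|\leq 1$ in every case. The second lemma (the descent lemma) exhibits, for any nontrivial pair $(T_-,T_+)$, a specific generator whose multiplication produces $\Delta W = -1$; the candidate is selected by locating a caret that either contributes to $l_\infty$ without any penalty role, or participates in a penalty configuration whose elimination drops $p_n$ by one while changing $l_\infty$ by the residual amount needed for a net decrement. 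With these two lemmas in place, the theorem follows directly from Fordham's lemma.

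The principal obstacle is the bounded-change property for $p_n$. Because $p_n(g)$ is defined by minimizing a weight over all valid penalty trees $\PP$, a bound on $|\Delta p_n|$ cannot be read off from any single $\PP$; rather, it requires comparing two optimization problems, one before and one after multiplication. My approach would be, given a minimizing $\PP$ for $(T_-,T_+)$, to construct an explicit modified tree $\PP'$ for the multiplied pair that differs from $\PP$ by a bounded local modification, and to argue symmetrically in reverse so that the two minima lie within one of each other. This construction has to be carried out across every local rearrangement of Figure \ref{fig:mult} and for every generator $x_i \in X_n$; in some cases, the creation or destruction of a type $N$ caret is exactly compensated by a corresponding change in the adjacency graph that determines $\PP$. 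The bookkeeping required for this compensation, together with the parity argument needed to reconcile the change in $l_\infty$ with the change in $2p_n$, forms the technical heart of the argument and motivates the deferral of the two lemmas to Sections \ref{sec:lemma1} and \ref{sec:lemma2}.
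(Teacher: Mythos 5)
Your proposal follows essentially the same route as the paper: apply Fordham's Lemma 3.1.1 to $\phi_n(g) = l_\infty(g) + 2p_n(g)$, verify the trivial base conditions directly, and defer the bounded-change and descent properties to two technical lemmas (Lemmas~\ref{L:change1} and~\ref{L:reduce}, proved in Sections~\ref{sec:lemma1} and~\ref{sec:lemma2}). You also correctly identify the central difficulty — that $p_n$ is a minimum over penalty trees, so controlling its change requires a two-sided argument comparing minimizers before and after multiplication — which is exactly the work carried out in the deferred sections.
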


\subsection{Constructing a penalty tree}  \label{sec:penaltytree}Constructing penalty trees
for $g \in F$ requires a concept of directed caret adjacency, which
is an extension of the infix order.  To define the concept of
adjacency between carets in a tree $T$, we view each caret as a
space rather than an inverted v.  The point of intersection of the
left and right edges of the caret naturally splits the boundary of
this space into a left and right component. The space is bounded on
the right (resp. left) by a {\em generalized right (resp. left)
edge}. The generalized right (resp. left) edge may consist of actual
left (resp. right) edges of other carets in the tree, in addition to
the actual right (resp. left) edge of the caret itself. For example,
in Figure \ref{fig:adjacency}, the spaces which we consider as
carets are shaded, and the generalized left edge of caret 9 includes
the right edges of carets 7 and 8.

\begin{figure}
\begin{center}
\includegraphics[scale=.75]{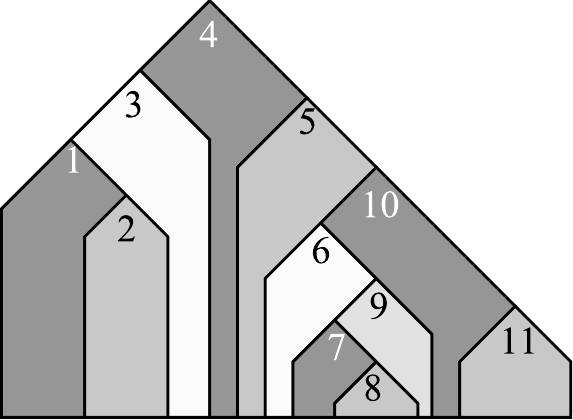}
\caption{The spaces corresponding to the different carets are
shaded.  These spaces are used to define the notion of caret
adjacency.} \label{fig:adjacency}
\end{center}
\end{figure}

Let $p$ and $q$ denote carets in a tree pair $(T_-,T_+)$, that is,
$p$ corresponds to a pair of carets, one in $T_-$ and one in $T_+$,
each with infix number $p$, and the same is true for $q$.
Additionally, assume  $p < q$. We say that $p$ is adjacent to $q$,
written $p\prec q$, if there is a caret edge, in either $T_-$ or
$T_+$, which is both part of the generalized right edge of caret $p$
and the generalized left edge of caret $q$. We equivalently say that
{\em traversing} the generalized left edge of caret $q$ takes you to
caret $p$ in at least one tree.  It is always true that carets $p$
and $p+1$ satisfy $p \prec p+1$.  Although the ordering of carets
given by infix number is not symmetric but is transitive, the notion
of caret adjacency is neither symmetric nor transitive.

We introduce a dummy caret denoted $v_0$ which is adjacent to all
left carets in both $T_-$ and $T_+$.  One can think of $v_0$ as
being the space to the left of the left side of each tree.  We now
construct a penalty tree $\PP$ corresponding to the pair of trees
$(T_-,T_+)$, which has this dummy caret $v_0$ as its root, according
to the following rules.

\begin{enumerate}
\item The vertices of $\PP$ are a subset of the carets in the tree pair
diagram, which we refer to by infix numbers: $0=v_0,1,2, \cdots ,k$,
always including $v_0$.
\item If a directed edge is drawn from vertex $p$ to vertex $q$ in
$\PP$ then we must have $p \prec q$.
\item There is a vertex for every penalty caret in $(T_-,T_+)$.
\item Each leaf of $\PP$ corresponds to a penalty caret of
$(T_-,T_+)$.  The only exception to this is when $\PP$ consists only
of the root $v_0$ and no edges.
\end{enumerate}
The penalty tree $\PP$ is oriented in the sense that there is a
unique path from $v_0$ to every vertex $p \in \PP$, and if this path
passes through vertices $v_0, p_1, p_2, \ldots, p_{i}=p$ then we
must have $v_0 \prec p_1 \prec \cdots \prec p_i=p$.  Two vertices
$p, q$ in the tree are comparable if there is either a path $p=w_1,
w_2, \ldots, w_{i+1}=q$ or $q=w_1, w_2, \ldots, w_{i+1}=p$ with $w_j
\prec w_{j+1}, \forall j=1, \ldots, i+1$, and in this case we say
$d_{\PP}(p,q)=i$.

\begin{figure}
\begin{center}
\includegraphics[width=5in]{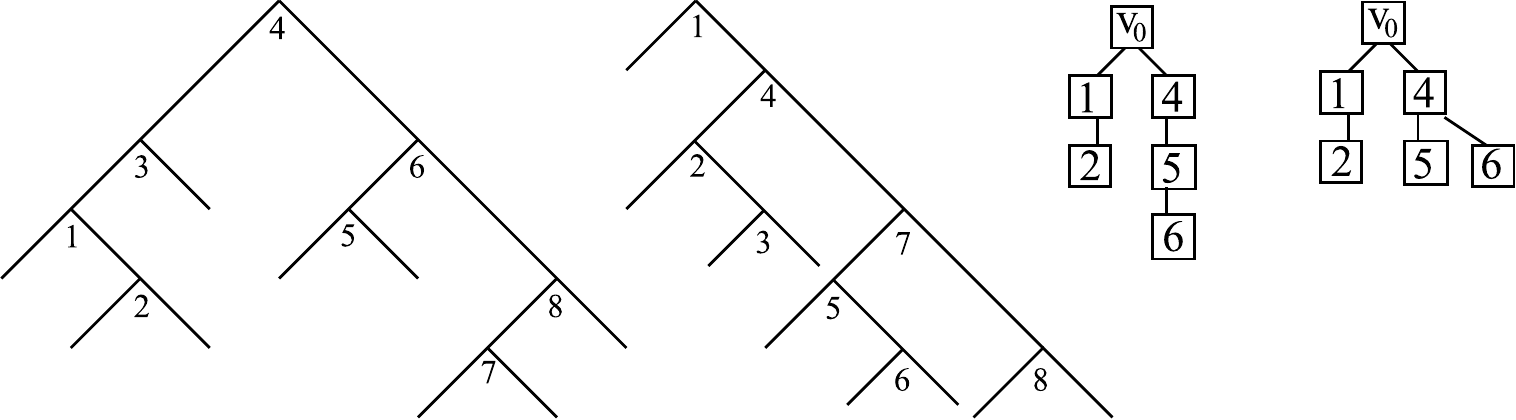}
\caption{An example of two penalty trees associated to the same
group element, whose carets are numbered in infix order from $1$
through $8$.} \label{fig:pen-trees}
\end{center}
\end{figure}

When working with these penalty trees, we often abuse notation and
refer to the edge between $p$ and $q$ as $p \prec q$, and
conversely, will sometimes refer to an adjacency $p \prec q$ which
exists in a tree pair diagram as an edge, meaning it can give rise
to an edge in a penalty tree.  Also, we call an edge $p \prec q$
both ``an edge out of $p$" and ``an edge into $q$."

The penalty weight of a penalty tree is bounded above by the number
of vertices on the tree, but not all vertices on the tree contribute
to the weight. More precisely, we define:
\begin{defn}The {\em n-penalty weight} $p_n(\PP)$ of a penalty tree $\PP$
associated to $g = (T_-,T_+) \in F$ is the number of vertices $v_i
\in \PP$ such that $d_{\PP}(0, v_i) \geq 2$ and there exists a leaf
$l_i$ in $\PP$ with $d_{\PP}(v_i, l_i) \geq n-1$. These vertices are
called the {\em weighted} carets.
\end{defn}

To compute the penalty contribution $p_n(g)$ to the word length
$l_n(g)$ for $g \in F$, we must minimize the penalty weight over all
penalty trees associated to $g$.
\begin{defn} For an element $g \in F$, define the penalty contribution $p_n(g)$ to the word length $l_n(g)$
by $$p_n(g)= min \{p_n(\PP)| \PP \mbox{ is a penalty tree for }
g=(T_-,T_+) \}$$
\end{defn}

This definition brings us to the statement of Theorem
\ref{thm:length}, which presents the formula $l_n(g) = l_{\infty}(g)
+ 2 p_n(g)$.  We call any penalty tree for $g$ which realizes
$p_n(g)$ a {\em minimal penalty tree}.

Computing the penalty contribution $p_n(g)$ for any $g=(T_-,T_+) \in
F$ can be quite difficult, as there may be a large number of
possible penalty trees based on the caret adjacencies present in
$T_-$ and $T_+$.  In Sections \ref{sec:notAC} and \ref{sec:pockets}
we present families of group elements where the penalty trees with
minimal penalty weight can be determined based on features of the
original tree pair diagrams.  One such feature which greatly
simplifies the computation of $p_n$ is recorded in the following
observation.

\begin{obs} \label{O:bottleneck}
Let $g \in F$ be represented by the reduced tree pair diagram $(T_-,
T_+)$. If $(T_-,T_+)$ contains two penalty carets $p \prec q$, where
in both trees, $p$ is a right caret which is not type $N$,
 then on any penalty tree $\PP$ for $g$, the unique
path from $v_0$ to $q$ must contain the caret $p$.
\end{obs}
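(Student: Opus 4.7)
The plan is to reduce the observation to a single adjacency fact: no caret $r < p$ can satisfy $r \prec q$ in either $T_-$ or $T_+$. Granting this, the observation follows from the fact that the unique $v_0$-to-$q$ path in $\PP$ is strictly increasing in infix number.

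For the adjacency claim, I would fix a tree $T \in \{T_-, T_+\}$. Since $p$ is a right caret in $T$, the carets whose infix numbers exceed $p$ are precisely those in $p$'s right subtree, so $q$ lies in that right subtree. A caret $r < p$ lies in one of three regions: among the ancestors of $p$, in the left subtree of some ancestor of $p$, or in $p$'s own left subtree. In each case the generalized right edge of $r$ (its right edge together with the leftmost path of $r$'s right subtree) is confined to the same region and so stays entirely outside $p$'s right subtree. On the other hand, $q$'s generalized left edge is contained in $q$'s own subtree and hence in $p$'s right subtree. The two edge sets occupy disjoint regions of $T$, so they share no actual caret edge, giving $r \not\prec q$ in $T$. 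Repeating this in the other tree establishes the claim.

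With the claim in hand, I would argue as follows. Write the unique path from $v_0$ to $q$ in $\PP$ as $v_0 \prec p_1 \prec p_2 \prec \cdots \prec p_k = q$, where adjacency forces $p_1 < p_2 < \cdots < p_k$. Since $v_0$ is only adjacent to left carets, $p_1$ is a left caret in at least one of $T_\pm$. If $p_1 > p$, then $p_1$ lies in $p$'s right subtree in both trees (because $p$ is a right caret in both) and hence cannot be a left caret in either; so $p_1 \leq p$. Assume for contradiction that $p$ does not appear on the path. Then $p_1 < p$, and since $p_k = q > p$ there is a smallest index $j$ with $p_j > p$, giving $p_{j-1} < p < p_j$. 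Applying the claim with $p_j$ in place of $q$ shows that no caret of infix number less than $p$ is adjacent to $p_j$, contradicting $p_{j-1} \prec p_j$. Hence $p$ must lie on the path.

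The main technical obstacle is the adjacency claim, which is essentially a case analysis of where generalized right and left edges can live once $p$ is known to be a right caret in both trees; the right-caret hypothesis on $p$ in both trees is what does all the work, since it forces $p$'s right subtree to ``quarantine'' $q$ in each tree. The ``not type $N$'' hypothesis does not enter the adjacency argument; it is present only to specify that $p$ qualifies as a penalty caret via clause (2) rather than clause (1) of the definition.
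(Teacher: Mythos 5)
The paper states Observation~\ref{O:bottleneck} without proof, so there is no in-text argument to compare against; you are supplying the missing justification, and your argument is correct.

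Your reduction to the adjacency fact is the right move, and the fact itself is true for the reason you give. To spell it out in the paper's language: an adjacency $r \prec q'$ in a tree $T$ requires a caret edge lying on both the generalized right edge of $r$ and the generalized left edge of $q'$, i.e.\ either $r$ lies on the rightmost path of $q'$'s left subtree, or $q'$ lies on the leftmost path of $r$'s right subtree. When $p$ is a right caret of $T$ and $q' > p$, the left subtree of $q'$ sits entirely inside $p$'s right subtree, so a caret $r<p$ cannot lie there; and when $r<p$, the carets of $r$'s right subtree all have infix number at most the infix number of $r$'s nearest right-caret ancestor, hence at most $p$, so $q'$ cannot lie there either. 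This is exactly your ``quarantine'' picture, and it holds in each of the three regions you list (your phrase ``confined to the same region'' is a little loose for the case where $r$ is an ancestor of $p$, since the generalized right edge of such an $r$ descends into the left subtree of $r$'s right child, but that region is still disjoint from $p$'s right subtree, which is all that is needed). The second half of the argument --- $p_1 \le p$ because $v_0$ is adjacent only to left carets and no left caret can lie in $p$'s right subtree, and then a first-crossing contradiction --- is also correct, and it correctly handles the $j=1$ boundary case by forcing $j\ge 2$.

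One small quibble on your closing remark: the ``not type $N$'' hypothesis is indeed unused in your argument and is not needed for the statement, but it is not there ``to specify that $p$ qualifies as a penalty caret via clause~(2)''; clause~(2) already applies once $p$ is a right caret in both trees and $p\prec q$ forces $p$ not to be the final caret, independently of type~$N$. In the paper the ``not type $N$'' hypothesis is doing work elsewhere: it is what Observation~\ref{obs:not-type-N} needs to conclude that the only edge out of $p$ is $p\prec p+1$, and the two observations are always invoked in tandem to pin down the structure of the path through $p$.
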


\begin{obs}\label{obs:not-type-N}
Observation \ref{O:bottleneck} can be generalized as follows.  If
caret $p$ does not have type $N$ in either $T_-$ or $T_+$, then the
only caret $v$ with $p \prec v$ is $v=p+1$.
\end{obs}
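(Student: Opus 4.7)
The plan is to work one tree at a time: since an adjacency $p \prec v$ requires a shared edge in at least one of $T_-$ or $T_+$, it suffices to fix a tree $T \in \{T_-, T_+\}$ in which $p$ is not of type $N$ and show that within $T$ the only caret $v$ with $p \prec v$ via $T$ is $v = p+1$. I then split into two cases according to whether $p$ has a right subtree in $T$.

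In the first case, $p$ has no right subtree in $T$, so the generalized right edge of $p$ consists of the single leaf edge equal to $p$'s right edge. I would identify $p+1$ as the lowest ancestor of $p$ having $p$ in its left subtree, and observe that the right-child chain from $p$ up to the first left-child ancestor is precisely the portion of the rightmost path of $p+1$'s left subtree leading down to $p$'s right edge. Hence $p$'s right edge lies on $p+1$'s generalized left edge, and a short check shows that no strict ancestor of $p+1$ has $p$ on the rightmost path of its left subtree.

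In the second case, $p$ has a right subtree rooted at some $q$ in $T$, and I would first leverage the failure of type $N$ to deduce three structural facts by contradiction: $p+1 = q$, $q$ has no left subtree, and $p$ lies on the rightmost path of $T$. Indeed, otherwise $p+1$ would be neither on the leftmost path (never possible for a caret in $p$'s right subtree) nor on the rightmost path, so $p+1$ would be interior, forcing $p$ to be of type $N$. Given these reductions, the generalized right edge of $p$ reduces to just two edges: the $p$--$q$ edge and $q$'s left leaf edge. The latter is exactly $p+1$'s own left edge, yielding $p \prec p+1$, while the former is a right edge of $p$ that can lie on some other caret's generalized left edge only if $p$ sits on the rightmost path of that caret's left subtree, which contradicts $p$ being on the rightmost path of $T$.

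The main technical obstacle is parsing the definition of the generalized edges carefully: the generalized left edge of a caret $v$ is $v$'s left edge followed only by the rightmost path of $v$'s left subtree (nothing extending upward), so each edge in the tree lies on at most one caret's generalized left edge. Once that uniqueness is internalized, the rest is bookkeeping with the definitions of infix order, type $N$, and interior caret. The corner case in which $p$ is the last caret in infix order (so that $p+1$ does not exist and no adjacency out of $p$ occurs) should be noted but introduces no new content.
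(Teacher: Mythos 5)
The paper states Observation~\ref{obs:not-type-N} without proof, so there is no argument of the authors' to compare against; your write-up supplies the missing justification, and it is correct. The crucial point you isolate --- that each tree edge lies on the generalized left edge of at most one caret --- is exactly what makes the conclusion immediate once the generalized right edge of $p$ has been identified, and your case split does identify it cleanly: if $p$ has no right subtree, the generalized right edge is the single right-leaf edge of $p$, which sits on the rightmost path of the left subtree of the unique lowest ancestor having $p$ on its left, namely $p+1$; if $p$ does have a right subtree, the failure of type $N$ forces the right child of $p$ to be $p+1$ itself, with no left subtree and with $p$ on the right side of $T$, so the generalized right edge is exactly two edges, one of which is $p+1$'s left edge (giving $p \prec p+1$) and the other of which is a right-side edge of $T$ that cannot lie in any caret's left subtree. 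One small point worth recording explicitly in a final version: the fact that $p+1$ can never be a left caret when it lies in $p$'s right subtree is what rules out the third case of the left/right/interior trichotomy, and your parenthetical aside covers it; and the corner case where $p$ is the terminal right caret (so $p+1$ does not exist and the only candidate edge lies on the right side of $T$) is correctly dismissed.
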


The following lemma states that left carets in $T_-$ and $T_+$ can
never contribute to the penalty weight of a penalty tree.

\begin{lemma}\label{lemma:leftcarets}
Let $w = (T_-,T_+)$ be an element of $F$, and $p$ a caret which is a
left caret in either $T_-$ or $T_+$.  Then $p$ is not a weighted
caret in any minimal penalty tree for $w$.
\end{lemma}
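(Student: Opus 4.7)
The plan is to argue by contradiction. Suppose $p$ is weighted in some minimal penalty tree $\PP$ for $w = (T_-, T_+)$. The hypothesis that $p$ is a left caret in at least one of $T_-$, $T_+$ means, by the definition of $v_0$, that $v_0 \prec p$ is a valid adjacency. I will use this to re-route $p$ directly under $v_0$ and obtain a penalty tree of strictly smaller weight.

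Concretely, form $\PP'$ from $\PP$ in two steps. First, detach the entire subtree of $\PP$ rooted at $p$ and reattach it to $\PP$ as a direct child of $v_0$; this is legal by rule (2) because $v_0 \prec p$. Second, perform an iterative trimming: while $\PP$ has a leaf that is not a penalty caret, delete it. The second step restores rule (4), and rules (1)–(3) are preserved throughout because trimming only removes non-penalty carets and reattaching $p$ preserves rule (2) along every remaining path. So $\PP'$ is a bona fide penalty tree for $w$.

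Now I compare $p_n(\PP')$ to $p_n(\PP)$ vertex-by-vertex. For $p$ itself, $d_{\PP'}(0, p) = 1$, so $p$ is no longer weighted: contribution decreases by exactly one. For descendants of $p$ (which sit in the moved subtree), the distance from $v_0$ only decreases, while their own below-leaves in $\PP'$ are the same as in $\PP$; so no such vertex can newly become weighted. For the vertices $p_1, \ldots, p_{i-1}$ on the old $v_0$-to-$p$ path in $\PP$, their depths are unchanged, but the set of leaves beneath them in $\PP'$ is a subset of the set in $\PP$, so they can only lose weighted status, never gain it (and some may be trimmed away). For every remaining vertex of $\PP$ not in the subtree of $p$ and not on the path from $v_0$ to $p$, both the depth from $v_0$ and the subtree of leaves below are unchanged in $\PP'$, so its weighted status is identical. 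Summing, $p_n(\PP') \le p_n(\PP) - 1 < p_n(\PP)$, contradicting the minimality of $\PP$.

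The main obstacle is the bookkeeping in the last paragraph: one must check that moving the subtree of $p$ cannot make any other vertex newly weighted. The key observations are that weighted status requires both $d_\PP(0,\cdot) \ge 2$ \emph{and} a descendant leaf at distance $\ge n-1$, and that the modification only decreases depths (in the moved subtree) and only shrinks the descendant-leaf set (for vertices on the old path), while leaving everything else untouched; so every vertex's weighted status in $\PP'$ is dominated by its status in $\PP$, and the change at $p$ is strict. This monotonicity, plus the fact that trimming only removes vertices, is what forces the strict inequality.
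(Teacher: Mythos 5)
Your proposal is correct and takes essentially the same approach as the paper: detach $p$ and its subtree from its parent and re-attach it directly under $v_0$ using the adjacency $v_0 \prec p$, so that $p$ is no longer at depth $\geq 2$ and hence no longer weighted. You are in fact a bit more careful than the paper's own argument, which omits the trimming of newly exposed non-penalty leaves and simply asserts $p_n(\PP') < p_n(\PP)$; your vertex-by-vertex monotonicity check (depths only decrease in the moved subtree, leaf sets only shrink along the old path, everything else untouched) fills in that bookkeeping explicitly.
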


\begin{proof}
Suppose that ${\mathcal P}$ is a minimal penalty tree for $w$ in
which $p$ is a vertex that carries weight.  We construct a new
minimal penalty tree ${\mathcal P}'$ for $w$ in which $p$ in not a
weighted caret.  In ${\mathcal P}$, let $c$ be the vertex which is
the parent of $p$.  Since $p$ is weighted in ${\mathcal P}$, we know
that $c$ is not the root caret of ${\mathcal P}$.

To construct $\PP'$, begin with ${\mathcal P}$ and remove the edge
$c \prec p$.  Attach vertex $p$, and its subtrees via the adjacency
$v_0 \prec p$, which arises from the fact that $p$ is a left caret
in either $T_-$ or $T_+$, and call the resulting tree $\PP'$.  Thus
we see that $p_n({\mathcal P}') < p_n({\mathcal P})$, since $p$ is
no longer a weighted caret in ${\mathcal P}'$.  This contradicts the
fact that ${\mathcal P}$ was a minimal penalty tree for $w$, and the
lemma follows.
\end{proof}

We now address the question of whether a minimal penalty tree
consists entirely of carets corresponding to left and penalty carets
in the tree pair diagram for $g \in F$.  We show directly that such
a penalty tree can always be constructed when $n=1$, and note that
this fact follows from a result of Guba \cite{G} discussed in
Section \ref{sec:other-methods} below.  When $n >1$, this need not
be the case, as we illustrate with an example below.

\begin{lemma}\label{lemma:min-tree}
In the case $n=1$, a minimal penalty tree $\PP$ can always be
constructed for $g=(T_-,T_+) \in F$ all of whose vertices correspond
to left carets or penalty carets in the tree pair diagram.
\end{lemma}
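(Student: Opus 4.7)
The plan is to start with any minimal penalty tree $\PP$ and modify it to eliminate any \emph{extraneous} vertex, by which I mean a vertex that is neither a left caret nor a penalty caret; iterating the modification yields the desired minimal penalty tree.

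Suppose $\PP$ contains an extraneous vertex $v$. Because $v$ is not a penalty caret, $v$ has type non-$N$ in both $T_-$ and $T_+$, so by Observation \ref{obs:not-type-N} its only out-adjacency is $v \prec v+1$; consequently $v+1$ is the only possible child of $v$ in $\PP$. Since leaves of $\PP$ must be penalty carets and $v$ is not one, $v$ must have a child, so $v+1$ is $v$'s unique child. Because $v$ is not a left caret, $v_0 \not\prec v$, so the parent $u$ of $v$ in $\PP$ is not $v_0$. Following the descending chain $v, v+1, v+2, \ldots$ in $\PP$, each extraneous vertex on the chain has $+1$ as unique child, and Lemma \ref{lemma:leftcarets} rules out left carets at depth $\geq 2$ in a minimal penalty tree. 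Thus the chain must terminate at a penalty caret $q = v+k$ for some $k \geq 1$.

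To modify $\PP$, I would excise the extraneous carets $v, v+1, \ldots, v+k-1$ and reattach the subtree rooted at $q$ to a non-extraneous vertex $w \in \PP$ satisfying $w \prec q$. For $n=1$ the penalty weight counts exactly the vertices of depth $\geq 2$, so removing $k$ extraneous vertices reduces the weight by $k$; provided $w$ lies at depth no greater than that of $u$, the depths of $q$'s descendants do not increase, so the modified tree has weight no larger than $\PP$ and strictly fewer extraneous vertices, establishing the iteration.

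The main obstacle is verifying that such a valid attachment point $w$ exists. The key structural observation is that, for $q$ in a tree $T$, the generalized left edge of $q$ passes along the right edges of all carets on the right spine of $q$'s left subtree in $T$, so $q$ has incoming adjacencies not just from $q-1$ but from every caret on such a spine. A case analysis on how $q$ qualifies as a penalty caret --- either type $N$ in one of the trees, or right in both $T_-$ and $T_+$ and non-final --- then identifies a caret $w$ on one of these spines that is a left caret of the containing tree or is itself type $N$ (hence a penalty caret), supplying the required $w$. Granted this existence claim, iterating the modification eliminates every extraneous vertex and produces a minimal penalty tree whose vertices are all left carets or penalty carets.
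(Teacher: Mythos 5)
Your overall strategy — iteratively prune vertices that are neither left nor penalty carets from a minimal penalty tree and reattach the remainder — is the same in spirit as the paper's, but the execution has genuine gaps that the paper's version avoids.

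The crux of the argument is the reattachment step, and you leave it unproved. You write that ``a case analysis on how $q$ qualifies as a penalty caret\ldots then identifies a caret $w$,'' but that case analysis is exactly the hard part, and it is not carried out. The paper instead works one extraneous vertex $v_i$ at a time (rather than a whole chain), and for the local pair $(v_i, v_i+1)$ makes a precise geometric observation: since $v_i$ is an interior caret with exposed right leaf in one of the trees, there is a type-$N$ caret $c$ (hence a penalty caret, hence automatically a vertex of $\PP$) adjacent to $v_i+1$, namely the caret to which the actual left edges of $v_i$ and $v_i+1$ lead. Reattaching via $c \prec v_i+1$ is a strictly local move; there is no need to chase a chain down to a penalty caret or to reason about spines of $q$'s left subtree.

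Your weight accounting also imposes a condition you never establish: that $w$ can be chosen at depth no greater than that of $u$ in $\PP$. If $w$ is a penalty caret rather than a left caret, nothing constrains its depth in $\PP$, so this ``provided'' clause is a real gap. You could have sidestepped it entirely: for $n=1$, a weighted vertex is precisely one at depth $\geq 2$, every non-left penalty caret is forced to depth $\geq 2$ in \emph{any} penalty tree (the only caret with $v_0 \prec p$ is a left caret), and in a tree whose vertices are exactly left carets at depth $1$ plus penalty carets, the weighted vertices are exactly the non-left penalty carets. Hence any such tree has weight equal to the lower bound and is automatically minimal; there is no need to track weight through the intermediate modifications at all. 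Since you do not make this observation, the depth condition remains an unfilled hypothesis. Finally, you follow the extraneous chain only downward; after pruning $v,\ldots, v+k-1$, the parent $u$ of $v$ may become a leaf that is not a penalty caret (for instance, a left caret whose only child was $v$), a case your modification does not handle.
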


\begin{proof}
It follows from Lemma \ref{lemma:leftcarets} that left carets in
either tree can be assumed to be adjacent to $v_0$ in any minimal
penalty tree. Let ${\mathcal P}$ be a penalty tree for $w=(T_-,T_+)
\in F$ in which all carets in $\PP$ which are left in either $T_-$
or $T_+$ are adjacent to $v_0$ in $\PP$, and suppose that ${\mathcal
P}$ contains a vertex $v_i$ corresponding to a caret $v_i$ in
$(T_-,T_+)$ which is neither a penalty caret nor a left caret.  If
$v_i$ is a leaf in $\PP$, simply delete it. If not, Observation
\ref{obs:not-type-N} implies that the only caret $v$ with $v_i \prec
v$ is the caret immediately following caret $v_i$ in the infix
order. Call this caret $v_{i+1} = v_i+1$. Note that $v_{i+1}$ is not
a left caret, since it is not connected by an edge in $\PP$ to
$v_0$. Since $v_i$ is not a leaf of $\PP$, it follows that the one
and only edge out of $v_i$ on $\PP$ is $v_i \prec v_{i+1}$. Delete
both the edge $v_i \prec v_{i+1}$ and the vertex $v_i$ from
${\mathcal P}$, and attach the vertex $v_{i+1}$ and any subtree of
${\mathcal P}$ having it as a root, as follows.

Since $v_i$ in an interior caret with an exposed right leaf in at
least one of $T_-$ or $T_+$, without loss of generality we assume
this in $T_-$.  It follows that the adjacencies determined by the
actual (not generalized) left edges of $v_i$ and $v_{i+1}$ must
connect them to a single caret $c$ of type $N$. See Figure
\ref{fig:guba-pf} for two possible configurations of these carets.
We use the adjacency $c \prec v_{i+1}$ to reattach the subtree of
${\mathcal P}$ whose root is $v_{i+1}$ to the penalty tree. Thus we
have created a new penalty tree which does not contain the vertex
$v_i$. We can repeat this process until all non-penalty carets of
$(T_-,T_+)$ are removed from ${\mathcal P}$. Hence, $p_1(g)$ is
simply the number of penalty carets in which neither caret in the
pair is a left caret.
\end{proof}

\begin{figure}
\begin{center}
\includegraphics[width=2.5in]{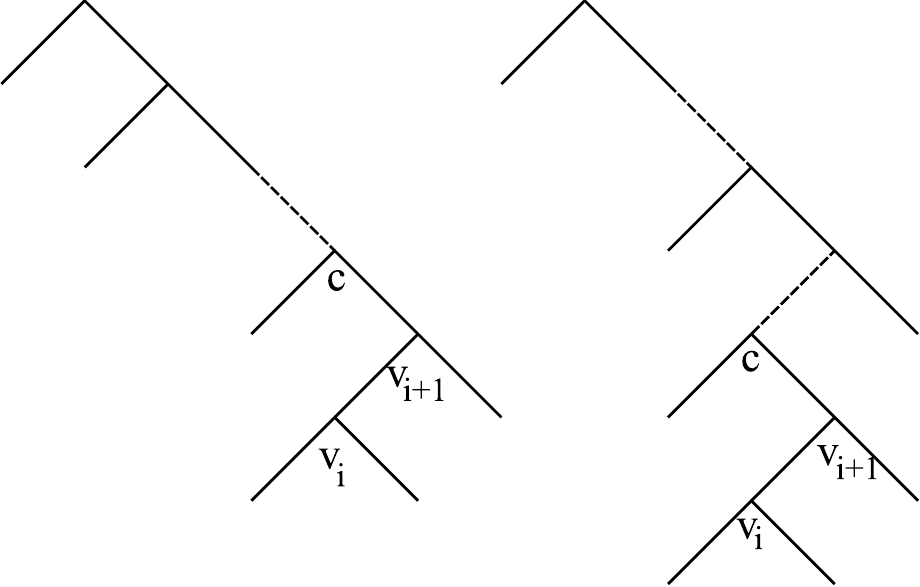}
\caption{Two possible configurations of the carets $c, v_i$ and
$v_{i+1}$ used to show that a penalty tree can always be constructed
using only vertices corresponding to left and penalty carets in
$(T_-,T_+)$ when $n=1$.} \label{fig:guba-pf}
\end{center}
\end{figure}

We conclude this section with two examples.  The first contrasts the
situations $n=1$ and $n > 1$, and the second illustrates the
computation of the word length $l_2(g)$ for a particular group
element $g \in F$.

\begin{example}
We first present an example contrasting the cases $n=1$ and $n > 1$.
We proved above that when $n=1$, a minimal penalty tree for $g \in
F$ can always be constructed using only penalty carets and left
carets. Although one can always construct a penalty tree for $g$
consisting only of penalty and left carets, for $n \geq 2$ this tree
may not be minimal. It may be the case that a penalty tree must
include some non-penalty carets in order to realize $p_n(g)$. The
following example illustrates this.

Consider $g=x_1x_2x_5x_6x_3^{-2}x_2^{-1}$ and the generating set
$X_3 = \{x_0,x_1,x_2,x_3\}$.  This element is depicted in Figure
\ref{fig:pen-trees2}, and we see that $l_{\infty}(g) = 7$.  Since
$g$ can be written as a word $x_3x_1x_2x_3^{-2}x_2^{-1}x_3$ of
length seven, we must have $p_3(g)=0$.  We see that the carets with
infix numbers $1,2,3,5$ and $6$ are penalty carets in the tree pair
diagram for $g$.  It is possible to make a penalty tree for $g$
using only these carets, but that tree will have penalty weight
equal to one.  In order to make a penalty tree with total weight
zero, we must add caret $4$ as a vertex.  These two penalty trees
are drawn in Figure \ref{fig:pen-trees2}.

\begin{figure}
\begin{center}
\includegraphics[width=5in]{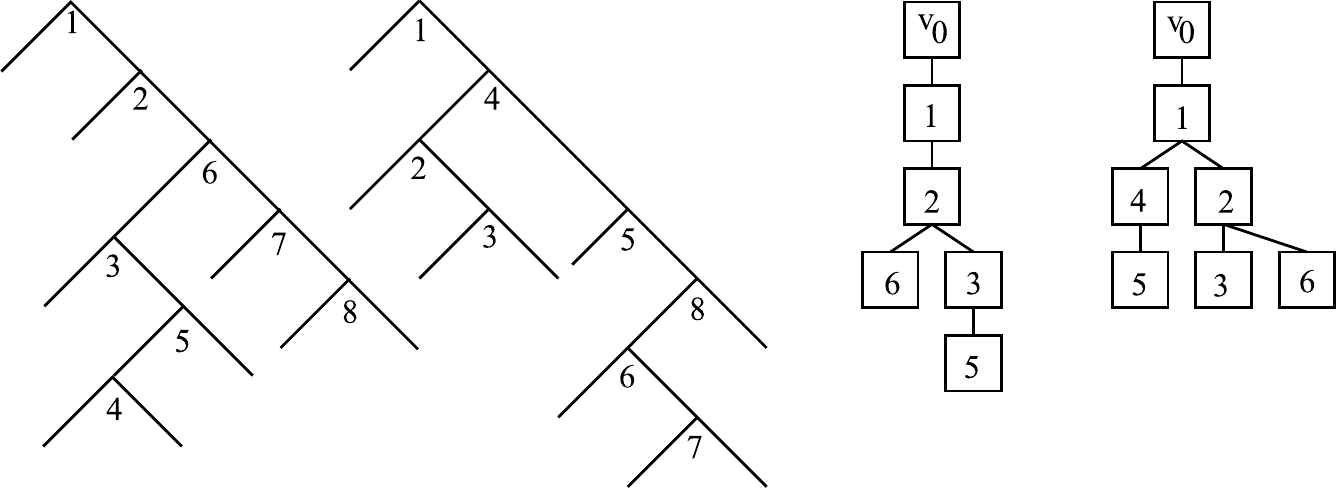}
\caption{The element $g = x_1x_2x_5x_6x_3^{-2}x_2^{-1}$, along with
two penalty trees: a non-minimal one which uses only penalty carets
as vertices, and a minimal one which requires the addition of a
vertex not corresponding to a penalty caret, both with respect to
the generating set $X_3$.} \label{fig:pen-trees2}
\end{center}
\end{figure}
\end{example}

\begin{example}
We now present an example in which we compute the word length of $$g
= x_0x_1^2 x_4 x_5^2 x_8 x_9^2 x_{12} x_{13}^2 x_{14}^{-1}
x_{12}^{-2} x_{10}^{-1} x_8^{-2} x_6^{-1} x_4^{-2} x_2^{-1}
x_0^{-2}$$ with respect to $X_2$. The tree pair diagram for this
element is given in Figure \ref{fig:example}. We see that
$l_{\infty}(g) = 24$, and begin the construction of a minimal
penalty tree $\PP$ by identifying the penalty carets to be those
numbered $1,2,4,5,6,8,9,10,12,13,14$.  We first note that any path
of adjacencies connecting penalty carets with infix numbers greater
than $8$ with $v_0$ must include the vertex $8$, as mentioned in
Observation \ref{O:bottleneck}. This ensures that carets $8$ and
$12$ will correspond to weighted penalty carets in any minimal
penalty tree for $g$, and leads to the construction of the minimal
penalty tree $\PP$ given in Figure \ref{fig:example}. We see that
$p_2(\PP) = p_2(g) = 2$, and compute $l_2(g) = 28$.

\begin{figure}
\begin{center}
\includegraphics[width=5in]{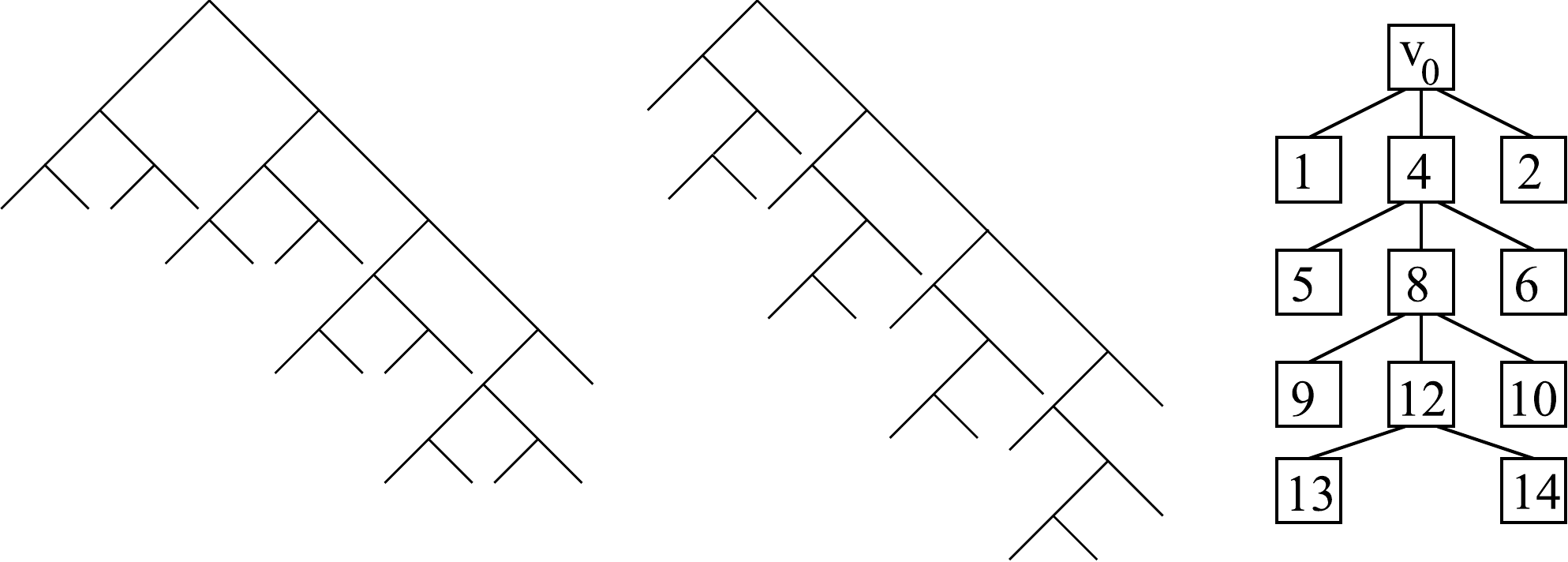}
\caption{The element $g = x_0x_1^2 x_4 x_5^2 x_8 x_9^2 x_{12}
x_{13}^2 x_{14}^{-1} x_{12}^{-2} x_{10}^{-1} x_8^{-2} x_6^{-1}
x_4^{-2} x_2^{-1} x_0^{-2}$, along with a minimal penalty tree for
$g$. With respect to $X_2$, we compute $p_2(g) = 2$, as only carets
$8$ and $12$ are weighted, and thus $l_2(g) = 28$.}
\label{fig:example}
\end{center}
\end{figure}
\end{example}

\subsection{Comparison with known methods when $n=1$}
\label{sec:other-methods}

In the case $n=1$, Fordham \cite{F}, Guba \cite{G}, and Belk and
Brown \cite{BB} have all provided formulas for $l_1(g)$. Our
formula, restricted to the case $n=1$, is seen below to be a
streamlined version of these methods.

Guba \cite{G} considers $F$ as a diagram group, and elements of $F$
are then infinite diagrams.  The cells of a diagram correspond
precisely to the carets in a tree pair diagram which are not right
carets. Furthermore, his {\em special vertices} are precisely our
penalty pairs in which neither caret is a left caret.  Guba computes
word length of an element to be the number of cells in the diagram
plus twice the number of special vertices, corresponding exactly to
our formula above.

It follows from Guba's length formula that we may always form a
minimal penalty tree consisting only of penalty and left carets when
$n=1$, providing an alternate proof of Lemma \ref{lemma:min-tree}.
The example given above shows that this penalty tree may not be
minimal when $n> 1$.

Now we compare our formula with the other two in the literature, due
to Belk and Brown \cite{BB} and Fordham \cite{F}, which are based on
tables of weights corresponding to the different caret types.
Encoded in each table is some of the information that we use when we
tabulate $l_{\infty}(g)$ for $g \in F$.

Belk and Brown \cite{BB} use forest diagrams for elements of $F$
which, roughly, enumerate the right (resp. left) subtrees of the
left (resp. right) carets in each tree, with a pointer to the root.
They define four caret types, and their formula for the word length
of $g \in F$ is $l_0(g)+l_1(g)$, where, translating from forest
diagrams into binary trees, we see that $l_1(g)$ is simply the
number of interior carets in the tree pair diagram. Then $l_0(g)$ is
a sum of weights determined by the caret types with values $0,1$ or
$2$, which are presented in a table. The weights in the first row
and column of their table count the number of left carets in the
tree pair diagram distinct from the root caret, a count which we
include as part of $l_{\infty}(g)$. The remainder of the table has a
weight of two corresponding to each of our penalty carets in which
neither caret is a left caret.  Thus the two formulae are
equivalent.

Blake Fordham \cite{F} defines seven types of carets in a tree and
forms the pairs of caret types analogous to Belk and Brown. He
presents a six by six table of weights corresponding to the pairs of
caret types.  Altering Fordham's table in the following way:
\begin{enumerate}
\item subtract one from the weight of
each pair of caret types containing a single caret which is not a
right caret, and
\item subtract two from the weight of
each pair or caret types containing no right carets,
\end{enumerate}
one obtains a table that has a weight of two for each pair of caret
types which we call a penalty pair, excluding those in which one
caret type is left.  Thus his entire table counts $l_{\infty}(g)$
and the penalty contribution $p_n(g)$ simultaneously.

\subsection{Proof of Theorem \ref{thm:length}}

We rely on the following lemma of Fordham to prove Theorem
\ref{thm:length}.  This lemma gives conditions under which a
function defined from a group $G$ to the nonnegative integers
computes the word length of elements of the group.

\begin{lemma}[\cite{F}, Lemma 3.1.1]\label{lemma:blake}
Given a group $G$, a generating set $X$, and a function $\phi:G
\rightarrow \{0,1,2, \cdots \}$, if $\phi$ has the properties
\begin{enumerate}
\item $\phi(Id_G) = 0$;
\item if $\phi(g) = 0$ then $g = Id_G$;
\item if $g \in G$ and $\alpha$ or $\alpha^{-1}$ is any element of $X$, then $\phi(g)-1
\leq \phi(g\alpha)$; and
\item for any non-identity element $g \in G$, there is at least one
$\alpha\in G$ with either $\alpha$ or $\alpha^{-1}$ in $X$ such that
$\phi(g \alpha) = \phi(g) - 1$,
\end{enumerate}
then $\phi(g) = l(g)$ for all $g \in G$, where $l(g)$ denotes the
word length of $g$ with respect to the generating set $X$.
\end{lemma}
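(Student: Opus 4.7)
The plan is to establish $\phi(g) = l(g)$ by proving the two inequalities $\phi(g) \leq l(g)$ and $l(g) \leq \phi(g)$ separately.

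For the upper bound $\phi(g) \leq l(g)$, the first step is to extract the symmetric consequence of property (3). Applying (3) with $g$ replaced by $g\alpha$ and the generator $\alpha^{-1}$ (which is itself in $X \cup X^{-1}$) yields $\phi(g\alpha) - 1 \leq \phi(g)$, so in fact $|\phi(g\alpha) - \phi(g)| \leq 1$ for every $\alpha \in X \cup X^{-1}$. Then, writing $g = \alpha_1 \alpha_2 \cdots \alpha_k$ as a geodesic word with $k = l(g)$, and proceeding by induction on the prefix length starting from $\phi(Id_G) = 0$ (property (1)), each multiplication by a single generator raises $\phi$ by at most one, yielding $\phi(g) \leq k = l(g)$.

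For the lower bound $l(g) \leq \phi(g)$, I would use property (4) to construct greedily a word expressing $g$. If $g \neq Id_G$, then by property (2), $\phi(g) \geq 1$, and property (4) supplies $\alpha_1$ with $\alpha_1 \in X \cup X^{-1}$ and $\phi(g\alpha_1) = \phi(g) - 1$. Iterating on $g\alpha_1$, $g\alpha_1\alpha_2$, and so on, I would produce generators $\alpha_1, \ldots, \alpha_m$ satisfying $\phi(g\alpha_1 \cdots \alpha_i) = \phi(g) - i$. The process terminates after exactly $m = \phi(g)$ steps, at which point $\phi(g\alpha_1 \cdots \alpha_m) = 0$, and property (2) forces $g\alpha_1 \cdots \alpha_m = Id_G$. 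Rearranging gives $g = \alpha_m^{-1} \cdots \alpha_1^{-1}$, a word of length $\phi(g)$ in the generating set, so $l(g) \leq \phi(g)$.

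Combining the two inequalities gives $\phi(g) = l(g)$. There is no real obstacle here: the only care required is the symmetric reinterpretation of (3) (since the stated version only bounds decreases of $\phi$, not increases) and the bookkeeping that makes the greedy descent terminate exactly at step $\phi(g)$. Conceptually, the lemma formalizes that an integer-valued function satisfying the Lipschitz-type bound (3), the gradient-descent property (4), and the vanishing-only-at-the-identity condition (2) must agree with the Cayley graph distance from the identity.
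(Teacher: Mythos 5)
Your proof is correct. A point worth noting: the paper does not actually supply a proof of this lemma; it cites it directly from Fordham (\cite{F}, Lemma 3.1.1), so there is no ``paper's proof'' to compare against. Your argument is the standard one, and both halves are sound: the symmetrized form of property (3) gives $|\phi(g\alpha)-\phi(g)|\leq 1$, which with property (1) and induction along a geodesic yields $\phi(g)\leq l(g)$; and the greedy descent using property (4) is well-defined at every step because $\phi(g)-i>0$ together with property (1) guarantees $g\alpha_1\cdots\alpha_i\neq Id_G$, so property (4) applies, and the descent terminates at step $m=\phi(g)$ where property (2) forces the product to be the identity, giving $l(g)\leq\phi(g)$.
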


We now prove Theorem \ref{thm:length} by showing that the function
$\phi_n(g) = l_{\infty}(g) + 2 p_n(g)$ for $g \in F$ satisfies the
conditions of this lemma.

\begin{proof}
Define the function $\phi_n(g) = l_{\infty}(g) + 2 p_n(g)$ for $g
\in F$.  We must show that this function satisfies all four
conditions of Lemma \ref{lemma:blake}.  Since the identity is
represented by a tree pair diagram consisting of a single caret in
each tree, it is easy to see that both $l_{\infty}(Id)$ and
$p_n(Id)$ equal zero, and thus the first condition is easily
satisfied.

If $\phi_n(g) = 0$, in particular $\l_{\infty}(g) = 0$, so $g$ the
tree pair diagram for $g$ has no carets which are not right carets.
Thus $g$ is the identity in $F$.

We now state two lemmas which are slight variations on the last two
conditions, and defer their proofs to the next two sections, as they
are somewhat tedious.

\begin{lemma}\label{L:change1}
For every $g\in F$ and $\alpha \in \it{X}_n$, $\phi_n(g\alpha) =
\phi_n(g) \pm 1$.
\end{lemma}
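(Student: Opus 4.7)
The strategy is to decompose the change $\phi_n(g\alpha) - \phi_n(g)$ into a change in $l_\infty$ and a change in $p_n$, handle each piece separately, and invoke a parity argument to force the total to have absolute value exactly $1$. Write $\Delta_1 = l_\infty(g\alpha) - l_\infty(g)$ and $\Delta_2 = p_n(g\alpha) - p_n(g)$, so that $\phi_n(g\alpha) - \phi_n(g) = \Delta_1 + 2\Delta_2$.

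The first step is to verify that $\Delta_1 \in \{-1, +1\}$. This is a direct inspection of the combinatorial rearrangements induced on $(T_-, T_+)$ by each generator (the templates shown in Figure \ref{fig:mult} and their analogs for $x_1$ and $x_i$ with $2 < i \leq n$): in every case exactly one non-right caret is gained or lost after reduction. Since $2\Delta_2$ is even and $\Delta_1$ is odd, the sum $\Delta_1 + 2\Delta_2$ is odd, and in particular nonzero. Thus to prove the lemma it suffices to show $|\Delta_1 + 2\Delta_2| \le 1$, i.e.\ $\Delta_2 \le 0$ when $\Delta_1 = +1$ and $\Delta_2 \ge 0$ when $\Delta_1 = -1$. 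Using the identity $g = (g\alpha)\alpha^{-1}$, both of these statements follow from a single universal upper bound: whenever multiplication by a generator increases $l_\infty$, it does not increase $p_n$.

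To establish this upper bound, I would start from a minimal penalty tree $\PP$ for $g$ and build an explicit penalty tree $\PP'$ for $g\alpha$ with $p_n(\PP') \le p_n(g)$. The plan is a case analysis on $\alpha \in \{x_0^{\pm 1}, \ldots, x_n^{\pm 1}\}$ together with the local structure of $(T_-, T_+)$ at the carets affected by multiplication. In each case: (i) transfer the surviving vertices of $\PP$ to the new diagram, relabelling where multiplication permutes infix numbers; (ii) insert vertices for any newly created penalty carets, using adjacencies present in $g\alpha$ and preferentially routing left carets directly to $v_0$ via Lemma \ref{lemma:leftcarets}; and (iii) delete vertices corresponding to carets that have ceased to be penalty carets. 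The branching structure at non-type-$N$ carets is rigid by Observation \ref{obs:not-type-N}, and Observation \ref{O:bottleneck} further pins down the routing of paths through certain shared right carets, so for most configurations the reconstruction preserves the weight automatically.

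The main obstacle is the bookkeeping in configurations where multiplication simultaneously destroys a type-$N$ configuration at one caret and creates a new one elsewhere. In such situations the set of penalty carets shifts, and the two distances $d_{\PP}(0, v_i)$ and $d_{\PP}(v_i, \ell_i)$ that govern whether a vertex is weighted can change as a consequence of even minor modifications to $\PP$. Maintaining $p_n(\PP') \le p_n(g)$ therefore requires a coordinated rerouting of the portion of $\PP$ near the rearranged subtrees, and the delicate part is verifying that every newly-weighted vertex of $\PP'$ is compensated by the disappearance of a previously-weighted one. This case-by-case compensation is exactly the technical content that is deferred to Section \ref{sec:lemma1}.
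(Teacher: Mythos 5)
Your parity observation and the claim $\Delta_1 \in \{-1,+1\}$ are both correct, and since $\Delta_1 + 2\Delta_2$ is a nonzero odd integer, proving $|\Delta_1 + 2\Delta_2| \le 1$ would indeed suffice. The gap is in the very next step: $|\Delta_1 + 2\Delta_2| \le 1$ is \emph{not} equivalent to ``$\Delta_2 \le 0$ when $\Delta_1 = +1$ and $\Delta_2 \ge 0$ when $\Delta_1 = -1$.'' Unwinding the absolute value, when $\Delta_1 = +1$ the inequality requires $-1 \le \Delta_2 \le 0$, and when $\Delta_1 = -1$ it requires $0 \le \Delta_2 \le 1$. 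Your ``universal upper bound,'' combined with the identity $g = (g\alpha)\alpha^{-1}$, delivers only $\Delta_2 \le 0$ (resp.\ $\Delta_2 \ge 0$) in the two cases, i.e.\ one side of each two-sided inequality. The other side cannot be extracted from the universal bound by the inverse trick: when $\Delta_1 = +1$, multiplying $g\alpha$ by $\alpha^{-1}$ \emph{decreases} $l_\infty$, so the universal bound says nothing about the pair $(g\alpha,\alpha^{-1})$.

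What you are missing is the complementary estimate: when multiplication by a generator increases $l_\infty$, it decreases $p_n$ by at most one (equivalently, when it decreases $l_\infty$, it increases $p_n$ by at most one). This requires a separate construction from the one that establishes the universal bound. The paper normalizes to $l_\infty(g\alpha) = l_\infty(g) - 1$ and then proves two independent inequalities: $p_n(g\alpha) \le p_n(g) + 1$, by starting with a penalty tree for $g$ and deleting or rerouting vertices whose carets disappear or change penalty status (Case 1 even achieves the sharper $p_n(g\alpha) \le p_n(g)$); and $p_n(g) \le p_n(g\alpha)$, by a genuinely different inductive construction $\PP_0, \PP_1, \ldots$ that reinserts the removed carets into a penalty tree for $g\alpha$ and, at each stage, shortens a path to the root to compensate for vertices whose weight is tipped by the insertion. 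Your sketch corresponds to one of these two arguments; without the other, the bound $|\Delta_1 + 2\Delta_2| \le 1$ does not close.
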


\begin{lemma}\label{L:reduce}
For every $g\in F$, there exists $\alpha \in X_n$ such that
$\phi_n(g\alpha) = \phi_n(g) -1$.
\end{lemma}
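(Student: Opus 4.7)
The plan is to produce, for each non-identity $g \in F$ with reduced tree pair diagram $(T_-, T_+)$ and a chosen minimal penalty tree $\PP$, an explicit element $\alpha$ with $\alpha$ or $\alpha^{-1}$ in $X_n$ satisfying $\phi_n(g\alpha) = \phi_n(g) - 1$. Because Lemma \ref{L:change1} already guarantees $\phi_n(g\alpha) = \phi_n(g) \pm 1$ for every such $\alpha$, the content is to rule out the possibility that every available generator increases $\phi_n$, which I would do by pointing to a concrete reducing generator located from the combinatorics of $(T_-,T_+)$ and $\PP$.

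The first step is to locate a \emph{reducible} configuration in $(T_-, T_+)$. When $p_n(g) = 0$, the minimal penalty tree $\PP$ has no weighted vertices, so every branch of $\PP$ is short. Using Lemma \ref{lemma:leftcarets} to assume that all left carets in $\PP$ are attached directly to $v_0$, I would select a leaf $\ell$ of $\PP$ whose supporting caret in $(T_-, T_+)$ is not a right caret, and consider the adjacency in $\PP$ into $\ell$. The combinatorial picture of Figure \ref{fig:mult} shows that right multiplication by the generator $x_i^{\pm 1}$ associated to this adjacency excises $\ell$ from the tree pair diagram: this removes exactly one non-right caret, so $l_\infty$ drops by one, and a straightforward check confirms that $p_n$ does not increase, giving the desired $-1$.

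When $p_n(g) > 0$, I would single out a weighted caret $v$ of maximum $d_\PP(v_0,v)$. By the definition of weighting, $v$ has a descendant leaf at distance at least $n-1$ in $\PP$, and the chain of adjacencies beneath $v$ therefore spans at most $n$ consecutive infix indices, which means the top adjacency into $v$ corresponds to a generator $x_j^{\pm 1}$ with $j \leq n$. I would multiply by this generator and show that the effect is to collapse the chain beneath $v$, demoting $v$ from being weighted while introducing at most one new non-right caret. The typical outcome is $l_\infty(g\alpha) = l_\infty(g)+1$ and $p_n(g\alpha) = p_n(g)-1$, again yielding a net decrease of one in $\phi_n$.

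The main obstacle is verifying these claims about the change in $\phi_n$ with enough precision. This amounts to showing that the minimal penalty tree for $g\alpha$ is genuinely the natural modification of $\PP$ rather than some unrelated configuration of smaller weight, and that the multiplication does not create unexpected penalty carets or new weighted carets further down. Handling this requires a careful case analysis parallel to Fordham's argument in the $n=1$ setting, sorted by the types of the carets near $v$ (left, right-in-both, or type $N$), together with extra bookkeeping to accommodate weighted carets that can lie deep in $\PP$. I expect this analysis, which borrows heavily from the setup in Sections \ref{sec:lemma1} and \ref{sec:lemma2} used for Lemma \ref{L:change1}, to form the bulk of the work.
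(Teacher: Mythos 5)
Your proposal contains a genuine gap that goes to the heart of what makes this lemma hard: you treat adjacencies in a penalty tree as if each one ``corresponds to'' a generator $x_j^{\pm 1}$, so that the adjacency can be directly reversed by that generator. This is not how multiplication works in $F$. Multiplication by $x_j^{\pm 1}$ performs the specific rearrangement of Figure \ref{fig:mult} at level $j+1$ along the \emph{right side of the negative tree $T_-$}; it operates only on the carets $v_1, \ldots, v_{j+2}$ that lie on that right side. An adjacency $p \prec q$ deep in a penalty tree $\PP$ --- say, between two interior carets of type $N$ sitting inside some subtree $A_m$ --- has no generator ``associated to'' it in any useful sense. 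So in your $p_n(g) > 0$ case, where you locate a weighted caret $v$ of maximal $d_\PP(v_0,v)$ and claim you can ``multiply by this generator and collapse the chain beneath $v$,'' there is no generator to multiply by unless $v$ and its chain happen to lie on the right side of $T_-$, and you haven't argued that. The same issue appears in your $p_n(g)=0$ case: a leaf $\ell$ of $\PP$ that is a non-right penalty caret cannot simply be ``excised'' by the generator ``associated to the adjacency into $\ell$.'' (As a side point, your claim that the chain beneath $v$ ``spans at most $n$ consecutive infix indices'' is also false in general, because adjacencies $p \prec q$ may skip arbitrarily many infix indices.)

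The paper's proof sidesteps this entirely by reasoning directly about the right carets $v_1 \prec v_2 \prec \cdots \prec v_j$ of $T_-$ and their left subtrees $A_i$, since these are exactly the carets the generators $x_0, \ldots, x_n$ can reach. It isolates three concrete reducibility criteria --- Observations~\ref{O:exposed}, \ref{O:absence}, and~\ref{O:v2} --- all stated in terms of the $v_i$ and $A_i$ near the top of $T_-$, and then shows by a case analysis on the first few $v_i$ (whether $v_2$ is left in $T_+$, whether $v_2$ lies on a minimal penalty tree, whether $v_2$ is weighted) that at least one criterion always applies. Your split into ``$p_n(g)=0$'' versus ``$p_n(g)>0$'' is a genuinely different decomposition of the problem, but it doesn't give you a handle on a reducing generator, because the penalty tree alone doesn't tell you where the controllable right carets of $T_-$ are. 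You acknowledge at the end that ``verifying these claims'' is the main obstacle --- but that verification is precisely the lemma, and the route you've sketched doesn't supply a mechanism for it.
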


Together with the fact that $\phi_n(g) = 0$ if and only if $g = id$,
Lemma~\ref{L:change1} implies that $\phi_n(g) \leq l_n(g)$ and
Lemma~\ref{L:reduce} implies that $l_n(g) \leq \phi_n(g)$, and hence
Theorem~\ref{thm:length} follows.
\end{proof}

The proofs of Lemmas \ref{L:change1} and \ref{L:reduce} depend
heavily on the combinatorial rearrangement of subtrees of a tree
pair diagram caused by multiplication by a particular generator.
This is illustrated in Figure \ref{fig:mult}.  This figure shows how
the subtrees of the original diagram are rearranged under
multiplication by $x_0$ and $x_2$.  It may be necessary to add
carets to the tree pair diagram to perform this multiplication.  In
general, multiplication by $x_n$ performs the analogous
rearrangement at level $n$ along the right side of the first tree in
the diagram.

Before proving Lemmas \ref{L:change1} and \ref{L:reduce}, we show
that the change in $l_{\infty}$ is easily computed when $g \in F$ is
multiplied by a generator $\alpha=x_i^{\pm 1}$.

We first fix some notation.  Let $(T_-,T_+)$ be the reduced tree
pair diagram for $g \in F$, and $(S_-, S_+)$  the reduced tree pair
diagram for a generator $\alpha=x_i^{\pm 1}$.  The tree pair diagram
for $g\alpha$ is formed by taking (possibly) unreduced
representatives $(T_-',T_+')$ of $g$ and $(S_-',S_+')$ of $\alpha$
in which $S_+'=T_-'$.  The (possibly unreduced) tree pair diagram
for $g \alpha$ is then given by $(S_-',T_+')$.  Careful examination
reveals that this process results in three mutually exclusive
situations, and in each case we can keep track of the difference
between $l_{\infty}(g)$ and $l_{\infty}(g \alpha)$.
\begin{obs}\label{O:multbygen}
The multiplication described above results in exactly one of the
following situations:
\begin{enumerate}
\item $S_+$ is not a subtree of $T_-$, so $T_+' \neq T_+$.
This implies that $(S_-',T_+')$ must be a reduced tree pair diagram
for $g\alpha$, and that $l_{\infty}(g\alpha)=l_{\infty}(g)+1$.
\item $S_+$ is a subtree of $T_-$, so $T_+'=T_+$, and $(S_-',T_+)$ is a
reduced tree pair diagram for $g\alpha$. In this case, the change in
$l_{\infty}$ depends on $\alpha$:
\begin{enumerate}
\item If $\alpha=x_i^{-1}$, then $l_{\infty}(g\alpha)=l_{\infty}(g)+1$.
\item If $\alpha=x_i$, then $l_{\infty}(g\alpha)=l_{\infty}(g)-1$.
\end{enumerate}
\item $S_+$ is a subtree of $T_-$, so $T_+'=T_+$, and $(S_-',T_+)$ is not
a reduced tree pair diagram for $g\alpha$, then
$l_{\infty}(g\alpha)=l_{\infty}(g)-1$.
\end{enumerate}
\end{obs}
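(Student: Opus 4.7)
The plan is to run a direct case analysis on the standard tree pair multiplication procedure, guided by the local rearrangement pictures in Figure \ref{fig:mult}. The three cases are manifestly exhaustive and mutually exclusive: either $S_+$ fails to embed into $T_-$ (case (1)), or it embeds and $(S_-', T_+)$ is reduced (case (2)), or it embeds but $(S_-', T_+)$ admits a reduction (case (3)). So the task reduces to verifying the claimed change in $l_\infty$ in each case. Throughout I would use that $l_\infty(g)$ counts carets that are not right carets of the reduced diagram, and that multiplication by $x_i^{\pm 1}$ only affects carets along the right side of the negative tree at level $i$.

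For case (1), since $S_+$ does not appear as a subtree of $T_-$, some carets must be appended along the right spine of $T_-$ at or below level $i$ in order to produce an unreduced representative $(T_-',T_+')$ with $T_-' \supseteq S_+$. Each such caret forces an added caret at the same exposed leaf of $T_+$. I would check that after forming $(S_-',T_+')$ and reducing, exactly one added caret survives on $T_+'$ and that this caret is not a right caret of the reduced product, giving $l_\infty(g\alpha)=l_\infty(g)+1$. The key point is that the new caret in $T_+'$ is attached along a left edge of the rearranged portion, so it is interior.

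For case (2), no expansion of $T_-$ is needed and the product pair is already reduced. Reading off Figure \ref{fig:mult}: multiplication by $x_i$ takes the shape $S_+$ in $T_-$ and collapses two right subtrees into one, removing one non-right caret from the negative tree, so $l_\infty$ drops by $1$; multiplication by $x_i^{-1}$ does the inverse rearrangement, inserting one new non-right caret into $S_-'$, so $l_\infty$ increases by $1$. This handles (2a) and (2b). For case (3), the unreduced pair $(S_-',T_+)$ admits at least one caret cancellation. Since the new carets introduced by the $x_i^{-1}$-rearrangement of case (2a) are precisely what create the opportunity to cancel against an existing caret in $T_+$, the net effect is one cancellation of a non-right caret from each tree, yielding $l_\infty(g\alpha)=l_\infty(g)-1$.

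The main obstacle is the bookkeeping of which carets are right and which are non-right before and after the rearrangement, and checking that the ``added'' and ``cancelled'' carets are of the correct type. This depends mildly on whether $i=0$, $i=1$, or $i\geq 2$, because the shapes of $S_-$ and $S_+$ vary with $i$. In each subcase, however, the local picture in Figure \ref{fig:mult} together with the fact that $(T_-,T_+)$ was reduced is enough to pin down the type of each caret involved, so the verification reduces to inspection.
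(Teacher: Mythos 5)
The paper does not actually supply a proof of this observation; it is introduced with the phrase ``Careful examination reveals\ldots'' and treated as a routine check, so there is no paper argument to compare against line by line. Your three-way case split and the decision to track non-right carets tree by tree is of course the right skeleton, but the accounting in cases (1) and (3) contains genuine errors rather than mere vagueness.

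In case (1) you claim that exactly one added caret survives on $T_+'$ and is interior, attributing the increase of one to $T_+'$. That is false when $\alpha = x_i^{-1}$: there $S_+$ is the all-right string of $i+2$ carets, so the carets appended to $T_-$ to absorb $S_+$ go straight down the right spine, and the corresponding carets appended to $T_+$ go straight down \emph{its} right spine as well. Hence $T_+'$ gains only right carets and contributes nothing to $l_\infty$; the increase of one occurs entirely in $S_-'$, where the rotation moves the level-$(i+1)$ right caret to an interior (or, for $i=0$, left) position. The book-keeping for $\alpha = x_i$ is different again: here one of the carets added to $T_-$ is interior (the left child of the level-$(i+1)$ right caret), the rotation returns it to the right spine in $S_-'$, so $S_-'$ is $l_\infty$-neutral, and the $+1$ really does come from $T_+'$, because the caret added at that left leaf lands at an interior position of $T_+'$. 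Your proposal conflates these two mechanisms and in particular does not distinguish $x_i$ from $x_i^{-1}$. You should also argue, not assume, that $(S_-',T_+')$ is reduced in case (1); the phrase ``after forming $(S_-',T_+')$ and reducing'' suggests you have not noticed that no reduction can occur here and that this requires justification.

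In case (3) you only describe the $\alpha = x_i^{-1}$ mechanism, but $\alpha = x_i$ can also land in case (3). More importantly, the reduction may remove more than one caret pair: if a run of subtrees $A_k = \cdots = A_i = \emptyset$, a whole string $v_k, \ldots, v_{i+2}$ peels off. Your ``net effect is one cancellation of a non-right caret from each tree'' accounts only for the first removed pair; you still must check that each subsequent removed pair is right in both $S_-'$ and $T_+$ (hence $l_\infty$-neutral), which is what makes the total change exactly $-1$. This is true, but it is exactly the kind of step that needs to be spelled out, and your sketch skips it.
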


Since $l_{\infty}(g)$ is an important part of $\phi_n(g)$, the above
observation will play a major role in the proof of Theorem
\ref{thm:length}.

\section{Proof of Lemma~\ref{L:change1}}
\label{sec:lemma1} We now prove Lemma \ref{L:change1}, which states
that multiplication by any generator in $X_n$ or its inverse changes
the value of $\phi_n(g)$ by either $1$ or $-1$.  Recall that $g =
(T_-,T_+)$.

\begin{proof}
 First note that since $l_\infty(g\alpha)$ and $l_\infty(g)$ always differ by $1$,
we may assume without loss of generality that $l_\infty(g\alpha) =
l_\infty(g) - 1$.  To see why, assume that Lemma~\ref{L:change1}
holds whenever we have $l_\infty(g\alpha) = l_\infty(g)-1$, and
consider a pair $g$ and $\alpha$ with $l_\infty(g\alpha) =
\l_\infty(g)+1$. Set $h = g\alpha$ and $\beta = \alpha^{-1}$.  Then
$l_\infty(h\beta) = l_\infty(h)-1$, so Lemma~\ref{L:change1} holds
for $h \in F$ and the generator $\beta$. Therefore, $\phi_n(g) =
\phi_n(h\beta) = \phi_n(h) \pm 1 = \phi_n(g\alpha) \pm 1$, and thus
$\phi_n(g\alpha) = \phi_n(g) \pm 1$.

Let $g\in F$ and $\alpha \in X_n$.  Without loss of generality, we
now assume that $l_\infty(g\alpha) = l_\infty(g)-1$. It will suffice
to prove that $p_n(g\alpha) = p_n(g)$ or $p_n(g)+1$. We split the
proof into two cases depending on the exponent of $\alpha$.

\underline{{\bf Case 1:}} $\alpha = x_i^{-1}$.  In the tree pair
diagram $(S_-,S_+)$ for $\alpha$, the tree $S_+$ consists entirely
of a string of $i+2$ right carets. Notice that we must be in Case 3
of Observation~\ref{O:multbygen}, in which $S_+$ is a subtree of
$T_-$. Thus $T_-$ also has at least $i+2$ right carets.  In $T_-$,
let $v_1 < v_2 < \cdots < v_i < v_{i+1}< v_{i+2}$ be the infix
numbers of the first $i+2$ right carets, beginning with the root
caret.  As a separate subtree, this set of right carets has $i+3$
leaves, each of which may have a subtree of $T_-$ attached to it.
Let $A_j$ be the (possibly empty) subtree attached to the left leaf
of caret $v_j$, for $1 \leq j \leq i+2$.  Let $A_{i+3}$ be the
(possibly empty) subtree attached to the right leaf of caret
$v_{i+2}$.  Note that multiplication by $x_i^{-1}$ affects caret
$v_{i+1}$, rotating it from the right side of the tree to the
interior (or left in the case $i=0$. See Figure \ref{fig:lemma8} for
a diagram of $(S_-,S_+)$ and $(T_-,T_+)$.

\begin{figure}
\begin{center}
\includegraphics[width=5in]{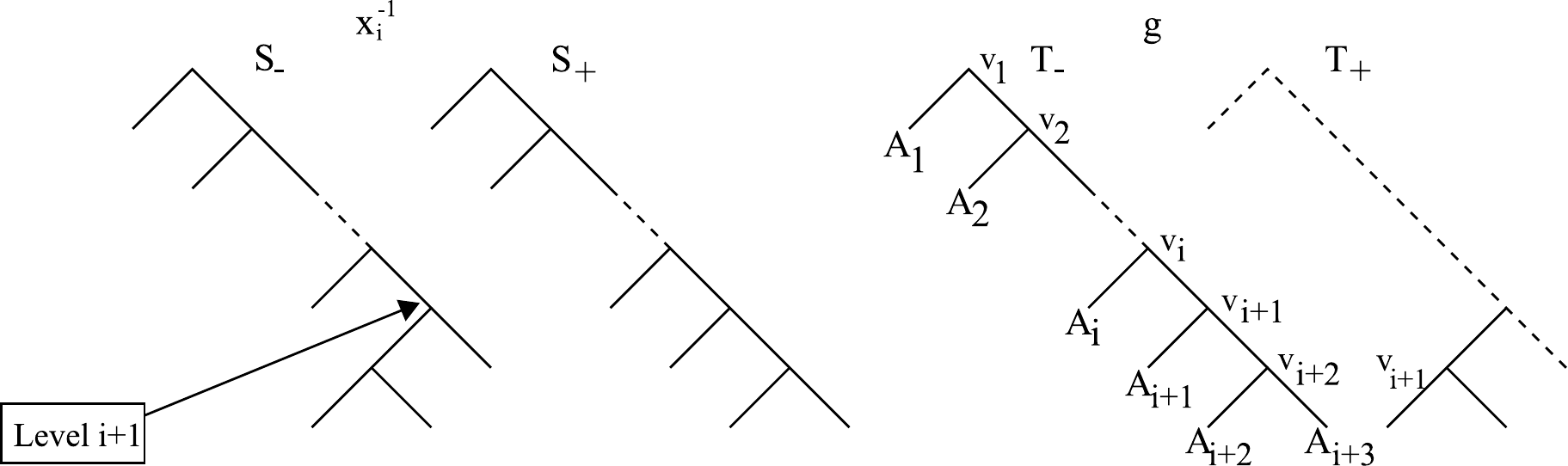}
\caption{Multiplication of $g = (T_-,T_+)$ by $\alpha = x_i^{-1} =
(S_-,S_+)$. We use dashed carets in $T_+$ to indicate that we do not
know {\em a priori} the exact shape of this tree, except for the
fact that $v_{i+1}$ is an interior caret of the given form.}
\label{fig:lemma8}
\end{center}
\end{figure}

Since we are in Case 3 of Observation \ref{O:multbygen},
multiplication of $(T_-,T_+)$ by $x_i^{-1} = (S_-,S_+)$ must create
an interior caret which is removed when the pair $(S'_-,T_+)$ is
reduced.  Thus we must have $A_{i+1}=A_{i+2}=\emptyset$, and that
caret $v_{i+1}$ is an exposed interior caret in $T_+$.  In addition,
if $A_{i+3}$ is also empty in $T_-$, then $v_{i+2}$ will also be
removed when the product $(S_-',T_+)$ is reduced.  Furthermore, if
for some $1 \leq k \leq i$, the subtrees $A_k, A_{k+1}, \ldots, A_i$
of $T_-$ are all empty, then carets $v_k, v_{k+1}, \ldots, v_i$ will
also all be removed when the product $(S_-',T_+)$ is reduced.

This removal of carets may cause certain other carets to alter their
penalty status, that is, penalty carets for $g$ may not be penalty
carets for $g\alpha$.  If $v_{i+1}$ is the only caret which is
removed by the reduction, then caret $v_i$ may change from being a
penalty caret for $g$ to not being a penalty caret
 for $g\alpha$. If more carets are removed during the reduction,
say $v_k, v_{k+1}, \ldots, v_{i+2}$ for $1 \leq k \leq i+1$, then
caret $v_{k-1}$ will switch from being a penalty caret in $g$ to a
non-penalty caret in $g\alpha$.

Suppose $\PP$ is any penalty tree for $g$. We claim that we can
always create a new tree $\PP'$ which is a penalty tree for
$g\alpha$ with $p_n(\PP') \leq p_n(\PP)$, which would imply that
$p_n(g\alpha) \leq p_n(g)$. There are two reasons that might prevent
$\PP$ itself from being a penalty tree for $g\alpha$:
\begin{itemize}
\item $\PP$ may contain vertices corresponding to carets in the
reduced tree pair diagram $(T_-,T_+)$ for $g$ which no longer appear
in the reduced tree pair diagram for $g \alpha$, or
\item there may be a leaf in $\PP$ corresponding to a penalty caret in
$(T_-,T_+)$ which is no longer a penalty caret in reduced tree pair
diagram for $g\alpha$.
\end{itemize}

Let us first consider the case that only the caret $v_{i+1}$ is
removed when $(S'_-,T_+)$ is reduced, and we describe how to alter
$\PP$ to create $\PP'$.

\begin{enumerate}
\item If $v_{i+1}$ does not appear as a vertex in $\PP$, and either $v_i$ does
not change penalty status as we go from $g$ to $g \alpha$, or $v_i$
does change penalty status, but is not a leaf in $\PP$, let
$\PP'=\PP$.

\item Suppose that $v_{i+1}$ does not appear as a vertex of $\PP$, $v_i$ does change penalty status,  and
$v_i$ is a leaf on $\PP$. In this case we form $\PP'$ by simply
removing the leaf $v_i$ and the edge connecting it to the tree, as
well as any newly exposed leaves which do not correspond to penalty
carets.

\item Suppose that $v_{i+1}$ does appear as a vertex of $\PP$.  We know that in
$g = (T_-,T_+)$, caret $v_{i+1}$ is not a penalty caret, since it is
a right caret in $T_-$ and an interior caret with no right subtree
in $T_+$.  Thus it cannot be a leaf of $\PP$, which forces $\PP$ to
have vertices $p$ and $q$ with $p \prec v_{i+1} \prec q$ for some
carets $p$ and $q$. But since $v_{i+1}$ is a right caret in $T_-$
with both $A_{i+1}$ and $A_{i+2}$ empty, and $v_{i+1}$ is an exposed
caret $T_+$, its generalized left and right edges are just the
actual left and right edges, so there is only one such caret $p$ and
one caret $q$, and hence $p=v_i$ and $q=v_{i+2}$. Construct $\PP'$
by removing the vertex $v_{i+1}$ from $\PP$, and adding the edge
$v_i \prec v_{i+2}$, since this adjacency exists in the reduced tree
pair diagram corresponding to $(S'_-,T_+)$ after caret $v_{i+1}$ is
removed. In this way, $v_i$ is not a leaf of $\PP'$ so its penalty
status, or any change therein, is irrelevant.
\end{enumerate}
In each case above, it is clear that $p_n(\PP') \leq p_n(\PP)$.

If more than one caret is removed when the tree pair diagram
$(S'_-,T_+)$ is reduced, say the string of carets $v_k, v_{k+1},
\ldots, v_{i+2}$ for some $1 \leq k \leq i+1$, the situation is
actually simpler. In this case carets $v_{k-1}, v_k, \ldots, v_i$
are all penalty carets for $g$, because they are right carets in
both trees, and are not the final caret in the diagram. Thus they
must appear as vertices of $\PP$, and using
Observation~\ref{O:bottleneck} one concludes that they must appear
in $\PP$ as a path $v_{k-1} \prec v_k \prec \cdots \prec v_i$, with
the vertex $v_i$ as the leaf.

In this case, we take $\PP'$ to be the tree $\PP$ with the string of
vertices from $v_{k-1}$ through $v_i$  removed.  It is possible that
some leaf of this tree which is created by the removal of these
vertices corresponds to a caret which is no longer a penalty caret
for $g \alpha$. In this case, this leaf may be removed, and the
resulting tree is a penalty tree for $g \alpha$.  It is clear again
that $p_n(\PP') \leq p_n(\PP)$, which implies that $p_n(g\alpha )
\leq p_n(g)$.

We now reverse the procedure outlined above to show that $p_n(g)
\leq p_n(g\alpha)$; namely we begin with a penalty tree $\PP'$ for
$g \alpha$ and describe how to alter it to obtain a penalty tree
$\PP$ for $g$ with $p_n(\PP) \leq p_n(\PP')$.  One of three things
may occur:
\begin{enumerate}
\item $\PP'$ may already be a penalty tree for $g$,
\item if caret $v_i$ changed penalty status between $g \alpha$ and
$g$, it may need to be added as a vertex of the penalty tree, if it
was not on $\PP'$, or
\item if caret $v_{k-1}$ changed penalty status between $g \alpha$ and
$g$, for some $k$ with $1 \leq k \leq i+1$, in which case the carets
 $v_k, \ldots, v_i$ were not present in the reduced tree pair
 diagram for $g \alpha$, the entire string of carets $v_{k-1} \prec v_k \prec \cdots \prec v_i$ may need to be
 added to form a penalty tree for $g$.
\end{enumerate}
Assume we are not in case (1), so we do need to add some of these
carets to $\PP'$. If we simply add the desired string of carets to
$\PP'$ to form $\PP$, we may increase the penalty quite a bit, but
the vertices of the tree which contribute to this increase must lie
on a path in $\PP'$ between $v_0$ and $p$, where $p$ is the caret at
the top of the newly added string. If any of these vertices do
become weighted, we alter the tree again in such a way that the only
vertices which are weighted in the new penalty tree but not weighted
in $\PP'$ must now lie between $v_0$ and some other vertex $q$,
where $q$ is closer to $v_0$ than $p$ was, and continue if necessary
until there are no more vertices which might switch from being
unweighted in $\PP'$ to being weighted in the altered tree $\PP$.

More precisely, to construct $\PP$, we will inductively construct a
series of trees $\PP'=\PP_0,\PP_1, \ldots \PP_{r}$ associated with
carets $v_{j_1}, \ldots, v_{j_r}$, a certain subset of the carets
$\{v_1, \ldots, v_{k-2}$, where $j_r < j_{r-1} \cdots < j_1$. For
each $r \geq 1$,  $\PP_r$ is a penalty tree for $g$, $\PP_r$
contains vertices corresponding to all carets $v_k$ where $j_r \leq
k \leq i$, and $j_r$ is the largest index $k$ with $k < j_{r-1}$ and
$v_k$  on $\PP_{r-1}$. In addition, either:
\begin{enumerate}
\item $p_n(\PP_{r}) \leq p_n(\PP')$, or

\item $p_n(\PP_{r}) > p_n(\PP')$ , $d_{\PP_{r}}(v_0,
v_{j_r})=d_{\PP'}(v_0, v_{j_r}) > j_r$, the reason that $p_n(\PP_r)$
exceeds $p_n(\PP')$ is that there are vertices along the path from
$v_0$ to $v_{j_r}$ in $\PP'$ which count towards $p_n(\PP_{r})$ but
not towards $p_n(\PP')$, and $v_i$ is always the leaf at maximal
distance from vertex $v_{j_r}$ in $\PP_r$.

\end{enumerate}

In the first case we take $\PP=\PP_{r}$, and in the second case, we
must construct $\PP_{r+1}$. But since $0 < j_r \leq i$ and $j_{r+1}
< j_r$, eventually case 1 above will occur, since $v_0 \in \PP'$ and
$d_{\PP'}(v_0,v_0)=0$. Hence, we can construct a penalty tree $\PP$
for $g$ with $p_n(\PP) \leq p_n(\PP')$, which implies that $p_n(g)
\leq p_n(g\alpha)$.

To complete the argument, we must show that the construction of
penalty trees $\PP_r$ satisfying the properties above is possible.
We first describe the construction of $\PP_1$. Let $j_1$ be the
largest index $j$ for which $v_j$ appears on the penalty tree
$\PP'$. Then we can attach a string of $i-j_1$ vertices and edges
corresponding to $v_{j_1} \prec \ldots \prec v_i$ to $\PP'$ to form
$\PP_{1}$, which is then a penalty tree for $g$. The added vertices
themselves will never be weighted, since $i<n$, but it is possible
that their addition might cause other unweighted vertices to become
weighted. Either this does not occur, so $p_n(\PP_{1}) \leq
p_n(\PP')$, or it does occur, so $p_n(\PP_{1}) > p_n(\PP')$, but
this only happens if the distance in $\PP'$ between vertices $v_0$
and $v_{j_1}$ satisfies $d_{\PP'}(v_0,v_{j_1})
> j_1$.  Moreover, it is only vertices along the path from $v_0$ to $v_{j_1}$
which may be weighted in $\PP_{1}$ but not in $\PP'$. Furthermore,
if there was a leaf of $\PP'$ further from $v_{j_1}$ than $v_i$ is,
then appending the new path to $v_i$ would not increase the total
penalty.

For the inductive step, suppose the penalty tree $\PP_{r-1}$ has
been constructed, and  $p_n(\PP_{r-1}) > p_n(\PP')$. Furthermore,
$v_i$ is the leaf at maximal distance from $v_{j_{r-1}}$ in
$\PP_{r-1}$, and the reason that $p_n(\PP_{r-1})$ exceeds $
p_n(\PP')$ is that there are vertices along the path from $v_0$ to
$v_{j_{r-1}}$ in $\PP_{r-1}$ which are weighted in $\PP_{r-1}$ but
not in $\PP'$. Then we construct $\PP_r$ as follows. Choose $j_r$ to
be the largest index $j$ with $0 \leq j < j_{r-1}$ so that $v_j$
corresponds to a vertex of $\PP_{r-1}$ (or equivalently, of $\PP'$).
Delete the first edge along the path connecting $v_{j_{r-1}}$ to
$v_0$ in $\PP_{r-1}$, and attach to $v_{j_r}$ the vertices and edges
corresponding to $v_{j_r} \prec \ldots \prec v_{(j_{r-1})-1}$, and
then add an edge connecting $v_{j_{r-1}-1}$ to $v_{j_{r-1}}$.  The
result, $\PP_{j_r}$, is clearly an allowable tree for $g$. Since
$v_i$ was the most distant leaf from $v_{j_{r-1}}$ in $\PP_{r-1}$,
$v_i$ is also the most distant leaf from $v_{j_{r-1}}, \ldots,
v_{j_r}$ in $\PP_r$. Now in $\PP_{r-1}$, only vertices between $v_0$
and $v_{j_{r-1}}$ may be weighted in $\PP_{r-1}$ but not in $\PP'$,
so deleting the edge connected to $v_{j_{r-1}}$ eliminates that
difference in penalty. None of the vertices between $v_{j_r}$ and
$v_{j_{r-1}}$ are close enough to a leaf to count towards $p_n$,
since they are too close to $v_i$, and $v_i$ is the most distant
leaf. Therefore, $p_n(\PP_{r}) \leq p_n(\PP')$ unless
$d_{\PP_{j_r}}(v_0, v_{j_r}) > j_r$, and then only vertices between
$v_0$ and $v_{j_r}$ can account for this increase in penalty. This
completes the desired construction, and thus the proof that $p_n(g)
\leq p_n(g\alpha)$.

Summing up, in this case where $\alpha=x_i^{-1}$, we have shown that
$p_n(g\alpha) \leq p_n(g)$ and $p_n(g) \leq p_n(g \alpha)$, and
hence $p_n(g)=p_n(g\alpha)$.

\underline{{\bf Case 2:}} $\alpha = x_i$. When $\alpha = x_i$ and we
are assuming that $l_{\infty}(g \alpha) = l_{\infty}(g) -1$, we must
be in either Case 3 or Case 2b of Observation~\ref{O:multbygen}.

To obtain the tree pair diagram for $\alpha = x_i$, we switch the
order of the trees given for $\alpha = x_i^{-1}$ in Figure
\ref{fig:lemma8}.  Thus $S_-$ is a tree consisting of a string of
$i+2$ right carets, and $S_+$ has a single caret which is not a
right caret: this caret is an interior caret if $i >0$ and a left
caret if $i=0$.

Since we are not in case 1 of Observation \ref{O:multbygen}, $S_+$
is a subtree of $T_-$. This guarantees an interior caret in $T_-$
which is the left child of the right caret at level $i$ from the
root.   As in Case 1, let $v_1 \prec v_2 \prec \cdots \prec v_i
\prec v_{i+2}$ be the first $i+1$ right carets in $T_-$, and let
$v_{i+1}$ be the interior caret hanging from the left leaf of caret
$v_{i+2}$. Number the leaves of the subtree consisting of the
$\{v_i\}$ from $1$ through $i+3$, and let $A_j$ be the (possibly
empty) subtree attached to leaf $j$.

If we are in Case 2(b) of Observation~\ref{O:multbygen} in which the
pair $(S'_-,T_+)$ is a reduced tree pair diagram, then there are two
carets which may change penalty status, as opposed to one in Case 3
of Observation~\ref{O:multbygen}.  In either case, the adjacency
$v_i \prec v_{i+2}$ which is present in $g = (T_-,T_+)$ may not
exist in the reduced tree pair diagram for $g \alpha$.

We claim first that $p_n(g\alpha) \leq p_n(g) + 1$, and begin our
argument by choosing a penalty tree $\PP$ for $g$. Below we
summarize the possible situations, which are not mutually exclusive,
which might force us to alter $\PP$ to obtain a penalty tree $\PP'$
for $g\alpha$.
\begin{enumerate}
\item $\PP$ contains the edge corresponding to $v_i \prec v_{i+2}$,
an adjacency present in $g$ but not in $g \alpha$, and the tree pair
diagram $(S'_-,T_+)$ is reduced. (Case 2(b) of
Observation~\ref{O:multbygen}.)
\item Caret $v_{i+1}$ is not a penalty caret for $g$, but is for
$g\alpha$, and the tree pair diagram $(S'_-,T_+)$ is reduced. (Case
2(b) of Observation~\ref{O:multbygen} and these conditions also
require that $A_{i+2} = \emptyset, A_{i+3} \neq \emptyset$, and
caret $v_{i+1}$ is a right caret which is not type $N$ in $T_+$.)
\item There is a single caret which is a penalty caret for $g$,
but no longer is one for $g\alpha$. This occurs as follows:
\begin{enumerate}
\item In either Case 2b of Observation~\ref{O:multbygen}, or Case 3
of Observation~\ref{O:multbygen} if exactly one caret is removed
when $(S'_-,T_+)$ is reduced, it may be the case that $v_i$ is a
penalty caret for $g$ but not for $g \alpha$.  This occurs if
$A_{i+1} = \emptyset$ and $v_i$ is a left or interior caret which is
not type $N$ in $T_+$.
\item In Case 3 of Observation~\ref{O:multbygen},
if carets $v_k, \cdots v_{i+1}, v_{i+2}$ are removed when
$(S'_-,T_+)$ is reduced, for some $0 \leq k \leq i+1$, then
$v_{k-1}$, if it exists, always changes from being a penalty caret
for $g$ to a non-penalty caret for $g \alpha$.
\end{enumerate}
\end{enumerate}

We describe a method for altering a penalty tree $\PP$ for $g$ into
a penalty tree for $g \alpha$ depending on which combination of the
above situations occurs.

Suppose first that the first situation does occur. Then either
$v_{i+1}$ is a vertex on $\PP$, or it is not. If $v_{i+1}$ is
already a vertex on $\PP$, then we delete both the edge
corresponding to $v_i \prec v_{i+2}$ as well as the edge along the
path from $v_0$ to $v_{i+1}$ which goes into $v_{i+1}$.  We
reconnect the tree by adding two edges corresponding to the
adjacency $v_i \prec v_{i+1} \prec v_{i+2}$. The resulting tree
$\PP'$ has vertices for all penalty carets for $g\alpha$.

We now claim that $p_n$ can increase by at most 1, and show this by
considering the distance from each vertex of the penalty tree to a
leaf of the penalty tree.  Recall that weighted  carets, that is,
those which count towards $p_n(\PP)$, are connected to the root of
the tree by a path of length at least two, and a leaf of the tree by
a path of length at least $n-1$.

In altering $\PP$ in this way to obtain $\PP'$, there are two carets
which might become weighted penalty carets. First,  it may be that
$v_{i+1}$ was not a weighted caret for $\PP$ but is weighted in
$\PP'$, since now all of the leaves which are connected by paths to
$v_{i+2}$ become leaves connected to $v_{i+1}$ also. This can happen
only if $d_{\PP}(v_{i+2},l) \geq n-2$ where $l$ is a leaf of $\PP$
at maximal distance from $v_{i+2}$. Second, it is possible that
there is a vertex $v$ along the path from $v_0$ to $v_i$ in $\PP$
which is not far enough from a leaf of $\PP$ to be weighted, yet
altering the tree by the addition of the edges $v_i \prec v_{i+1}
\prec v_{i+2}$ may now make this caret weighted. But this can happen
only if both $d_{\PP}(v_0, v) \geq 2$ and a leaf $l$ of $\PP$ which
has maximal distance from $v_{i+2}$ has $d_{\PP}(v,l)=n-2$. But this
implies that $d_{\PP}(v_{i+2},l) \leq n-3$.  Since these conditions
are mutually exclusive, we see that at most one of them can occur,
so $p_n(\PP') \leq p_n(\PP)+1$.

If, on the other hand, caret $v_{i+1}$ does not correspond to a
vertex of $\PP$, the situation is simpler. Simply delete the edge
$v_i \prec v_{i+2}$, and add a new vertex labeled $v_{i+1}$ along
with the edges $v_i \prec v_{i+1} \prec v_{i+2}$.  Again, remove
leaves as necessary until all remaining leaves correspond to penalty
carets of the tree pair diagram for $g \alpha$.  The resulting
penalty tree $\PP'$ for $g \alpha$ again satisfies $p_n(\PP') \leq
p_n(\PP)+1$.

Now if situation (2) also occurs, no additional alteration of the
penalty tree $\PP'$ is required, since $v_{i+1}$ is already on it.
Although situation (3a) may also occur, since $v_i$ is not a leaf of
$\PP'$, it does not concern us that it may no longer be a penalty
caret. However it is possible that some leaves of $\PP'$ may no
longer correspond to penalty carets in the reduced tree pair diagram
for $g \alpha$, since we may have created a new leaf when we removed
edges of $\PP$. Then we simply remove non-penalty leaves from $\PP'$
until all leaves do correspond to penalty carets.  This can never
increase $p_n(\PP')$.  Thus, $\PP'$ is a penalty tree for $g\alpha$
with $p_n(\PP') \leq p_n(\PP)+1$.

Now suppose that situation (1) above does not occur, but situation
(2) does. This implies that we are once again in Case 2(b) of
Observation~\ref{O:multbygen}, and hence the adjacency $v_i \prec
v_{i+2}$ is not present in $T_+$, which implies that $v_i \prec
v_{i+2}$ no longer holds for $g\alpha$. Therefore, since we assumed
that situation (1) does not occur, the edge $v_i \prec v_{i+2}$ does
not occur in $\PP$.  However, since $A_{i+3} \neq \emptyset$, it
follows that $v_{i+2}$ is a penalty caret for $g$, and hence must
appear on $\PP$.  However, the facts that $A_{i+2}= \emptyset$ and
$v_{i+1}$ is a right caret in $T_+$ which is not type $N$ imply that
the only two carets $v$ with $ v \prec v_{i+2}$ in $g$ are $v_i$ and
$v_{i+1}$. Since situation (1) does not occur,  the edge $v_{i+1}
\prec v_{i+2}$ is forced to exist in $\PP$, so caret $v_{i+1}$,
though not a penalty caret for $g$, was nonetheless already on
$\PP$.  Now if situation (3a) occurs, and $v_i$ is a leaf of $\PP$,
simply delete it. Continue to delete any non-penalty leaves from
$\PP$ to form a penalty tree $\PP'$ for $g\alpha$ with $p_n(\PP')
\leq p_n(\PP)$.

Finally, suppose that neither situations 1 nor 2 occur, but
situation 3 does. We must then be either in Case 2(b) or Case 3 of
Observation~\ref{O:multbygen}. First we consider what happens if we
are in case 2(b) of Observation~\ref{O:multbygen}. Then the only
reason $\PP$ might not be a penalty tree for $g \alpha$ is that
caret $v_i$ corresponds to a leaf of $\PP$, but $v_i$ is not a
penalty caret for $g \alpha$.  In this case, to form $\PP'$, we
delete the vertex corresponding to $v_i$ as well as any additional
leaves which no longer correspond to penalty carets in $g \alpha$.
The resulting tree satisfies $p_n(\PP') \leq p_n(\PP)$.

If we are in Case 3 of Observation~\ref{O:multbygen}, then some
carets are removed when the tree pair diagram $(S'_-,T_+)$ for $g
\alpha$ is reduced.  If these carets appear in $\PP$, we must delete
them when forming $\PP'$.  Once again,
Observation~\ref{O:bottleneck} reveals that these carets, if they
appear in $\PP$, appear as a string $v_k \prec \ldots \prec  v_i$ of
vertices, with $v_i$ as a leaf of the tree, and no other edges on
the tree out of any of these vertices.  Thus they can be easily
deleted, along with the vertex corresponding to caret $v_{k-2}$ if
necessary, to produce a penalty tree $\PP'$ for $g \alpha$ with
$p_n(\PP') \leq p_n(\PP)$.

Thus in all of these situations, we can always construct a penalty
tree $\PP'$ for $g \alpha$ with $p_n(\PP') \leq p_n(\PP)$, and it
follows that $p_n(g\alpha) \leq p_n(g)+1$.

We now prove that if we begin with a penalty tree $\PP'$ for $g
\alpha$, we can always alter it to construct a penalty tree $\PP$
for $g$ with $p_n(\PP) \leq p_n(\PP')$. If we are in Case 3 of
Observation~\ref{O:multbygen}, we must add vertices corresponding to
the carets $v_k,v_{k+1} \ldots, v_i$ to $\PP'$ to form $\PP$.  We do
this using the same inductive procedure used in Case 1 of the proof
of this lemma.

If we are in case 2(b) of Observation~\ref{O:multbygen}, there are
two possible situations to consider.
\begin{enumerate}
\item Caret $v_{i+1}$ is a penalty caret for $g\alpha$, but not for $g$.
This happens if $A_{i+2}=\emptyset$, caret $v_{i+1}$ is a right
caret in $T_+$ which is not type $N$, and $A_{i+3} \neq \emptyset$.
\item Caret $v_i$ is a  not a penalty caret for $g\alpha$,
but is a penalty caret for $g$. This occurs if $A_{i+1} = \emptyset$
and $v_i$ is a left or interior caret in $T_+$ which is not type
$N$.
\end{enumerate}
If the second situation above does not occur, or it does but $v_i$
corresponds to a vertex already on $\PP'$, then constructing $\PP$
from $\PP'$ requires only deleting any leaves which no longer
correspond to penalty carets in $g$.  This process cannot increase
the penalty weight of the tree.  If the second situation does occur,
and $v_i$ does not correspond to a vertex of $\PP'$, we again use
the inductive procedure from the first case of the proof of this
lemma to construct the desired penalty tree $\PP$ for $g$ containing
a vertex corresponding to $v_i$. Hence, $p_n(g) \leq p_n(g\alpha)$,
which in turn implies that in this case, either $p_n(g
\alpha)=p_n(g)$ or $p_n(g \alpha)=p_n(g)+1$, as desired.
\end{proof}

\section{Proof of Lemma~\ref{L:reduce}}
\label{sec:lemma2}

Before embarking on the proof itself, we gather together a few cases
in which $\phi_n(g\alpha)=\phi_n(g)-1$. We will show that any $g\in
F$ falls into at least one of these situations for some choice of
$\alpha$. As usual, we let $(T_-,T_+)$ be the reduced tree pair
diagram for $g$, and let $v_1 \prec v_2 \prec v_3 \prec \ldots \prec
v_j$ be all of the right carets in $T_-$ , and we let $A_k$ be the
(possibly empty) subtree attached to the left leaf of $v_k$ for $1
\leq k \leq j$. All of these observations essentially follow from
the proof of Lemma~\ref{L:change1}, and we supply details following
the statements below.

\begin{obs}\label{O:exposed}
For $0\leq i \leq n$, if $l_{\infty}(gx_i^{-1})=l_{\infty}(g)-1$,
then $\phi_n(gx_i^{-1})=\phi_n(g)-1$. This occurs precisely when
$T_-$ contains at least $i+2$ right carets,
$A_{i+1}=A_{i+2}=\emptyset$ in $T_-$, and caret $v_{i+1}$ is exposed
in $T_+$.
\end{obs}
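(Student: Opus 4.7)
The plan is to extract this observation from pieces already established: the classification of multiplication in Observation \ref{O:multbygen} and the $\alpha = x_i^{-1}$ case of Lemma \ref{L:change1}. There are two assertions to verify: that the decrease in $l_\infty$ forces the claimed decrease in $\phi_n$, and that the decrease in $l_\infty$ is equivalent to the stated combinatorial conditions on the tree pair diagram $(T_-,T_+)$.

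First I would handle the ``precisely when'' characterization. Since $\alpha = x_i^{-1}$ has negative exponent, Case 2(b) of Observation \ref{O:multbygen} does not apply, so a drop of $1$ in $l_\infty$ forces us into Case 3: $S_+$ must be a subtree of $T_-$, and the pair $(S'_-,T_+)$ must reduce. The condition that $S_+$ be a subtree of $T_-$ is precisely that $T_-$ contain at least $i+2$ right carets beginning at the root. For the product to reduce, the multiplication must produce a matching pair of exposed carets. Multiplication by $x_i^{-1}$ rotates caret $v_{i+1}$ off the right side of $T_-$ to an interior position in $S'_-$, as in Figure \ref{fig:lemma8}. This rotated caret has two exposed leaves on the negative side if and only if the subtrees $A_{i+1}$ and $A_{i+2}$ are empty in $T_-$, and matching it to an exposed caret in $T_+$ requires that caret $v_{i+1}$ also be exposed in $T_+$. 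Conversely, if all three combinatorial conditions hold, the product falls into Case 3 of Observation \ref{O:multbygen} and $l_\infty$ drops by $1$.

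For the main implication, Case 1 of the proof of Lemma \ref{L:change1} treated exactly the situation $\alpha = x_i^{-1}$ with $l_\infty(g\alpha) = l_\infty(g) - 1$, and through the inductive construction of penalty trees $\PP_r$ concluded that $p_n(g\alpha) = p_n(g)$. Once this equality of penalty weights is in hand, the definition of $\phi_n$ immediately gives
\[
\phi_n(g\alpha) \;=\; l_\infty(g\alpha) + 2p_n(g\alpha) \;=\; \bigl(l_\infty(g) - 1\bigr) + 2p_n(g) \;=\; \phi_n(g) - 1,
\]
which is the asserted equality.

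The main technical obstacle, namely showing the invariance of $p_n$ under multiplication by $x_i^{-1}$ in this regime, is already fully settled in the proof of Lemma \ref{L:change1}. What remains here is only a routine translation between the three cases of Observation \ref{O:multbygen} and the explicit combinatorial conditions on $(T_-,T_+)$, together with the two-line arithmetic consolidating the statement about $\phi_n$. No new machinery beyond what was developed in Section \ref{sec:lemma1} should be needed.
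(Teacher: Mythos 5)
Your proposal is correct and follows essentially the same route as the paper: the paper likewise observes that this situation is exactly Case~1 of the proof of Lemma~\ref{L:change1}, where $p_n(gx_i^{-1})=p_n(g)$ was established, and that the ``precisely when'' characterization is just Case~3 of Observation~\ref{O:multbygen} unwound for $\alpha=x_i^{-1}$. Your write-up spells out the equivalence between the $l_\infty$ drop and the combinatorial conditions somewhat more explicitly than the paper does, but there is no substantive difference in approach.
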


\begin{obs}\label{O:absence}
For $0\leq i \leq n$, if $T_-$ contains at least $i+2$ right carets
and $A_{i+1} \neq \emptyset$, and there is a minimal penalty tree
$\PP$ for $g$ not containing $v_i \prec v_{i+1}$, then $\phi_n(gx_i)
= \phi_n(g)-1$.
\end{obs}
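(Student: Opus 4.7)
The plan is to verify $\phi_n(gx_i) = \phi_n(g) - 1$ by bounding its two components $l_\infty$ and $p_n$ separately, and then appealing to Lemma~\ref{L:change1} to rule out the only other admissible value.

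First I would show that the hypotheses place the multiplication $gx_i$ into Case~2(b) or Case~3 of Observation~\ref{O:multbygen}. The positive tree $S_+$ of the generator $x_i$ consists of $i+1$ right carets along the right spine together with one non-right caret (interior when $i>0$, left when $i=0$) attached at the left leaf of the bottom right caret. The assumptions that $T_-$ contains at least $i+2$ right carets and that $A_{i+1}\neq \emptyset$ together guarantee that $S_+$ is a subtree of $T_-$, so the multiplication requires no added carets, ruling out Case~1 of Observation~\ref{O:multbygen}. Since $\alpha = x_i$ is a positive generator, Case~2(a) is excluded as well, leaving Case~2(b) or Case~3, each of which yields $l_\infty(gx_i) = l_\infty(g) - 1$.

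Second I would take the hypothesized minimal penalty tree $\PP$ for $g$ that avoids the edge $v_i \prec v_{i+1}$ and apply the construction in the proof of Lemma~\ref{L:change1}, Case~2, to produce a penalty tree $\PP'$ for $gx_i$ with $p_n(\PP') \leq p_n(\PP)$. That proof isolates three situations in which $\PP$ must be altered, but only Situation~(1) --- the presence in $\PP$ of the edge $v_i \prec v_{i+2}$ in the Case~2 relabelling, which is precisely the edge $v_i \prec v_{i+1}$ in our consecutive-right-caret labelling --- can raise the penalty weight by one. The hypothesis excludes this edge, so none of the permissible alterations increase the weight; the modifications dictated by Situations~(2) and (3) only add or delete vertices whose net effect is to preserve or decrease $p_n$. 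Hence $p_n(gx_i) \leq p_n(\PP') \leq p_n(\PP) = p_n(g)$.

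Combining the two bounds gives $\phi_n(gx_i) \leq \phi_n(g) - 1$. Since Lemma~\ref{L:change1} forces $\phi_n(gx_i) \in \{\phi_n(g) - 1,\, \phi_n(g) + 1\}$, the only consistent value is $\phi_n(gx_i) = \phi_n(g) - 1$. The main obstacle will be the bookkeeping in the second step: one must verify, following the construction in the Case~2 proof of Lemma~\ref{L:change1}, that the absence of the forbidden edge really does suffice to prevent any of the secondary phenomena (changes in penalty status of $v_i$ or $v_{i+1}$, newly exposed leaves of $\PP'$, or the string of carets deleted during reduction when we are in Case~3 of Observation~\ref{O:multbygen}) from introducing weighted vertices in $\PP'$. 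This amounts to tracking exactly which vertices of $\PP$ are preserved, relabelled, or removed, but it introduces no new ideas beyond those already developed in Section~\ref{sec:lemma1}.
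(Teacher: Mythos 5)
Your proposal is correct and follows essentially the same route as the paper: both reduce the observation to Case~2 of the proof of Lemma~\ref{L:change1}, with the key point being that the hypothesis that $\PP$ avoids $v_i \prec v_{i+1}$ (the edge $v_i \prec v_{i+2}$ in that proof's labelling) is precisely what rules out Situation~(1), the only situation there that can raise $p_n$. The one cosmetic difference is that the paper directly concludes $p_n(g) = p_n(gx_i)$ by combining both inequalities established in Case~2, whereas you derive only the one-sided bound $p_n(gx_i) \le p_n(g)$ and close the argument by invoking the $\pm 1$ dichotomy of Lemma~\ref{L:change1}.
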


\begin{obs}\label{O:v2}
If there is a minimal penalty tree $\PP$ for $g$ in which the caret
$v_2$ is a weighted caret, then $\phi_n(gx_0^{-1})=\phi_n(g)-1$.
\end{obs}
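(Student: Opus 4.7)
My plan is to reduce to Observation \ref{O:exposed} with $i=0$, which asserts that $\phi_n(gx_0^{-1})=\phi_n(g)-1$ whenever $l_\infty(gx_0^{-1})=l_\infty(g)-1$, and characterizes this drop in $l_\infty$ as equivalent to the structural conditions: $T_-$ has at least two right carets, $A_1=A_2=\emptyset$ in $T_-$, and caret $v_1$ is an exposed interior caret in $T_+$. It thus suffices to verify these three conditions from the hypothesis that $v_2$ is a weighted caret in some minimal penalty tree $\PP$ for $g$.

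The first condition is automatic because $v_2$ is by definition the second right caret of $T_-$. For the other two, my approach is contrapositive: assume one of them fails and construct a penalty tree $\PP'$ for $g$ with either strictly smaller weight than $\PP$, or the same weight but with $v_2$ no longer weighted. Either conclusion contradicts the hypothesis that $v_2$ is weighted in a minimal penalty tree. The starting facts are that, by Lemma \ref{lemma:leftcarets}, $v_2$ is not a left caret in either tree, and by the definition of weighted, $d_\PP(v_0,v_2)\geq 2$ while some leaf $l$ in $\PP$ satisfies $d_\PP(v_2,l)\geq n-1$. If $A_2\neq\emptyset$ in $T_-$, then caret $v_1+1$ is the root of $A_2$, an interior caret in the right subtree of $v_1$, making $v_1$ of type $N$ in $T_-$; moreover, the rightmost caret $u$ of $A_2$ supplies a new adjacency $u\prec v_2$, and this alternative route into $v_2$ permits tree surgery on $\PP$ that no longer weights $v_2$. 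A parallel argument applies to $A_1\neq\emptyset$ and to the case that caret $v_1$ fails to be an exposed interior caret in $T_+$: in each case a new adjacency is produced that provides a lighter route to $v_2$ in some penalty tree.

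The main obstacle is the combinatorial bookkeeping required for the tree surgeries: at each altered vertex, one must verify that neither the root-distance nor the leaf-distance conditions defining the weighted set are broken for any other caret in the tree. The style of argument parallels Case 2 of the proof of Lemma \ref{L:change1}, where an inductive construction moves the affected vertices progressively closer to $v_0$ until all potential increases in penalty are absorbed, guaranteeing that the resulting tree $\PP'$ is a valid penalty tree of weight at most $p_n(\PP)$. Once all three structural conditions are in hand, Observation \ref{O:exposed} immediately closes the argument.
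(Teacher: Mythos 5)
Your proposal rests on a false premise: you try to show that the hypothesis (there is a minimal penalty tree in which $v_2$ is weighted) forces the structural conditions of Observation~\ref{O:exposed} with $i=0$, namely $A_1=A_2=\emptyset$ in $T_-$ and $v_1$ exposed in $T_+$. In fact the hypothesis is \emph{incompatible} with those conditions. If $A_1=A_2=\emptyset$ and $v_1$ is exposed in $T_+$, then $v_1$ and $v_2$ are consecutive in infix order with $v_1$ the leftmost caret of $T_+$, which forces $v_2$ to be a left caret in $T_+$; by Lemma~\ref{lemma:leftcarets}, a left caret is never weighted in any minimal penalty tree. So the hypothesis of Observation~\ref{O:v2} guarantees that we are \emph{not} in the case $i=0$ of Observation~\ref{O:exposed}, and in particular $l_\infty(gx_0^{-1})=l_\infty(g)+1$, not $l_\infty(g)-1$. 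Your reduction therefore cannot go through: you are trying to establish conditions that the hypothesis actively rules out.

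The paper's argument goes the other way. Since $l_\infty$ goes \emph{up} by one, the work is to show $p_n$ goes \emph{down}: starting from a minimal penalty tree $\PP$ for $g$ with $v_2$ weighted, replace the edge into $v_2$ with the edge $v_0 \prec v_2$ (available since $v_0$ is adjacent to $v_2$ when $v_2$ is a right caret in $T_-$). In the resulting tree $\PP'$, which is a valid penalty tree for $gx_0^{-1}$, $v_2$ is at distance one from $v_0$ and hence no longer weighted, so $p_n(gx_0^{-1}) \le p_n(\PP') \le p_n(\PP)-1 = p_n(g)-1$. Then $\phi_n(gx_0^{-1}) \le (l_\infty(g)+1) + 2(p_n(g)-1) = \phi_n(g)-1$, and equality follows from Lemma~\ref{L:change1}. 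There is also a secondary logical slip in your contrapositive: exhibiting a minimal penalty tree in which $v_2$ is unweighted does not contradict the \emph{existence} of another minimal penalty tree in which $v_2$ is weighted, so only the ``strictly smaller weight'' branch of your intended dichotomy would yield a contradiction.
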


The first two observations follow directly from the proof of
Lemma~\ref{L:change1}. Observation~\ref{O:exposed} falls into case 1
of the proof of  Lemma~\ref{L:change1}, and notice that in this case
we actually proved that $p_n(g)=p_n(gx_i^{-1})$, which implies
$\phi_n(gx_i^{-1})=\phi_n(g)-1$.  Now for
Observation~\ref{O:absence}, since the generator $\alpha=x_i$, we
look to case 2 of the proof. But the fact that there is a penalty
tree $\PP$ for $g$ not containing the edge $v_i \prec v_{i+1}$
corresponds to Situation 1 of the proof not occurring (Note that the
caret labeling is not the same as in the proof). As long as
situation 1 does not occur, $p_n(g)=p_n(gx_i)$.

Observation~\ref{O:v2} can be established by a similar type of
argument.  Note that the situation in Observation ~\ref{O:v2} is
distinct from the case $i=0$ in Observation~\ref{O:exposed}, for if
$v_1$ were exposed in $T_+$ and $A_1=A_2=\emptyset$ in $T_-$, then
$v_2$ must be a left caret in $T_+$, and thus is not a weighted
penalty caret. Hence, in the situation of Observation ~\ref{O:v2},
$l_\infty(gx_0^{-1}) = l_\infty(g) +1$. Given a minimal penalty tree
$\PP$ for $g$ in which $v_2$ is weighted,  we can construct a caret
tree $\PP'$ for $gx_0^{-1}$ by replacing the edge into $v_2$ by the
edge $v_0 \prec v_2$. In $\PP'$, $v_2$ is not weighted, and so
$p_n(\PP') \leq p_n(\PP)-1$, and hence $p_n(gx_0^{-1})\leq
p_n(g)-1$. This implies that $\phi_n(gx_0^{-1}) = \phi_n(g)-1$.

\begin{proof}[Proof of Lemma~\ref{L:reduce}]
Let $g \in F$ be represented by the reduced tree pair diagram
$(T_-,T_+)$. As usual, we let $v_1 \prec v_2 \prec \cdots\prec v_j$
be the right carets in $T_-$, and let $A_k$ be the (possibly empty)
subtree attached to the left leaf of $v_k$ for $1 \leq k \leq j$. We
proceed by analyzing two cases based on the number of right carets
in $T_-$ and the infix numbers of the penalty carets.

{\bf Case 1:} Either $T_-$ has at most $n+1$ right carets, or $T_-$
has more than $n+1$ right carets, caret $v_{n+1}$ is not a penalty
caret and there are no penalty carets above $v_{n+1}$ in the infix
ordering.

First, if $T_-$ consists entirely of right carets, then $T_+$ must
have an exposed caret $v_k$ where $k \neq j$, or else the tree pair
diagram would not be reduced.  But we claim $1 \leq k \leq n+1$, for
if $v_k$ is exposed in $T_+$ for $k > n+1$, then $v_{k-1}$ would be
a penalty caret with $k-1 \geq n+1$, contradicting the conditions of
this case. But then $\phi_n(gx_{k-1}^{-1}) = \phi_n(g)-1$ by
Observation~\ref{O:exposed}.

If $T_-$ has some carets which are not right, let $i$ be the
greatest index such that $A_i \neq \emptyset$. So $i \leq n+1$,
since neither $v_{n+1}$ nor caret beyond it are penalty carets.  Now
$v_{i-1}$ is type $N$ in $T_-$, but $v_i, v_{i+1}, \ldots , v_j$ are
all right carets which are not type $N$. Hence, in $T_+$, one of
$v_{i}, \ldots, v_j$ must not be a right caret, else the tree pair
diagram is not reduced. If there is some penalty caret at or beyond
$v_i$, then it must either be of type $N$ or a right caret in $T_+$,
and hence one of $v_{i}, \ldots, v_j$ must have type $N$ in $T_+$.
Let $v_k$ be the highest (in infix order) type $N$ caret in $T_+$;
since there are no penalty carets at or beyond $v_{n+1}$, $i \leq k
\leq n $. Then this implies that caret $v_{k+1}$ is an exposed caret
in $T_+$, and $i+1 \leq k+1 \leq n+1$, which implies by
Observation~\ref{O:exposed} that $x_{k}^{-1}$ reduces $\phi_n$.  If,
on the other hand, there are no penalty carets at or beyond $v_i$,
then $v_{i-1} \prec v_i$ is not on any caret tree for $g$, so by
Observation~\ref{O:absence}, $x_{i-1}$ reduces $\phi_n$.

{\bf Case 2:}  $T_-$ has at least $n+2$ right carets and there are
penalty carets at or above $v_{n+1}$ in the infix ordering.

In this case, if for some $0\leq i \leq n$, $A_{i+1} \neq \emptyset$
and there is a minimal penalty tree $\PP$ for $g$ not containing the
edge $v_i\prec v_{i+1}$, then by Observation~\ref{O:absence},
$\phi(gx_i) = \phi(g) -1$.  Furthermore, if $v_2$ is weighted in
some minimal penalty tree $\PP$ for $g$, then by
Observation~\ref{O:v2}, $\phi_n(gx_0^{-1}) = \phi_n(g)-1$.

So, we may assume that for every minimal penalty tree $\PP$ for $g$,
$v_2$ is not a weighted caret and for each $0 \leq k \leq n$ such
that $A_{k+1} \neq \emptyset$,  $\PP$ contains the edge $v_k \prec
v_{k+1}$. We split into subcases; in each subcase we will show that
Observation~\ref{O:exposed} applies for some $i$.

{\bf Subcase 2.1:} $v_2$ is a left caret in $T_+$.

In this subcase, Observation~\ref{O:exposed} applies with $i=0$. To
see this, first note that $A_2=\emptyset$, for if not, then every
minimal penalty tree for $g$ must contain the edge $v_1 \prec v_2$.
The proof of Lemma \ref{lemma:leftcarets} implies that we can always
construct a minimal penalty tree for $g$ which contains the edge
$v_0 \prec v_2$ and does not contain the edge $v_1 \prec v_2$.
Therefore $A_2=\emptyset$, and hence $v_1$ and $v_2$ are consecutive
carets with $v_2$ a left caret in $T_+$. So in $T_+$, $v_1$ is not a
caret of type $N$ or a right caret, and recall that in $T_-$, caret
$v_1$ is not of type $N$, so $v_1$ is not a penalty caret.
Furthermore, $v_1 \prec v_2$ is the only edge out of $v_1$. But this
implies that $A_1=\emptyset$, for if not, then by assumption all
minimal trees $\PP$ realizing $p_n(g)$ contain the edge $v_0 \prec
v_1$. Given such a $\PP$, $v_1$ cannot be a leaf since it is not a
penalty caret, so $\PP$ also must contain $v_1 \prec v_2$. Then
alter $\PP$ by removing $v_1$ along with both edges $v_0 \prec v_1$
and $v_1 \prec v_2$, and adding the edge $v_0 \prec v_2$, obtaining
a penalty tree $\PP'$ not containing $v_0 \prec v_1$ with $p_n(\PP')
\leq p_n(\PP)$. So $A_1=A_2=\emptyset$, $v_1$ must be a left caret
in $T_+$, and Observation~\ref{O:exposed} applies with $i=0$ to show
that $\phi_n(gx_0^{-1}) = \phi_n(g)-1$.

{\bf Subcase 2.2:} $v_2$ is not left in $T_+$, and $v_2 \notin \PP$
for some minimal penalty tree $\PP$ for $g$.

In this subcase, Observation~\ref{O:exposed} applies with $i=1$. To
see this, first note that $v_2 \notin \PP$ which implies that $v_1
\prec v_2 \notin \PP$ and hence that $A_2 = \emptyset$. Also, $v_2
\notin \PP$ implies that $v_2$ is not a penalty caret, which implies
that $v_2$ cannot have type $N$ in $T_-$, and hence $A_3 =
\emptyset$.

Moreover, since $v_2$ is not a penalty caret, it follows that $v_2$
is an interior caret in $T_+$ which is not of type $N$.  This
implies that $v_1$ is either of type $N$ in $T_+$ or is an interior
caret which is not of type $N$.  We claim that $v_1$ must be of type
$N$.  Suppose $v_1$ is interior, but not of type $N$.  It follows
that $A_1 \neq \emptyset$, which implies by our assumption that
$\PP$ contains the edge $v_0 \prec v_1$.  We know that $v_1$ is not
a penalty caret because it is not type $N$ in either tree, and is an
interior caret in $T_+$.  Thus $v_1$ is not a leaf of $\PP$, so
there must be some edge out of $v_1$ in $\PP$. The only possible
edge out of $v_1$ is $v_1 \prec v_2$, which means $v_2 \in \PP$, a
contradiction. Therefore $v_1$ has type $N$ in $T_+$, which in turn
implies that $v_2$ is exposed in $T_+$, and so by
observation~\ref{O:exposed}, $\phi_n(gx_1^{-1}) = \phi_n(g)-1$.

{\bf Subcase 2.3:} $v_2$ is not a left caret in $T_+$, and for every
minimal penalty tree $\PP$ for $g$, $v_2\in \PP$ but $v_2$ is not
weighted.

Choose a minimal penalty tree $\PP$ for $g$. Since $v_2$ is neither
a left caret nor weighted, it follows that $d_{\PP}(v_2,l) < n-1$
for all leaves $l$ of $\PP$. Now note that if all edges $v_k \prec
v_{k+1}$ for $2 \leq k \leq n$ are on $\PP$, then $d_{\PP}(v_2,
v_{n+1}) = n-1$, so $v_2$ would be at least distance $n-1$ from some
leaf of $\PP$. So let $i$ be the smallest index such that $v_i \prec
v_{i+1}$ is not on $\PP$. Hence, $A_{i+1} = \emptyset$, for
otherwise $v_i \prec v_{i+1}$ would be on $\PP$ by the conditions of
Case (2).  Note that $A_{i+1} = \emptyset$ means that the carets
$v_i$ and $v_{i+1}$ are consecutive in infix order.

Since $v_{i-1} \prec v_i$ is on $\PP$, $v_i \in \PP$. We claim that
$v_i$ must have type $N$ in $T_+$, otherwise $v_i \prec v_{i+1}$ is
the only possible edge out of $v_i$, so $v_i$ is a leaf of $\PP$.
But then $v_i$ must be a penalty caret, so must be a right caret in
$T_+$.  Since there must be some penalty caret $v$ beyond $v_i$, and
$v_i$ is a right caret in both trees, by
Observation~\ref{O:bottleneck}, the path in $\PP$ connecting $v$ to
$v_0$ must pass through $v_i$, contradicting the fact that $v_i$ is
a leaf of $\PP$. So $v_i$ has type $N$ in $T_+$, which implies that
$v_i \prec v_{i+1}$ is the only possible edge into $v_{i+1}$, so
$v_{i+1} \notin \PP$, so $v_{i+1}$ is not a penalty caret, and thus
must be an interior caret in $T_+$ which is not of type $N$, hence
exposed in $T_+$. Also, since $v_{i+1}$ is not a penalty caret, it
cannot have type $N$ in $T_-$, and hence $A_{i+2} = \emptyset$. So,
by observation~\ref{O:exposed}, $\phi_n(gx_i^{-1}) = \phi_n(g)-1$.
\end{proof}

\section{$(F,X_n)$ is not almost convex}
\label{sec:notAC}

A finitely generated group $G$ is {\em almost convex(k)}, or $AC(k)$
with respect to a finite generating set $X$ if there is a constant
$L(k)$ satisfying the following property.  For every positive
integer $n$, any two elements $x$ and $y$ in the ball of radius $n$
with $d_X(x,y) \leq k$ can be connected by a path of length $L(k)$
which lies completely within this ball.  Cannon, who introduced this
property in \cite{C}, proved that if a group $G$ is $AC(2)$ with
respect to a generating set $X$ then it is also $AC(k)$ for all $k
\geq 2$ with respect to that generating set. Thus if a group is
$AC(2)$, it is called {\em almost convex} with respect to that
generating set.  If a group is almost convex with respect to {\em
any} generating set, then we simply call it almost convex, omitting
the mention of a generating set.

There are interesting examples of families of groups with and
without this property.  Groups which are almost convex with respect
to any generating set include hyperbolic groups \cite{C} and
fundamental groups of closed 3-manifolds whose geometry is not
modeled on {\em Sol} \cite{SS}.  Moreover, amalgamated products of
almost convex groups retain this property \cite{C}.  Groups which
are not almost convex include fundamental groups of closed
3-manifolds whose geometry is modeled on {\em Sol} \cite{CFGT} and
the solvable Baumslag-Solitar groups $BS(1,n)$ \cite{MS}.

Almost convexity is a property which depends on generating set; this
was proven by Thiel using the generalized Heisenberg groups
\cite{T}.
 Cleary and Taback prove in \cite{CT} that Thompson's group
$F$ is not almost convex with respect to the standard generating set
$X_1=\{x_0,x_1\}$, but this has no implications for the convexity of
the group with respect to other generating sets.  Below we prove
that $F$ is not almost convex with respect to any consecutive
generating set $X_n = \{x_0,x_1, \cdots ,x_n\}$.   The proof below
follows the outline of \cite{CT}.

\begin{thm}\label{thm:notAC}
Thompson's group $F$ is not almost convex with respect to the
generating set $X_n = \{x_0,x_1, \cdots ,x_n\}$ .
\end{thm}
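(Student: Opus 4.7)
The plan is to adapt the strategy of \cite{CT} using the length formula of Theorem~\ref{thm:length}. For each integer $L \geq 1$, I want to exhibit a pair of elements $u_L, v_L \in F$ satisfying $l_n(u_L) = l_n(v_L) = N_L$ for some $N_L$, and $d_{X_n}(u_L, v_L) = 2$, while every path from $u_L$ to $v_L$ that lies entirely in the ball of radius $N_L$ about the identity has length at least $L$. This will show that no single constant $L(2)$ works, so $(F,X_n)$ fails to be almost convex.

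First I would construct the pair by specifying reduced tree pair diagrams. For $u_L$, take the negative tree $T_-$ with a long right spine $v_1 \prec v_2 \prec \cdots \prec v_j$, with $j$ much larger than $L$, each $v_i$ decorated by a small interior subtree $A_{i+1}$ hanging from its left leaf. The positive tree $T_+$ mirrors this structure so that every $v_i$ in the spine becomes a penalty caret, and by Observation~\ref{O:bottleneck} the minimal penalty tree is forced to be a single long path through the spine, producing a large penalty weight. Define $v_L = u_L \cdot s \cdot t$ where $s, t$ are two generators from $X_n^{\pm 1}$ which perform a small combinatorial rearrangement near the root of the spine, with signs chosen so that $l_n(u_L s) = N_L + 1$ while $l_n(v_L) = N_L$.

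Next I would apply Theorem~\ref{thm:length} to verify that both $u_L$ and $v_L$ achieve length exactly $N_L$, and then analyze an arbitrary path from $u_L$ to $v_L$ lying in the ball of radius $N_L$. By Lemma~\ref{L:change1} each edge of such a path shifts $\phi_n$ by $\pm 1$, and since the only vertex on the direct length-two path between them has length $N_L+1$ and thus lies outside the ball, the path must strictly decrease $\phi_n$ before returning to length $N_L$ at $v_L$. Using Lemma~\ref{L:reduce} together with Observations~\ref{O:exposed}, \ref{O:absence} and \ref{O:v2}, I would classify the length-decreasing multiplications available at any element whose tree pair diagram inherits the spine structure: the only such moves are ``unraveling'' multiplications that peel a single caret off the top of the spine or contract one of the decorating subtrees $A_i$. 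The same classification applies near $v_L$. Chaining these together, the path must contain at least $L$ such unraveling moves before it can reach a configuration where the local rearrangement producing $v_L$ is feasible inside the ball, and then at least $L$ rebuilding moves to recover $v_L$, for a total length at least $2L$.

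The main obstacle, and the heart of the argument, is establishing rigidity: I must show that for every element encountered on a length-nonincreasing path near $u_L$ or $v_L$, the minimal penalty tree continues to contain the long bottleneck spine, so that the obstructions of Observation~\ref{O:bottleneck} persist and the classification of length-decreasing moves remains as restrictive as above. This requires choosing the decorating subtrees $A_i$ carefully so that each spine caret is a penalty caret for robust combinatorial reasons, and so that the penalty tree formalism of Section~\ref{sec:penaltytree} leaves no alternative routing that could shortcut the spine. This is the direct analogue of the delicate case analysis carried out in \cite{CT} for $n=1$, and with the penalty tree machinery of Theorem~\ref{thm:length} in hand the analysis extends to arbitrary $n$.
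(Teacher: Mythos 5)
Your overall strategy — building a high-penalty element from a long right spine in $T_-$ decorated with small subtrees, moving to two neighbors at distance two with an intermediate vertex outside the ball, and then analyzing paths through the formula $\phi_n = l_\infty + 2p_n$ — is the same skeleton as the paper's proof. But the mechanism you propose for bounding path length is genuinely different from the paper's, and it is the part you flag as ``the heart of the argument'' that remains unresolved in your sketch.

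The paper does not attempt to classify all length-nonincreasing moves available at intermediate vertices, nor does it count ``unravelings and rebuildings.'' Instead it uses two much more focused observations. First, the decorating subtrees $A_1,\dots,A_{n+1}$ are taken to be complete trees of depth $L+1$, where $L$ is the putative almost-convexity constant. This single choice does all the rigidity work: along any path $\gamma$ of length at most $L$ starting at $gx_n$, no caret is ever created or cancelled, the positive tree $T_+$ never changes, and every caret that is rotated from right to interior in the negative tree is a left caret in $T_+$ (and hence, by Lemma~\ref{lemma:leftcarets}, never weighted in a minimal penalty tree). Consequently, \emph{every} multiplication by a generator with negative exponent increases $l_n$ by exactly one, and every multiplication by a generator with positive exponent decreases it by at most one, with no need to classify which of these actually achieve a decrease. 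Second, the paper tracks a single caret $r_{n+1}$: it is a right caret at level $n+2$ in $gx_n$ and an interior caret at level $n+2$ in $gx_n^{-1}$, so along $\gamma$ there is a first prefix $\beta$ after which $r_{n+1}$ has returned to being the right caret at level $n+1$. The net level change forces $\beta$ to contain $m+1$ negative-exponent letters and $m$ positive-exponent letters, and combining with the first observation gives $l_n(gx_n\beta)\geq l_n(gx_n)+1=k+1$, so $\gamma$ exits the ball.

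Your version replaces this parity-of-levels argument with a claim that the only $\phi_n$-decreasing moves are ``peelings'' and ``contractions,'' and that at least $L$ of each must occur. That classification is neither established nor clearly true — multiplication by $x_j$ performs a rotation that is not naturally described as peeling or contracting, and Observations~\ref{O:exposed}, \ref{O:absence}, \ref{O:v2} give a variety of ways $\phi_n$ can decrease — and the ``at least $L$ of each'' count is an assertion without an argument. You correctly identify that rigidity of the penalty-tree bottleneck is what must be proven, but you do not supply the mechanism that makes it go through; the paper's choice of depth-$(L+1)$ complete decorating trees, together with Lemma~\ref{lemma:leftcarets} and the caret-level parity count, is precisely that mechanism, and your proposal does not reconstruct it.
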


We begin with an overview of the proof of the theorem.  Assume that
$(F,X_n)$ is almost convex, and construct particular group elements
$gx_n$ and $gx_n^{-1}$  so that $l_n(g) = l_n(gx_n) + 1 =
l_n(gx_n^{-1}) + 1 = k+1$. Almost convexity guarantees a short path
$\gamma$ from $gx_n$ to $gx_n^{-1}$ which lies completely within the
ball of radius $k$. Label the right caret at level $n+1$ in the
reduced tree pair diagram for $g$ by $r_{n+1}$. Let $\gamma_i$ for
$0 \leq i \leq k$ denote the prefix of $\gamma$ of length $i$.  In
the tree pair diagram for $gx_n \gamma_i$, caret $r_{n+1}$ will
change type and level as $i$ increases. The salient point is that in
$gx_n$ the caret $r_{n+1}$ is the right caret at level $n+2$, and in
$gx_n^{-1}$ it is an interior caret of level $n+2$, which is the
left child of the right caret at level $n+1$. Thus there is a point
along $\gamma$ where the caret with label $r_{n+1}$ is again the
right caret at level $n+1$.  Suppose this happens when the prefix
$\gamma_m$ is applied to $gx_n$.  To prove the theorem, we show that
$g x_n \gamma_m \notin B(k)$, contradicting the assumption of almost
convexity.

\begin{proof}
Suppose that $(F,X_n)$ is almost convex.  Then there is a constant
$L$ so that elements $x,y \in B(k)$ with $d_{X_n}(x,y) = 2$ can be
connected by a path of length at most $L$ which is contained in
$B(k)$.

We now construct a group element $g$ by giving a reduced tree pair
diagram $(T_-,T_+)$, so that the elements $gx_n^{\pm 1}$ yield a
counterexample to this assumption.

{\bf Constructing $T_-$.} Let $ r_1 \prec \cdots \prec r_{2n+1}
\prec r_{2n+2}$ be the right carets of $T_-$, where $r_1$ is the
root caret. These carets form a subtree with $2n+2$ leaves; let
$A_i$ be the subtree of $T_-$ whose root is attached to the left
leaf of caret $r_i$. For $i \leq n+1$, we take $A_i$ to be the
complete tree with $L+1$ levels. When $ n+2 \leq i \leq 2n+2$, $A_i$
will be empty.

{\bf Constructing $T_+$.} The root caret of $T_+$ will be the caret
immediately preceding $r_{n+1}$ in infix order. The right carets of
the right subtree of this caret will be $r_{n+1} \prec r_{n+2} \prec
\cdots \prec r_{2n-1} \prec r_{2n} \prec r_{2n+2}$, with the left
subtree of $r_j$ empty for $n+1 \leq j \leq 2n$, and the left
subtree of $r_{2n+2}$ consisting of the single caret $r_{2n+1}$. The
caret $r_{2n+1}$ is added as an interior caret to ensure that the
pair of trees is reduced. All carets before $r_{n+1}$ in infix order
will be left carets in this tree, except for the caret with infix
number two, which will be an interior caret, again simply to ensure
that the tree pair diagram is reduced.

Figure \ref{fig:notAC} gives an example of a group element which is
of this form.

\begin{figure}
\begin{center}
\includegraphics[width=4in]{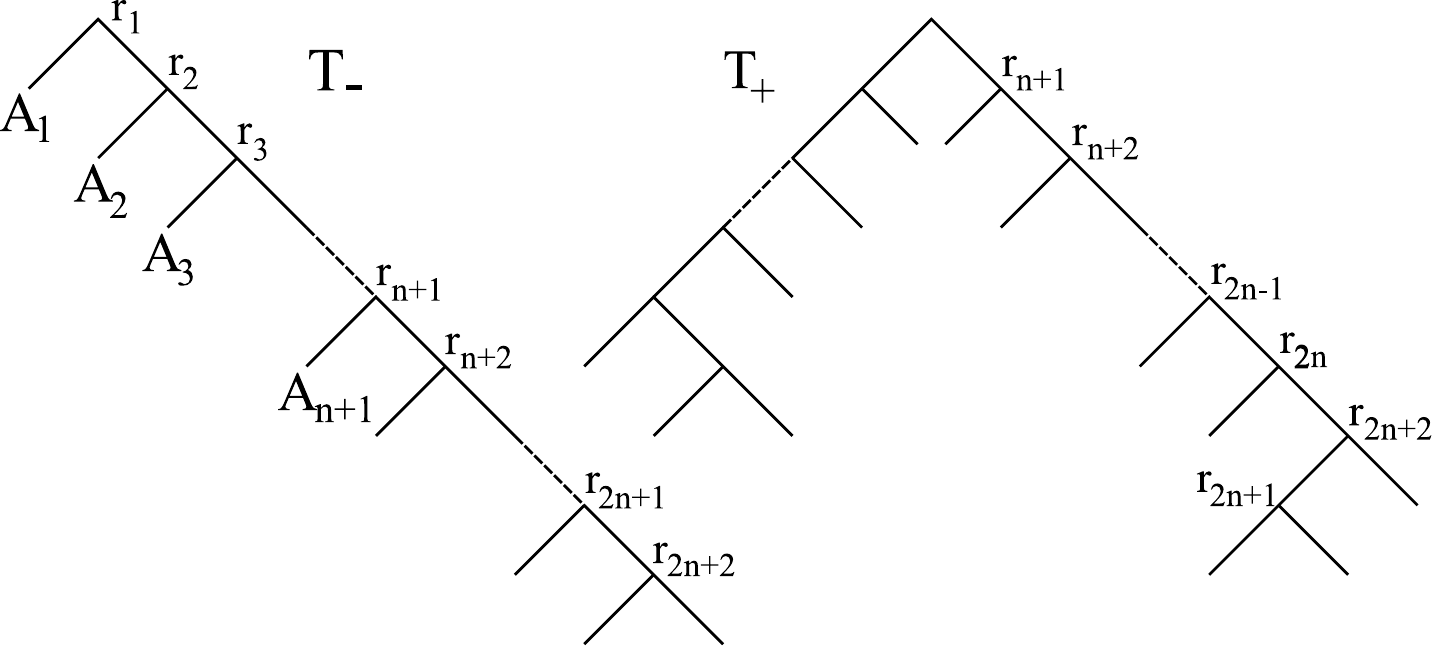}
\caption{An example of a group element $g$ constructed so that
$gx_n^{\pm 1}$ will contradict the assumption of almost convexity.}
\label{fig:notAC}
\end{center}
\end{figure}

We first prove a lemma which shows that both $x_n$ and $x_n^{-1}$
decrease the word length of $g$.  We then use $gx_n$ and $gx_n^{-1}$
as the two elements which will contradict the assumption of almost
convexity.

\begin{lemma}
Let $g = (T_-,T_+)$ be defined as above. Then
$l_n(gx_n)=l_n(gx_n^{-1})=l_n(g)-1$.
\end{lemma}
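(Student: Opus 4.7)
The plan is to apply Theorem \ref{thm:length}, which writes $l_n = l_\infty + 2p_n$, and to analyze how each part changes under multiplication by $x_n^{\pm 1}$.

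\textbf{For $gx_n$:} I would invoke Observation \ref{O:absence} with $i = n$. The hypothesis $A_{n+1} \neq \emptyset$ holds by construction. To verify the existence of a minimal penalty tree for $g$ not containing the edge $r_n \prec r_{n+1}$, I would exploit the fact that the generalized left edge of $r_{n+1}$ in $T_-$ traverses the rightmost path of the complete tree $A_{n+1}$. This provides an adjacency $a_j \prec r_{n+1}$ for each caret $a_j$ along that path, giving an alternative way to route the penalty tree into $r_{n+1}$. I would show that routing through one such adjacency (rather than through $r_n$) produces a minimal penalty tree of the required form, so that Observation \ref{O:absence} yields $l_n(gx_n) = l_n(g) - 1$.

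\textbf{For $gx_n^{-1}$:} I would analyze the tree pair for $gx_n^{-1}$ directly. Multiplication by $x_n^{-1}$ sends the right caret $r_{n+1}$ of $T_-(g)$ at level $n+1$ to an interior caret at level $n+2$ of $T_-(gx_n^{-1})$, appearing as the left child of the new right caret at level $n+1$ (the caret formerly at level $n+2$). Since $A_{n+1}$ is nonempty, the resulting pair is reduced, so Observation \ref{O:multbygen}(2a) gives $l_\infty(gx_n^{-1}) = l_\infty(g) + 1$. By Lemma \ref{L:change1} it then suffices to show $p_n(gx_n^{-1}) = p_n(g) - 1$.

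In the new tree pair, $r_{n+1}$ is interior in the new $T_-$ and right in $T_+$, hence no longer right in both trees. Checking the infix successors, one sees that $r_{n+1}$ is not type $N$ in either new tree, so $r_{n+1}$ is no longer a penalty caret, and the bottleneck role it played via Observation \ref{O:bottleneck} in minimal penalty trees for $g$ disappears. Starting from a minimal penalty tree for $g$, I would delete the vertex $r_{n+1}$ and reconnect its former descendants via the new adjacency $r_n \prec R$, where $R$ is the new right caret at level $n+1$. Because the weighted carets along the right-spine chain are those at distance at least $2$ from $v_0$ and at least $n-1$ from the leaf $r_{2n}$, shortening that chain by one vertex drops exactly one caret from the weighted count, giving $p_n(gx_n^{-1}) \leq p_n(g) - 1$. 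Combined with Lemma \ref{L:change1} this yields equality.

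The main obstacle is tracking the adjacency and penalty structure carefully through the rearrangement induced by $x_n^{-1}$: specifically, verifying that $r_{n+1}$ truly loses penalty status in the new pair, that the penalty contributions coming from the complete subtrees $A_i$ and from the right spine below $r_{2n}$ are unchanged, and that the constructed penalty tree for $gx_n^{-1}$ genuinely achieves the claimed decrease of exactly one in penalty weight (rather than any larger drop, which Lemma \ref{L:change1} would forbid).
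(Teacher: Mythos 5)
Your proposal is correct and follows essentially the same route as the paper. For $gx_n^{-1}$ your construction (delete $r_{n+1}$, reconnect $r_{n+2}$ to $r_n$ via the new adjacency, noting $r_{n+1}$ was weighted in every penalty tree by Observation \ref{O:bottleneck} and loses penalty status afterward) matches the paper's argument almost verbatim. For $gx_n$ you package the step through Observation \ref{O:absence}, whereas the paper performs the penalty-tree surgery directly, rerouting $r_{n+1}$ through the root caret $c_{root}$ of $T_+$; but $c_{root}$ is itself a caret on the right spine of $A_{n+1}$, so your proposed reroute through "some $a_j$ along the rightmost path of $A_{n+1}$" (together with $v_0 \prec a_j$, which holds because $a_j$ is a left caret in $T_+$) is the same device. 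The one point worth spelling out, which the paper handles carefully and your sketch leaves implicit, is why the reroute cannot increase the penalty: the new intermediate vertex $a_j$ sits at distance $1$ from $v_0$ and so is never weighted, and detaching the subtree below $r_{n+1}$ from the chain $r_1 \prec \cdots \prec r_n$ can only shorten leaf distances there, never lengthen them; you should also invoke Lemma \ref{lemma:leftcarets} to assume $a_j$ is attached to $v_0$ if it already appears in the tree.
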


\begin{proof}
We show that multiplication by both $x_n$ and $x_n^{-1}$ decrease
the word length of the element $g$ constructed above.

{\bf Case 1. Multiplication by $x_n^{-1}$.} Multiplication by
$x_n^{-1}$ creates a pair of trees $(\tilde{T}_-,\tilde{T}_+)$ in
which the caret $r_{n+1}$ is now an interior caret in $\tilde{T}_-$,
and $\tilde{T}_+ = T_+$.  Thus $l_{\infty}(gx_n^{-1}) =
l_{\infty}(g) + 1$.

We will show that any penalty tree for $g$ can be altered to yield a
penalty tree for $gx_n^{-1}$ with one fewer weighted caret. First we
observe that all penalty carets in the reduced tree pair diagram for
$gx_n^{-1}$ were also penalty carets for $g$. The only two possible
differences in the reduced tree pair diagrams for $g$ and
$gx_n^{-1}$ which might influence the construction of penalty trees
are:
\begin{enumerate}
\item the caret $r_{n+1}$ is a right caret in both $T_-$ and $T_+$,
but in $\tilde{T}_-$ it becomes an interior caret which is not of
type $N$, hence is no longer a penalty caret in $gx_n^{-1}$, and
\item the adjacency $r_{n} \prec r_{n+2}$, not present for $g$, is present in $gx_n^{-1}$.
\end{enumerate}

Since $r_{2n}$ is a penalty caret for $g$, by
Observation~\ref{O:bottleneck}, the string of edges $ r_{n+1} \prec
\cdots \prec r_{2n}$ must appear in every penalty tree for $g$, and
$r_{n+1}$ is not a left caret in either $T_-$ or $T_+$. Hence
$r_{n+1}$ is a weighted caret in every penalty tree for $g$.
Furthermore, since $r_{n+1}$ and $r_{n+2}$ are consecutive carets in
the infix order, no other carets other than the $r_i$ carets in the
string above are connected to the root of the penalty tree by a path
passing through $r_{n+1}$.  Let $\mathcal{P}$ be any penalty tree
for $g$. Since caret $r_n$ is a left caret in $T_+$, we may assume,
by the proof of Lemma~\ref{lemma:leftcarets}, that if $r_n$ is a
vertex of $\PP$, then the edge $v_0 \prec r_n$ also appears on
$\PP$. We construct a penalty tree $\mathcal{P}'$ for $gx_n^{-1}$ as
follows: delete the edge $r_{n+1} \prec r_{n+2}$ from ${\mathcal
P}$. This leaves the caret $r_{n+1}$ as a leaf of ${\mathcal P}'$,
so we simply remove it, as it is not a penalty caret in $gx_n^{-1}$.
Now if $r_n$ did appear on $\PP$, connect $r_{n+2}$ via the edge
$r_n \prec r_{n+2}$. If not, add the two edges $v_0 \prec r_n \prec
r_{n+2}$. In either case, the number of weighted carets in the
subtree whose root is $r_{n+1}$ does not increase, and even if we
added the caret $r_n$ to $\PP$, it is not weighted. Thus caret
$r_{n+1}$, which was a weighted penalty caret in $\mathcal{P}$, is
not even present in $\mathcal{P}'$. Hence, $p_n(\mathcal{P}') \leq
p_n(\mathcal{P})-1$, so applying this argument to a minimal penalty
tree for $g$ yields $p_n(gx_n^{-1})=p_n(g)-1$, and we conclude that
$l_n(gx_n^{-1}) = l_n(g) - 1$.

 {\bf Case 2. Multiplication by
$x_n$.} Let $c$ be the caret which is the left child of caret
$r_{n+1}$ in $T_-$, that is, the root of the subtree $A_{n+1}$. Then
multiplication by $x_n$ produces a pair $(T_-',T_+')$ in which $c$
is now the right caret at level $n+1$ in $T_-'$, and $r_{n+1}$ is
the right caret at level $n+2$ in $T_-'$. Since an interior caret
has been changed to a right caret, $l_{\infty}(gx_n) =
l_{\infty}(g)-1$. Caret $r_{n+1}$, however, has not changed type: it
is of type $N$ in both $T_-$ and $\tilde{T}_-$, and a left caret
which is not of type $N$ in both $T_+$ and $\tilde{T}_+$. The only
other change is that the adjacency $r_{n} \prec r_{n+1}$ in $T_-$ no
longer exists in $T_-'$, and hence is not available for constructing
a minimal penalty tree. We will show that any penalty tree for $g$
may be altered to construct a penalty tree for $gx_n$ with no
additional weighted penalty carets.

Let $\mathcal{P}$ be any penalty tree for $g$. Let $c_{root}$ be the
root caret of $T_+$. As before, since $c_{root}$ is a left caret in
$T_+$, we may assume that either $c_{root}$ does not appear on
$\PP$, or if it does, so does the edge $v_0 \prec c_{root}$. If the
edge $r_{n} \prec r_{n+1}$ is not present in $\mathcal{P}$, then
$\mathcal{P}'=\mathcal{P}$ is a penalty tree for $gx_n$. If the edge
$r_{n-1} \prec r_n$ is present in $\mathcal{P}$, we construct
$\mathcal{P}'$ as follows. Delete the edge $r_{n} \prec r_{n+1}$ in
${\mathcal P}$.  If $c_{root}$ was on $\PP$, it appears on the edge
$v_0 \prec c_{root}$, and we add the edge $c_{root} \prec r_{n+1}$.
If $c_{root}$ was not on $\PP$, add it together with the two edges
$v_0 \prec c_{root} \prec r_{n+1}$ to form $\PP'$. Thus the vertices
$r_{n}$ and $r_{n+1}$ are present in both ${\mathcal P}$ and
${\mathcal P}'$.  It follows from the construction of ${\mathcal
P}'$ that $p_n(\mathcal{P}') \leq p_n(\mathcal{P})$, and hence
$p_n(gx_n)=p_n(g)$. Thus $l_n(gx_n) = l_n(g) - 1$ and the lemma
follows.
\end{proof}

It follows from the assumption that $(G,X_n)$ is almost convex that
there is a path $\gamma$ of length at most $L$ from $gx_n$ to
$gx_n^{-1}$ which is completely contained in the ball of radius $k$,
where $k = l_n(g)-1$.  We view $\gamma$ as a product
$\alpha_1\alpha_2 \cdots \alpha_L$ where each $\alpha \in \{x_0^{\pm
1},x_1^{\pm 1}, \cdots ,x_n^{\pm 1},Id\}$, and consider the prefixes
$gx_n\gamma_i=gx_n \alpha_1 \alpha_2 \cdots \alpha_i$.

We first consider the effect of multiplication by $x_n$ and
$x_n^{-1}$ on the caret $r_{n+1}$ in the initial word $g =
(T_-,T_+)$. This caret, in $T_-$, is a right caret at level $n+1$.
After multiplication by $x_n$, we obtain $gx_n = (T_-',T_+')$, and
now caret $r_{n+1}$ is a right caret in $T_-'$ at level $n+2$. After
multiplication by $x_n^{-1}$, we obtain $gx_n^{-1} =
(\tilde{T_-},\tilde{T_+})$, and this caret is an interior caret in
$\tilde{T_-}$ which is the left child of the right caret at level
$n+1$.

In each prefix $gx_n\gamma_i=gx_n \alpha_1\alpha_2 \cdots \alpha_i$
we note the position of the caret with label $r_{n+1}$. The
generators in the set $X_n$ and their inverses perform combinatorial
rearrangements of the subtrees of the tree pair diagram representing
$gx_n\gamma_i$ at levels one through $n+1$ along the right side of
the negative tree in the pair. Thus, there is a first point along
the path $\gamma$ at which caret  $r_{n+1}$ is again the right caret
at level $n+1$. Denote this prefix of $\gamma$ by $\beta$, which has
length $j$ where $1 \leq j \leq L$. Denote the prefixes of $\beta$
by $\beta_i$, where $1 \leq i \leq j$.

We note that because of the choice of $g$, multiplication of $gx_n
\beta_i$ by $\alpha_{i+1}$ never requires the addition of carets to
the tree pair diagram for $gx_n \beta_i$, and as a result, the
positive tree is always unchanged by this multiplication.
Additionally, after this multiplication is performed, no cancelation
is necessary to obtain the reduced tree pair diagram. The only
exposed carets in $T_+$ are in the second and the penultimate
carets, and these carets are not exposed in $T_-$, nor can they ever
become exposed along $\beta$. This means that the number of carets
in the tree pair diagrams for $gx_n \beta_i$ remains constant for $i
= 1,2 \cdots ,j$.

For each prefix $\beta_i$ of $\beta$, we consider the tree pair
diagram for $g_i = gx_n\beta_i$.  As the values of $i$ increase, the
position of caret $r_{n+1}$ moves up and down the right side of the
negative tree at levels at least $n+1$, and is unchanged in the
positive tree. If the next generator in the path $\beta$ is of the
form $x_j$, then the level of $r_n$ in the negative tree increases
by one. If the next generator in the path $\beta$ is $x_j^{-1}$,
then the level of $r_n$ in the negative tree decreases by one.  In
either case, the position of this caret in the positive tree is
unchanged. Since the level of caret $r_{n+1}$ must have a net
decrease of 1, the path $\beta$ necessarily consists of $m+1$
generators with negative exponents and $m$ generators with positive
exponents.

To prove this theorem, we show that generators of the form
$x_j^{-1}$ as part of the path $\beta$ always increase the word
length.  Thus the word length $l_n(gx_n \beta)$ satisfies the
following inequality:
$$l_n(gx_n\beta)
\geq l_n(gx_n) +(m+1)-m=k+1 > k.$$  It follows from this inequality
that the element $gx_n\beta$ does not lie in the ball of radius $k$,
contradicting the assumption of almost convexity.

Since multiplication by $x_j^{-1}$ will always move a right caret to
an interior or left caret, and carets are never added in order to
complete multiplication along the path $\beta$, multiplication of
$gx_n \beta_i$ by $x_j^{-1}$ will always yield $l_{\infty}(gx_n
\beta_{i+1}) = l_{\infty}(gx_n \beta_{i}) + 1.$

We now show that the penalty contribution to the word length is
unchanged when $gx_n \beta_i$ is multiplied by $x_j^{-1}$.  Each
such multiplication changes a right caret into an interior caret,
and also disrupts some adjacency, which might affect the penalty
tree.  However, we note two salient points:
\begin{enumerate}
\item The caret which is shifted from right to interior by this multiplication
always precedes caret $r_{n+1}$ in infix order, and any such caret
can be connected to the right side of the negative tree for $gx_n
\beta_i$ by a path of at most length $L$.  Thus such a carets is a
left caret in $T_+$ as well as in the positive tree in the reduced
pair representing $gx_n \beta_i$.
\item It follows from Lemma \ref{lemma:leftcarets} that this caret
is never a weighted penalty caret in any  minimal penalty tree for
$gx_n \beta_i$, for any $i$.
\end{enumerate}
Thus when $gx_n \beta_i$ is multiplied by $x_j^{-1}$ to obtain $gx_n
\beta_{i+1}$, we must have $p_n(gx_n \beta_i) = p_n(gx_n
\beta_{i+1})$. Combining this with the fact that $l_{\infty}(gx_n
\beta_{i+1}) = l_{\infty}(gx_n \beta_{i}) + 1$ implies that
$l_{n}(gx_n \beta_{i+1}) = l_{n}(gx_n \beta_{i}) + 1$, which proves
the theorem.
\end{proof}

\section{Depth of Pockets in $(F,X_n)$}
\label{sec:pockets}

Let $G$ be a finitely generated group with a finite generating set
$S$.  We say that $w \in G$, with $|w|_S = n$, is a $k$-pocket if
$B_w(k) \subset B_{Id}(n)$, taking the maximal $k$ for which this is
true.  Thus any path from $w$ in the Cayley graph $\Gamma(G,S)$ of
length at most $k$ remains in the ball of radius $n$ centered at the
identity, and there is some path of length $k+1$ emanating from $w$
which leaves this ball.  The integer $k$ is called the {\em depth}
of the pocket.

We say that a group $G$ has {\em deep pockets} with respect to a
finite generating set $S$ if there is no bound on the depth of group
elements.  Bogopol'sk$\breve{\textrm{i}}$
proved in \cite{B} that
hyperbolic groups have finite depth, that is, for every generating
set there is a uniform upper bound on the depth of all pockets.
There are many examples of finitely-generated infinite groups with
deep pockets:  the lamplighter groups $\Z_n \wr \Z = \langle a,t |
t^n, [a,a^{t^i}], \ i \in \Z \rangle$ with respect to the generating
set $\{a,t\}$ were the first examples of such groups \cite{CT3}, and
a finitely presented example of such a group is given in \cite{CR}.
Warshall, in \cite{W}, proves that the discrete Heisenberg group $
\langle x,y | [x,[x,y]],[y,[x,y]] \rangle$ has deep pockets with
respect to any finite generating set. Riley and Warshall in
\cite{RW} prove that the property of having deep pockets does depend
on the choice of generating set.

We show below that for any $k \in \Z^+$, Thompson's group $F$ has a
generating set $X_n = \{x_0,x_1, \cdots ,x_n\}$ which yields pockets
of depth at least $k$, as long as $n \geq 2k+2$. Since $2k+2$ is
always greater than one, this does not contradict the result in
\cite{CT} stating that $(F,X_1)$ has only pockets of depth two. The
theorem below is really of interest for large values of $k$.  It is
proved by example; for a given $k$ we construct a family of pockets
whose depth is at least $k$ with respect to $X_n$.  In \cite{CT}, an
exhaustive description is given of all pockets with respect to
$X_1$, which are necessarily of depth two.  We do not give such a
description below with respect to $X_n$.

In addition, we give upper bounds on pocket depth in each of these
generating sets. We show that for fixed a $n$, there are no pockets
of depth greater than or equal to the maximum of $4n-3$ and $2n+1$.
Note that for $n \geq 2$ we have $4n-3 \geq 2n+1$, so it is only for
the case $n=1$ that the upper bound on pocket depth is $2n+1=3$, and
in this $n=1$ case, there are in fact pockets of depth 2.

\begin{thm}\label{thm:pockets}
For any $k \geq 1$, Thompson's group $F$ has pockets of depth at
least $k$ with respect to the generating set $X_n = \{x_0,x_1,
\cdots ,x_n\}$, for $n \geq 2k+2$.
\end{thm}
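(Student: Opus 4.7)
The plan is to prove the theorem constructively: for each $k \geq 1$ and each $n \geq 2k + 2$, I will exhibit an explicit element $g_k \in F$ that is a pocket of depth at least $k$ in $(F, X_n)$. The construction should generalize the element used in the proof of Theorem \ref{thm:notAC}, but replace the single obstruction there with $k$ nested, mutually independent obstructions placed along the right side of the negative tree. Concretely, I would take $(T_-, T_+)$ with $T_-$ consisting of a long string of right carets $r_1 \prec r_2 \prec \cdots \prec r_M$, where $M$ is chosen well beyond $n+1$ and complete binary subtrees of sufficient depth are attached to a carefully selected subset of the left leaves of $r_1, \ldots, r_{n+1}$. The positive tree $T_+$ would be built as a mirror-like structure producing at least $k$ weighted penalty carets in the minimal penalty tree for $g_k$, each associated with its own obstruction to length reduction.

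To verify that $g_k$ is a pocket of depth at least $k$, I need to show that for every word $v$ in $X_n^{\pm 1}$ with $|v| \leq k$, we have $l_n(g_k v) \leq l_n(g_k)$. By Lemma \ref{L:change1}, word length changes by exactly $\pm 1$ under each generator multiplication, so this reduces to showing that the walk $i \mapsto l_n(g_k v_i)$ never exceeds $l_n(g_k)$ for $i \leq k$. I would first use the formula $l_n(g) = l_\infty(g) + 2 p_n(g)$ together with the case analysis of Lemma \ref{L:change1} to verify directly that every single generator applied to $g_k$ decreases $l_n$ by $1$, so that $g_k$ is already a $1$-pocket. Then I would argue inductively that at each subsequent step, the new element $g_k v_i$ still contains enough obstructions in its penalty tree to ensure that no further generator multiplication produces an element of length greater than $l_n(g_k)$.

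The constraint $n \geq 2k + 2$ enters because each of the $k$ weighted penalty carets must satisfy $d_{\PP}(v_0, v) \geq 2$ and $d_{\PP}(v, l) \geq n - 1$ for some leaf $l$ of the penalty tree. Accommodating $k$ such obstructions along the right spine of $T_-$ while keeping each susceptible only to generators in $X_n$ requires a structure whose overall depth is on the order of $2k + 2$; shorter configurations collapse the obstructions together and break the pocket property after fewer than $k$ steps.

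The main obstacle will be the inductive step controlling how these penalty obstructions propagate under generator multiplication. Specifically, I must show that any element $g_k v_i$ of word length $l_n(g_k) - s_i$, for $i < k$, cannot be multiplied by a single generator to produce an element of length exceeding $l_n(g_k)$. This requires showing that along the path, whenever $l_\infty$ decreases by $1$ (so $p_n$ stays the same) or $l_\infty$ increases by $1$ (so $p_n$ drops by $1$), the residual penalty structure still rigidly forbids the walk from climbing back above $l_n(g_k)$. The arguments will closely parallel the casework of Lemmas \ref{L:change1} and \ref{L:reduce}, but now applied to a sequence of up to $k$ consecutive generator multiplications, with the independence of the $k$ obstructions serving to keep the analyses manageable; I expect that the placement of the complete binary subtrees can be tuned so that the rearrangement needed to attack obstruction $j$ leaves the carets supporting the other obstructions untouched.
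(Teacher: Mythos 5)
Your high-level plan---build $g_k$ by attaching deep complete binary subtrees along the right spine of $T_-$, and control $l_n(g_k\beta_i)$ via the penalty-tree formula and Lemma~\ref{L:change1}---matches the paper's strategy. But the central mechanism you propose, namely $k$ ``nested, mutually independent obstructions'' tuned so that ``the rearrangement needed to attack obstruction $j$ leaves the carets supporting the other obstructions untouched,'' is not how the paper argues and would be hard to make rigorous. An arbitrary word $\beta$ of length $k$ in $X_n^{\pm 1}$ need not allocate one generator per obstruction; it can hit the same level repeatedly, or rearrange carets belonging to several of your obstructions at once, so there is no well-defined notion of a generator ``attacking'' one obstruction while leaving the others intact. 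You also place your complete subtrees only on a ``carefully selected subset'' of the left leaves of $r_1,\ldots,r_{n+1}$, whereas the paper attaches the complete tree on $k+1$ levels to \emph{every} one of $r_1,\ldots,r_{n+k+1}$, and the right spine of $T_-$ has length $2n+k+2$; the long empty tail of the spine is what keeps $r_2,\ldots,r_{n+k+1}$ at distance at least $n-1$ from a leaf, giving $p_n(g)=n+k$.

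The paper sidesteps the step-by-step induction you flag as your main obstacle with a direct, non-inductive bound. For each $\beta_i$ with $i \leq k$, it exhibits an explicit (possibly non-minimal) penalty tree for $g\beta_i$ by greedy routing and bounds its weight using Lemma~\ref{lemma:complete}: in any subtree of the complete tree on $m$ levels, the greedy path from any caret to the root has length at most $m$. The argument splits on the sign of $M_i - N_i$ (the net change in the number of spine carets after multiplying by $\beta_i$) and in both cases invokes the inequality $k+i+1 \leq 2k+1 \leq n-1$, which is precisely where $n \geq 2k+2$ enters: after $i \leq k$ multiplications, every displaced caret sits in a subtree of the complete tree on at most $2k+1$ levels, hence within distance $n-1$ of the spine, hence is never far enough from a leaf to become weighted. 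This is a quantitative absorption estimate---the complete subtrees are deep enough to soak up any $k$ moves, however $\beta$ distributes them---and it is the ingredient missing from your outline; you should replace the inductive independence scheme with a bound of this type.
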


\begin{proof}
We construct a group element $g=g_k = (T_-,T_+)$ for each $k \in
\Z^+$ which is a pocket of depth at least $k$ with respect to the
generating set $X_n$, for $n \geq 2k+2$ by describing the trees
$T_-$ and $T_+$. We assume that the carets of these trees are
numbered in infix order.

\begin{enumerate}
\item Let $ r_1
\prec \cdots r_{2n+k+2}$ be the right carets of $T_-$.  Let $A_i$ be
the left subtree of $r_i$; we choose  $A_i$ to be the complete tree
with $k+1$ levels for $1 \leq i \leq n+k+1$. For $i > n+k+1$, $A_i$
is empty.

\medskip

\item The right carets of $T_+$ are $r_1 \prec \cdots \prec r_{2n+k} \prec r_{2n+k+2}$, but caret $r_{2n+k+1}$ is the left child of caret $r_{2k+k+2}$,  an interior caret.
Denote the left subtree of caret $r_i$ by $B_i$, and as in $T_-$,
$B_i$ is empty for $n+k+1 < i \leq  2n+k$. For $1\leq i \leq n+k+1$,
as an independent tree, $B_i$ consists of a string of right carets,
one fewer in number than the number of carets in $A_i$, with a
penultimate interior caret.  This additional caret ensures that the
tree pair diagram will be reduced.

\end{enumerate}
Figure \ref{fig:pocket} gives an example of a group element of the
above form.

\begin{figure}
\begin{center}
\includegraphics[width=4in]{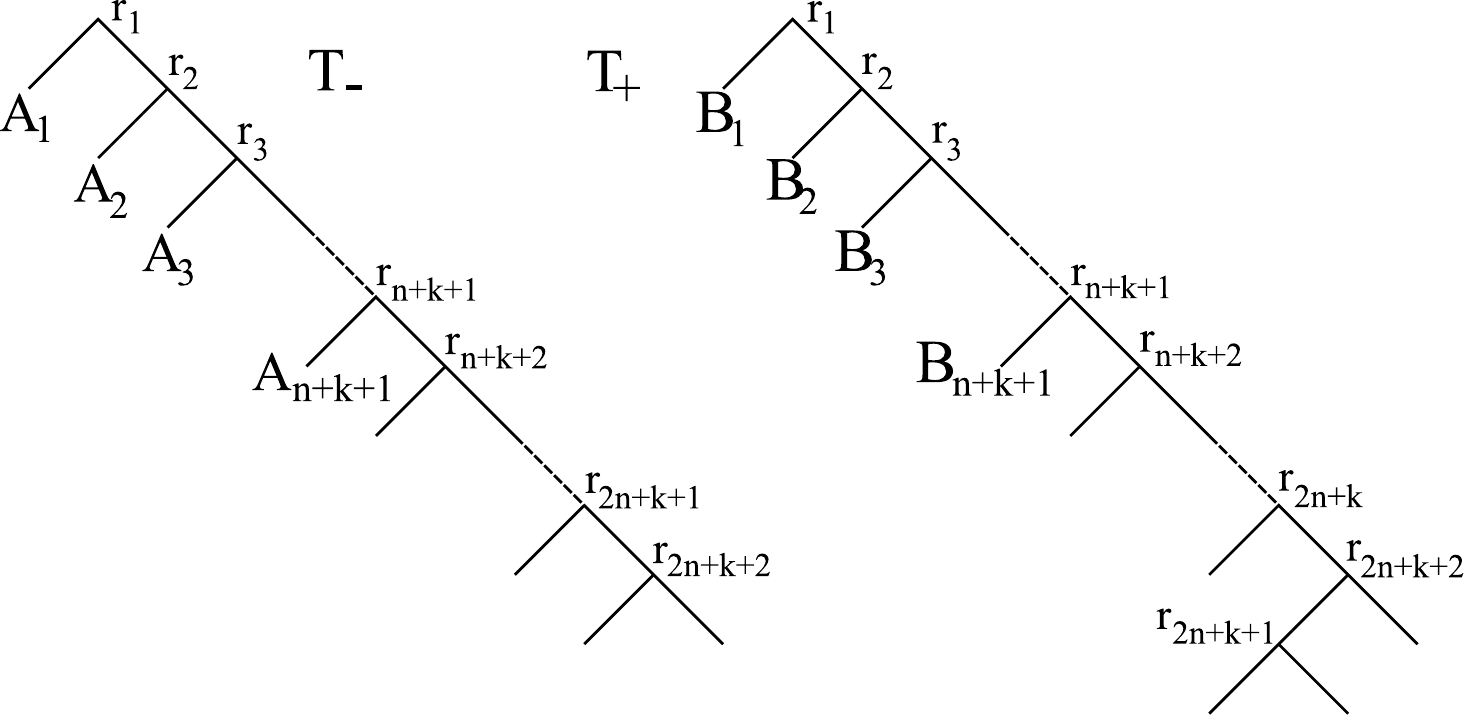}
\caption{An example of a group element which will be a pocket of
depth at least $k$.} \label{fig:pocket}
\end{center}
\end{figure}

Let $\beta=\alpha_1\alpha_2\ldots \alpha_k$ be any word with
$\alpha_i \in X_n$ or $\alpha_i^{-1} \in X_n$ for all $i$, and
denote the prefixes of $\beta$ by $\beta_i=\alpha_1 \alpha_2 \ldots
\alpha_i$.  The original word $g$ was constructed so that the
following are always true:
\begin{enumerate}
\item The original tree pair diagram $(T_-,T_+)$ is reduced.

\medskip

\item For each $i$, multiplication of $g \beta_i$ by $\alpha_{i+1}$ can
be accomplished without adding additional carets to the tree pair
diagram, and the resulting tree pair diagram for each $g\beta_i$ is
always reduced. Thus the number of carets in the reduced tree pair
diagram for $g \beta_i$ remains constant for $i=1,2, \cdots ,k$.

\medskip

\item In the tree pair diagram for $g \beta_i$, the positive tree
in the pair is always $T_+$, the same positive tree as in the
initial word $g$. Let $g \beta_i$ be represented by the reduced tree
pair diagram $(T_i,T_+)$.

\medskip

\item The only carets that can be affected when $g \beta_i$ is
multiplied by $\alpha_{i+1}$ are penalty carets.  Moreover, these
carets remain penalty carets when the multiplication is completed,
since they have type $N$ in $T_+$, and the tree $T_+$ is unchanged
by the multiplication.

\medskip
\item The subtree of $T_i$ with root caret $r_{n+k+2}$ remains unchanged for each
$g\beta_i$, and always hangs from the right leaf of caret
$r_{n+k+1}$. All carets in this subtree but the final two are
penalty carets, and necessarily form a string of length $n-1$ which
hangs from vertex $r_{n+k+1}$ in any penalty tree for $g\beta_i$, as
described in Observation \ref{O:bottleneck}.
\end{enumerate}

To prove Theorem \ref{thm:pockets}, we will show that $l_n(g\beta_i)
\leq l_n(g)$ for all $1 \leq i \leq k$.  We will describe the change
in $l_{\infty}$ between $g$ and $g \beta_i$, and bound the change in
penalty contribution between these two elements as well.

First note that a minimal penalty tree for $g$ is easily constructed
by joining each penalty caret to $v_0$ by choosing the shortest
adjacency path in the single tree $T_-$. Namely, connect each caret
to the caret adjacent to it via its honest, not generalized, left
edge. We call this path the {\em greedy path} from a caret to $v_0$.
It follows that the only penalty carets which are weighted in this
minimal penalty tree are $ r_2, \ldots r_{n+k+1}$, yielding $p_n(g)
= n+k$.

We begin with a lemma bounding the length of the greedy paths from
any caret to $v_0$.  This lemma is easily proved by induction.

\begin{lemma}\label{lemma:complete}
Let $T$ be any nonempty subtree of the complete tree on $m$ levels.
Then the maximum length of the greedy path from any caret to $v_0$
is $m$.
\end{lemma}

When considering possible penalty trees for $g \beta_i = (T_i,T_+)$,
we again must consider those carets on the right side the tree
$T_i$.  Let $M_i$ denote the number of carets $r_j$, for $1 \leq j
\leq 2n+k+2$,  which were right carets in $(T_-,T_+)$ but are no
longer right carets in $T_i$, and $N_i$ the number of right carets
in $T_i$ which are not amongst the carets numbered $r_j$ for $1 \leq
j \leq 2n+k+2$. Observe that
$l_{\infty}(g\beta_i)=l_{\infty}(g)+M_i-N_i$.

We give an upper bound for $p_n(g\beta_i)$ in order to control
$l_n(g\beta_i)$ by constructing a penalty tree for $g\beta_i$ which
is not necessarily minimal but will give the estimate necessary to
prove Theorem \ref{thm:pockets}.  We do this in two cases, depending
on the sign of $M_i-N_i$.

{\bf Case 1: $M_i-N_i > 0$.}  Construct a penalty tree ${\mathcal
P}_i$ for $g \beta_i$ once again by choosing the greedy paths in the
tree $T_i$. The right carets of $T_i$ are $c_1 \prec c_1 \prec c_2
\prec \cdots\prec c_l \prec r_{n+k+1} \prec r_{n+k+2} \prec \cdots
\prec r_{2n+k+2}$ where some subset of the first $l$ right carets
are equal to $r_j$ for values of $j$ between $1$ and $n+k$. These
adjacencies alone yield a subtree where each vertex, other than the
initial and final vertices, has valence two. For each $j$, the left
subtree of $c_j$ in the tree $T_i$ is a subtree of the complete tree
with $k+i+1$ levels, where $ i \leq k$. It follows from Lemma
\ref{lemma:complete} that the greedy path from a caret in the left
subtree of $c_j$ to $c_{j-1}$ has length at most $k+i+1$, where
$k+i+1 \leq 2k+1 \leq n-1$. Therefore we see that none of the carets
in the left subtrees of the $c_i$ correspond to weighted penalty
carets in ${\mathcal P}_i$. Thus $p_n(\mathcal{P}_i) =n+k-M_i+N_i =
p_n(g)-M_i+N_i$, and the difference in penalty contribution to the
word length between $g$ and $g \beta_i$ is bounded as follows:
$$p_n(g\beta_i) - p_n(g) \leq N_i - M_i .$$  Recall from above that
$l_{\infty}(g\beta_i)=l_{\infty}(g)+M_i-N_i$, and combine these
estimates to bound the difference in word length:

\begin{eqnarray}
\nonumber
  l_n(g\beta_i)-l_n(g) &=& \left(l_{\infty}(g\beta_i)-l_{\infty}(g)\right)
  +
2(p_n(g\beta_i)-p_n(g)) \\
 \nonumber  &=& (M_i-N_i) + 2(p_n(g\beta_i)-p_n(g)) \\
 \nonumber &\leq& (M_i - N_i) + 2(N_i - M_i) \\
 \nonumber  &=& N_i - M_i \\
  \nonumber  &<&0
\end{eqnarray}

It follows that when $M_i-N_i > 0$, we have $l_n(g\beta_i) <
l_n(g)$.

{\bf Case 2: $M_i-N_i \leq 0$ .} Unlike Case 1, we now build a
penalty tree ${\mathcal P}_i$ using first the adjacencies $r_j \prec
r_{j+1}$ present in $T_+$ for $1 \leq j\leq 2n+k$, attaching $r_1$
to the dummy caret $v_0$.  This again yields a tree where each
vertex other than the final and initial ones has valence two.

We now attach vertices to ${\mathcal P}_i$ representing the other
penalty carets of $T_i$, those not amongst the carets $r_j$ for $1
\leq j \leq 2n+k$. For each such caret $p$, we use the adjacencies
along the greedy path in $T_i$ from $p$ to $v_0$. We take the
longest subpath of the greedy path containing $p$ but none of the
$r_j$ carets, and attach vertices and edges corresponding to these
adjacencies to $P_i$, joining this path to the existing tree at the
next caret along the path, which is necessarily either $v_0$ or
$r_j$ for some $1 \leq j \leq 2n+k$.  We claim that the distance
between $p$ and that $r_j$ caret is at most $2k+1 \leq n-1$. This
will imply that none of these other carets $p$ will be weighted
carets in $\mathcal{P}_i$. To see why the claim is true, note that
if caret $p$ is a right caret in $T_i$, then the distance along the
greedy path to the next $r_j$ caret is at most $i \leq k$. If $p$ is
not a right caret, then it is in the left subtree of a right caret
$p'$ of $T_i$. The caret $p'$ is the right child of a caret $q$,
where $q$ is either a right caret of $T_i$ or the dummy caret $v_0$,
and the greedy path from $p$ to $v_0$ passes through $q$. If the
distance from $q$ to the next $r_j$ caret along that greedy path is
$m \leq i \leq k$, then the left subtree of $p'$ is a subtree of a
complete tree with $k+(i-m)+1$ levels. It follows from Lemma
\ref{lemma:complete} that the greedy path from $p$ to $q$ has length
at most $k+(i-m)+1$. Hence, the greedy path from $p$ to an $r_j$
caret has length at most $k+(i-m)+m+1=k+i+1 \leq 2k+1$, establishing
the claim. Therefore, $p_n(\mathcal{P}_i)=p_n(\mathcal{P} )$, and
hence $p_n(g \beta_i) \leq p_n(g)$.

We bound the difference in word length between $g$ and $g \beta_i$
as above, again using the fact that  $l_{\infty}(g \beta_i) =
l_{\infty}(g) +M_i - N_i$.

\begin{eqnarray}
\nonumber
  l_n(g\beta_i)-l_n(g) &=& \left(l_{\infty}(g\beta_i)-l_{\infty}(g)\right)
  +
2(p_n(g\beta_i)-p_n(g)) \\
 \nonumber  &=& (M_i-N_i) + 2(p_n(g\beta_i)-p_n(g)) \\
 \nonumber &\leq& (M_i - N_i) + 0 \\
 \nonumber  &=&  M_i - N_i\\
  \nonumber  &\leq&0
\end{eqnarray}

This shows that $g$ is a pocket of depth at least $k$ and completes
the proof of the theorem.
\end{proof}

Finally, we establish an upper bound on pocket depth.

\begin{thm}
For $n \geq 1$, $F$ has no pockets of depth $k$ with respect to
$X_n$, if $k \geq Max \{4n-3, 2n+1\}$.
\end{thm}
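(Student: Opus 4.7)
The plan is to show that every $w = (T_-, T_+) \in F$ with $l_n(w) = m$ admits an escape word $\beta$ of length at most $K := \max\{4n-3,\,2n+1\}$ satisfying $l_n(w\beta) > m$; this forces the depth of any pocket to be strictly less than $K$. By Lemma \ref{L:change1}, each generator step changes $l_n$ by exactly $\pm 1$, and by Observation \ref{O:multbygen} the change in $l_\infty$ is also exactly $\pm 1$, so $p_n$ changes by one of $-1, 0, +1$ per step. I will construct $\beta$ as the concatenation of a setup phase that steers the diagram into a configuration from which length increase is forced, followed by the increase step itself.

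The argument splits on the number $j$ of right carets in $T_-$. In Case 1, $j < n+2$: multiplication by $x_n^{-1}$ cannot proceed without caret insertion and thus lands in Case 1 of Observation \ref{O:multbygen}, raising $l_\infty$ by 1. If this already raises $l_n$, then $|\beta| = 1$; if not, then $p_n$ must have dropped, and I will use a short follow-up sequence of length at most $2n$, built from the patterns appearing in the proofs of Lemmas \ref{L:change1} and \ref{L:reduce}, to reach an element of length $m+1$, giving the bound $|\beta| \leq 2n+1$. In Case 2, $j \geq n+2$: I first apply a sequence of positive generators $x_i$ to collapse some right carets of $T_-$ into the interior, reducing to Case 1. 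A careful accounting, which uses the rigidity imposed by Observation \ref{O:bottleneck} on penalty trees above a common right caret, shows this setup can be done in at most $2n-2$ steps (with some steps shared between the setup and the Case 1 escape), producing the overall bound $|\beta| \leq 4n-3$.

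The setup phase exploits the observations in Section \ref{sec:lemma2}: Observations \ref{O:exposed}, \ref{O:absence}, and \ref{O:v2} characterize precisely the configurations from which some generator decreases $l_n$, so their negations identify configurations where a specific generator can raise $l_n$. The setup is designed to violate all three observations simultaneously for the generator of interest, thereby forcing the next multiplication to increase length. Caret-type tracking (type $N$, left, right, interior) along the right spine of $T_-$ plays the central organizing role, as in the case analysis of Lemma \ref{L:reduce}.

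The main obstacle is controlling the penalty contribution $p_n$ along the path. While $l_\infty$ is easily monitored locally via Observation \ref{O:multbygen}, $p_n$ is defined by a global minimization over penalty trees, and its behavior under a single multiplication can shift in ways that depend on both the adjacency pattern in $(T_-, T_+)$ and on the presence of bottleneck right carets. The delicate part of the argument is verifying that the setup phase does not create opportunities for $p_n$ to drop in lockstep with $l_\infty$ rising (which would leave $l_n$ unchanged or decreased), and that at the escape step the $l_\infty$ gain is not cancelled by a penalty drop. This is handled by the same style of penalty-tree manipulation used in the proofs of Lemmas \ref{L:change1} and \ref{L:reduce}, together with Lemma \ref{lemma:leftcarets} to keep left carets unweighted throughout.
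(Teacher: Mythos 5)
Your proposal takes a genuinely different route from the paper, but it contains a real gap. The paper's strategy is static: it exhibits a fixed finite family of candidate escape words --- the single generators $x_i$ for $0 \leq i \leq 2n$ (of $X_n$-length at most $2n+1$) and the one word $\alpha = x_{2n-1}^{-1}x_{2n-2}^{-1}\cdots x_1^{-1}$ (of $X_n$-length $4n-3$) --- and then shows, by a dichotomy on whether $T_-$ has $2n+1$ right carets all of whose left subtrees $A_1,\ldots,A_{2n+1}$ are nonempty, that for every $g$ at least one member of this family strictly increases $l_n$. Your plan is adaptive: multiply by one generator, examine whether $l_n$ went up, and if not build a follow-up path tailored to the new element. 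These are meaningfully different in spirit, and an adaptive argument could in principle give sharper bounds, but it also has to control a moving target.

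The gap is that the adaptive parts are asserted rather than constructed. In your Case 1, if $l_n(gx_n^{-1}) = l_n(g)-1$ you are now one step deeper inside the ball, and you claim a ``short follow-up sequence of length at most $2n$, built from the patterns appearing in the proofs of Lemmas \ref{L:change1} and \ref{L:reduce}'' will reach length $m+1$. But those lemmas only give local $\pm 1$ control per step; they do not produce a bounded escape word from an arbitrary element, and nothing in your sketch explains why the new element $gx_n^{-1}$ --- whose penalty tree structure you have not analyzed --- admits a $2n$-step escape. Similarly, in Case 2 the ``careful accounting'' showing the setup takes at most $2n-2$ steps ``with some steps shared'' is exactly the content that would need to be proved, and the arithmetic $2n-2 + (2n+1) = 4n-1 > 4n-3$ shows the sharing claim is doing essential work that is left unjustified. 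By contrast, the paper sidesteps all of this: it never moves to an intermediate element, so it only ever needs to compare $l_n(g)$ with $l_n(g\cdot w)$ for $w$ in a fixed list, which it does directly by tracking how $\alpha$ turns the right carets $r_2,\ldots,r_{2n}$ into type-$N$ interior carets (gaining $2n-1$ on $l_\infty$) while the penalty can drop by at most $n-1$. To repair your argument you would need to actually exhibit the follow-up and setup words and verify the length bounds, which in effect would reproduce an analysis at least as involved as the paper's static one.
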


\begin{proof}
We will show that for every $g \in F$, at least one of
$l_n(gx_{i})$, for $0 \leq i \leq 2n$, or $l_n(g\alpha)$, where
$\alpha=x_{2n-1}^{-1}x_{2n-2}^{-1} \cdots x_2^{-1}x_1^{-1}$ is
greater than $l_n(g)$. Since $l_n(x_i) \leq l_n(x_{2n})=2n+1$ for $0
\leq i \leq 2n$, and $l_n(\alpha)=4n-3$, this proves the theorem.

Let $g \in F$ be represented by the reduced pair diagram $(T_-,
T_+)$, and let $r_1 \prec r_2 \prec \cdots \prec r_l$ be the right
carets of $T_-$, and let $A_i$ be the left subtree of $r_i$ for $1
\leq i \leq l$. First observe that, for $1 \leq i \leq 2n$, if $l
<i+1$ or if both $l \geq i+1$ and $A_{i+1} = \emptyset$, then
$l_n(gx_{i}) > l_n(g)$. Thus we need only consider the case that $l
\geq 2n+1$ and $A_{1}, A_{2}, \ldots A_{2n+1}$  are all not empty.

Assume that we are in this case; we will show below that it follows
that $l_n(g\alpha)> l_n(g)$. Note that in this case, the reduced
tree pair diagram for $g\alpha$ is $(T_{\alpha},T_+)$. In
$T_{\alpha}$, carets $r_i$ for $2 \leq i \leq 2n$ are all interior,
whereas they were right carets in $T_-$, so
$l_{\infty}(g\alpha)=l_{\infty}(g)+2n-1$. To compare penalty weight
between $g$ and $g\alpha$, notice that all of the $r_i$ carets which
are interior in $T_{\alpha}$ are type $N$, so they remain penalty
carets for $g\alpha$. The only change is in the caret adjacencies;
the adjacencies $r_i \prec r_{2n+1}$, for $2 \leq i \leq 2n-1$ are
present in $T_{\alpha}$, but not in $T_-$. We claim that $p_n(g)
\leq p_n(g\alpha)+n-1$.

To prove this claim, suppose $\PP$ is a minimal penalty tree for
$g\alpha$. We will construct a penalty tree $\PP'$ for $g$ as
follows.  If $\PP$ contains no edges of the form $r_i \prec
r_{2n+1}$, for $2 \leq i \leq 2n-1$, then $\PP'=\PP$ is a penalty
tree for $g$, so $p_n(g) \leq p_n(g\alpha)$. If $\PP$ does contain
one such edge, it contains only one, say $r_i \prec r_{2n+1}$. Then
alter $\PP$ to form $\PP'$ by deleting the edge, and inserting the
edge $r_{2n } \prec r_{2n+1}$, noting that $r_{2n}$ was already a
vertex on $\PP$, since it is has type $N$ in $T_{\alpha}$. Then
$\PP'$ is a penalty tree for $g$. It is possible that $p_n(\PP') >
p_n(\PP)$, but this possible increase can only be caused by carets
along the path from $v_0$ to $r_{2n}$ which were not weighted in
$\PP$ but become weighted in $\PP'$. However, there can be at most
$n-1$ of these, so $p_n(\PP') \leq p_n(\PP)+n-1$, and therefore
$p_n(g) \leq p_n(g\alpha)+n-1$.  Thus we obtain the necessary
inequality:

\begin{eqnarray}
\nonumber
  l_n(g\alpha) &=& (l_{\infty}(g\alpha)+ 2p_n(g\alpha)) \\
 \nonumber  &\geq& l_{\infty}(g)+ (2n-1) + 2(p_n(g)-n+1) \\
 \nonumber &=& l_n(g)+(2n-1)+2(1-n) \\
 \nonumber  &=& l_n(g) +1
\end{eqnarray}
which proves the theorem.
\end{proof}

\end{document}